\newcommand{\R}{\mathbb{R}}
\newcommand{\N}{\mathbb{N}}
\DeclareMathOperator*{\elimsup}{ess\,lim\,sup}
\DeclareMathOperator*{\diam}{diam}
\DeclareMathOperator{\Mod}{Mod}
\DeclareMathOperator*{\rad}{rad}
\def\vint_#1{\mathchoice%
	{\mathop{\kern 0.2em\vrule width 0.6em height 0.69678ex depth -0.58065ex
			\kern -0.8em \intop}\nolimits_{\kern -0.4em#1}}%
	{\mathop{\kern 0.1em\vrule width 0.5em height 0.69678ex depth -0.60387ex
			\kern -0.6em \intop}\nolimits_{#1}}%
	{\mathop{\kern 0.1em\vrule width 0.5em height 0.69678ex depth -0.60387ex
			\kern -0.6em \intop}\nolimits_{#1}}%
	{\mathop{\kern 0.1em\vrule width 0.5em height 0.69678ex depth -0.60387ex
			\kern -0.6em \intop}\nolimits_{#1}}}
\def\qed{\enspace\null\hfill $\square$\par\medbreak} 
\newcommand{\Om}{\Omega}
\newcommand{\eps}{\varepsilon}
\newcommand{\pip}{\varphi}
\newcommand{\ch}{\text{\raise 1.3pt \hbox{$\chi$}\kern-0.2pt}}
\newcommand*{\calE}{\mathcal{E}}
\providecommand*{\vint}[1]{\mathchoice
          {\mathop{\vrule width 5pt height 3 pt depth -2.5pt
                  \kern -9pt \kern 1pt\intop}\nolimits_{\kern -5pt{#1}}}
          {\mathop{\vrule width 5pt height 3 pt depth -2.6pt
                  \kern -6pt \intop}\nolimits_{\kern -3pt{#1}}}
          {\mathop{\vrule width 5pt height 3 pt depth -2.6pt
                  \kern -6pt \intop}\nolimits_{\kern -3pt{#1}}}
          {\mathop{\vrule width 5pt height 3 pt depth -2.6pt
                  \kern -6pt \intop}\nolimits_{\kern -3pt{#1}}}}
\newcommand*{\jint}{\fint}
\theoremstyle{plain}
\newtheorem{theorem}[equation]{Theorem}
\newtheorem{lemma}[equation]{Lemma}
\newtheorem{corollary}[equation]{Corollary}
\newtheorem{proposition}[equation]{Proposition}
\numberwithin{equation}{section}
\theoremstyle{definition}
\newtheorem{definition}[equation]{Definition}
\newtheorem{example}[equation]{Example}
\theoremstyle{remark}
\newtheorem{remark}[equation]{Remark}
\def\l@subsection{\@tocline{2}{0pt}{2.5pc}{5pc}{}}
\begin{document}

 	\title[ Fractional $p$-Laplacian via Neumann problems ]{ Fractional $p$-Laplacians via Neumann problems in unbounded 
	metric measure spaces}
	
	\author[Capogna, Gibara, Korte, Shanmugalingam]{Luca Capogna, Ryan Gibara, Riikka Korte, Nageswari Shanmugalingam}

	\keywords{Besov-space, Dirichlet problem, doubling metric measure space, fractional $p$-Laplacian, hyperbolic fillings, Neumann problem, well-posedness}
	
	\subjclass[2020]{30L15, 31E05, 35R11, 35B65, 46E35.}
	
	\begin{abstract}
We prove well-posedness, Harnack inequality and sharp regularity of solutions to a fractional $p$-Laplace non-homogeneous 
equation $(-\Delta_p)^su =f$, with $0<s<1$, $1<p<\infty$, 
for data $f$ satisfying
a weighted $L^{p'}$ condition
in a  doubling  metric measure space $(Z,d_Z,\nu)$ that is possibly unbounded. Our approach is inspired by the work of 
Caffarelli and Silvestre \cite{CS} (see also Mol{\v{c}}anov and Ostrovski{\u{i}}~\cite{MO}), 
and  extends the techniques developed in 
 \cite{CKKSS}, where the bounded case is studied. 
Unlike in~\cite{EbGKSS}, we do not assume that $Z$ supports a Poincar\'e inequality.
The proof is based on the well-posedness of the Neumann problem in a uniform metric measure space $(X,d_X, \mu)$ that 
arises as an hyperbolic filling of  $Z$. To the best of our knowledge, the results appear to be new even in the case of an unbounded Euclidean domain.
	\end{abstract}
	
	\date{\today}
	
	\maketitle
	
	\tableofcontents

	\section{Introduction}	

 The recent paper \cite{CKKSS}
 introduced a sweeping generalization 
 of the method of Caffarelli and Silvestre \cite{CS} (see also \cite{MO}) to construct and study fractional powers of the $p$-Laplacian, 
  when $1<p<\infty$, in a bounded doubling metric measure space $(Z, d_Z, \nu)$. 
 The purpose of the present note  is twofold: {\bf (1)} While the pioneering results in \cite{CS, FF, CG}  allow to study nonlocal 
 operators in various model spaces (Euclidean space, Carnot groups or the hyperbolic space), our previous work in \cite{CKKSS} was limited to 
 compact spaces. Here we extend the results of \cite{CKKSS} to the case where the space $Z$ can be
unbounded. {\bf (2)}  
There is a rich, very broad, literature on the fractional $p$-Laplacian in the Euclidean setting, which is not based on the 
Caffarelli-Silvestre paradigm of studying non-local operators as traces of local operators, but rather on the minimization of a 
Besov energy. In Section~\ref{S:examples} we compare  this point of view, and the corresponding results,  with our approach. Section~\ref{S:examples}
also includes several motivations for our work in this paper.

In moving from the compact space case of~\cite{CKKSS} to the unbounded case of the present paper, the key obstacle is
that we loose information about integrability of the functions of interest; unlike in the case of bounded spaces, measurable
functions in unbounded spaces but with globally $p$-inegrable upper gradients need not themselves be globally $p$-integrable.
This destroys our ability to directly use the results of~\cite{CKKSS} as we no longer can apply global estimates, 
and therefore we had to find an alternate way of establishing 
well-posedness of the problem in a more indirect way via approximation methods for the unbounded space itself. In addition to the considerable technical difficulties arising from the unboundedness of the space, we also introduce a novel class of weighted $L^p$-spaces for the Neumann data which appears to be new even in the Euclidean setting.

  Our results in \cite{CKKSS} and in the present paper are motivated by the larger problem of using nonlocal operators,  like the fractional powers of the $p$-Laplacian,
 on a complete unbounded metric measure space $Z$ that is the  boundary  of a uniform domain $\Omega$ to {\it understand the 
 geometry of $\Omega$ itself,} 
  with such nonlocal operators arising as
 traces of $p$-Laplacian operators on the uniform domains. 
 In analogy to the well known Electrical Impedance Tomography problem, where one wants to recover the scalar coefficient $\gamma$  of an elliptic PDE $\text{div}(\gamma \nabla u)=0$ in a domain $\Omega$ from the study of its Dirichlet-to-Neumann map $u|_{\partial \Omega} \to \partial_\nu u|_{\partial \Omega}$ on the boundary $\partial \Omega$, our  goal here is to develop the study of the trace operators on the boundary
 with the long-term aim of inferring the structure of the uniform domains by understanding the behavior of the trace operator
 on the boundary, as the latter is more visible than the uniform domain itself, which could be  largely hidden from our view in concrete applications.
 
  While in this paper the primary object of interest is the uniform domain  $\Omega$ whose structure is hidden by the structure of
 its boundary $Z=\partial \Omega$,
 there is an alternate viewpoint to that described above, namely, that the primary object of study is the 
 space $Z$  itself rather than the domain it is a boundary of. 
 One of the values of this perspective is in its application to the study of quasiconformal geometry on $Z$, for quasiconformal
 maps preserve certain Besov classes and are characterized by quasipreservation of Besov energy, as demonstrated
 for example in~\cite{LS}.
 From this point of view, one can think of
 the fractional $p$-Laplacian studied in this note as arising in connection to a fractional Besov energy defined 
 in a {\it uniformized hyperbolic filling} of the space $Z$. 
 The idea of hyperbolic filling was first proposed by Gromov \cite{Gr} and implemented in various forms
 in \cite{BSak, BSakSou,BSch,BourPaj,BuSc}. Later  in~\cite{BBS} this notion was extended from the metric setting to the metric measure setting. 
 In conjunction  with the
 tools of uniformization from~\cite{BBS,BHK}, this technique yields that every compact doubling metric measure space
 is (up to a bi-Lipschitz change in the metric) the boundary of a uniform domain which is equipped with a doubling measure supporting
 a $1$-Poincar\'e inequality. 
  Connections between quasiconformal geometry of $Z$ and rough quasiisometry of
 hyperbolic fillings have been explored 
 in~\cite{CG, Car, LS} for instance.
 
 At the heart of the arguments in~\cite{CKKSS} is the idea that Besov spaces, also known as fractional 
 Sobolev spaces, Gagliardo spaces or Slobodetski\u{\i} spaces, are the trace classes of Sobolev spaces of functions on a 
 uniform domain whose boundary is the
 metric measure space of interest. Such an idea is well-known in the Euclidean setting, thanks to the work of  Jonsson and Wallin \cite{JW},
 and  Gagliardo \cite{Gag}. In other words,  in \cite{CKKSS}   
 the bounded space $Z$ is seen as the visual boundary of a hyperbolic filling $(X,d_X, \mu)$ 
 (for instance, one can think of the special example where $Z$ is the unit circle and $X$ is the hyperbolic space in the Poincar\'e model).
 An appropriate uniformization procedure (see \cite{BBS}) yields a John domain $\Omega$, conformally equivalent to $X$, and whose 
 boundary $\partial \Omega$ is bi-Lipschitz equivalent to $Z$. In this context,  the fractional power $(- \Delta_p)^\theta$ of the
 $p$-Laplacian $\Delta_p$ in $Z$ is then realized as the Dirichlet-to-Neumann map for $\Omega$ as described in~\cite{GS3}
 (for more details see Section~\ref{sec:construct-fractLap}). 

 From {these two perspectives,}  in the present paper our  approach is to  
  view an unbounded complete doubling metric measure space $(Z,d_Z,\nu)$ as the 
  boundary of an unbounded uniform
 domain equipped with a measure $(\Om, d,\mu)$ \cite{Bu1}, and
 to study the Dirichlet and Neumann problems for the 
 Cheeger $p$-Laplacian in the metric space $(\Omega, d, \mu)$. We then deduce existence, regularity, 
 and stability of the solutions for the induced fractional powers of the $p$-Laplacian
 on $\partial \Omega$ through the corresponding properties for the Neumann problem. 
 
Correspondingly, in this note we will prove well-posedness of the Neumann problem for the Cheeger $p$-Laplacian 
in unbounded open sets of metric measure spaces satisfying the properties listed below.

\noindent{\bf Structure  hypotheses:}
Throughout the paper, we let $1<p<\infty$ and $\Omega$ be a domain in
a complete metric measure space $(X,d,\mu)$ such that:
\begin{enumerate}\label{structure hypotheses}
\item[(H0)] $\Omega$ is a uniform domain as defined in Subsection~\ref{subsect:Uniform}.
\item[(H1)] $(\overline \Om, d, \mu\vert_{\overline \Om})$ is doubling and supports a $p$-Poincar\'e inequality
as in Subsection~\ref{subsect:doubling}.
\item[(H2)]
The boundary $\partial \Omega$ is  complete, uniformly perfect, and
doubling. Moreover, we assume that it  
is  equipped with a Radon measure $\nu$ for which there are constants $C\ge 1$
and $0<\Theta<p$ such that for all $x\in \partial\Omega$
and $0<r<2\diam(\partial \Omega)$,
\begin{equation}\label{eq:Co-Dim-intro}
 \frac{1}{C}\, \frac{\mu(B(x,r)\cap\Om)}{r^\Theta}\le \nu(B(x,r))\le C\, \frac{\mu(B(x,r)\cap\Om)}{r^\Theta};
\end{equation}
that is, $\nu$ is a $\Theta$-codimensional Hausdorff measure with respect to $\mu\lvert_\Om$.
\end{enumerate}

\begin{remark}
The constants associated with $p$, and the (H0), (H1), and (H2) conditions will be referred to as the structural constants.
Observe also that $\mu(\partial\Om)=0$ because of~\eqref{eq:Co-Dim-intro}.
\end{remark}

We will denote by $D^{1,p}(\Om)$ the homogeneous Sobolev space in $\Om$ and by $HB^\theta_{p,p} (\partial \Omega)$ the 
homogeneous Besov
space in $\partial \Om$. Throughout the paper we will let
\begin{equation}\label{exponent}\theta= 1-\frac{\Theta}{p},\end{equation}
and $\theta\in (0,1)$ will denote the fractional exponent for the fractional $p$-Laplacian operators we work with. 

\bigskip

Next, we turn to the Neumann problem and explain its formulation in our non-smooth context: given data $f\in L^{p'}(\partial \Omega, \nu)$ 
with $\int_{\partial\Om} f\, d\nu=0$,
we say that $u\in D^{1,p}(\Omega)$ solves the Neumann problem 
\[
\begin{cases} \Delta_p u =0 & \text{ in } \Omega \\
|\nabla u|^{p-2} \partial_{\vec n} u=f &\text{ on } \partial \Omega
    \end{cases}
\]
if for all $\phi\in D^{1,p}(\overline{\Omega})$,
      \[
      \int_{\Omega}|\nabla u|^{p-2}\nabla u\cdot \nabla\phi\,d\mu=\int_{\partial\Omega}\phi f\,d\nu
      \]
(see Theorem \ref{thm:equiv-unbounded} for an equivalent definition). In the Euclidean setting, $\nabla u$ is the weak 
derivative of $u$, while in the nonsmooth setting of  doubling metric measure spaces supporting a Poincar\'e inequality it
denotes any choice of a Cheeger differential.

One of our main result is the existence of solutions to the Neumann problem, and the continuity of such solutions with respect to the data.

\begin{theorem}\label{intro-thm:Neumann}
Assume hypotheses (H0), (H1) and (H2) above, and consider data 
\begin{equation}\label{intro-weight-h}
f\in L^{p'}(\partial \Omega,  \nu_J)\cap L^{p'}(\partial\Om,\, \nu),
\end{equation}
with $\int_{\partial \Omega} f d\nu=0$. Here, for a fixed $x_0\in\partial\Omega$, we have set
 \begin{equation}\label{intro-weight}
 J(x,x_0) =  d(x,x_0)^{p^\prime\theta }\, \nu(B(x_0,d(x_0,x)))^{p^\prime/p}
 \end{equation}
 and the measure $\nu_J$ is defined by $\nu_J(E)=\int_E J(y,x_0)d\nu(y)$ 
 whenever $E\subset \partial \Omega$ is a $\nu$-measurable set.
The following properties hold:
\begin{enumerate}
\item { (Existence and uniqueness) } There exists a solution $u\in D^{1,p}(\Om)$ of the 
Neumann problem with data $f$. If $w$ is another solution, then 
$w-u$ is a constant.

\item (Stability with respect to data) 
There exists $C>0$, depending only on the structural constants and on $x_0$, such that for any 
$g\in L^{p'}(\partial \Omega, \nu_J)\cap L^{p'}(\partial\Om,\, \nu)$ with $\int_{\partial \Om} g d\nu=0$, and 
denoting by $v\in D^{1,p}(\Om)$ a solution of the Neumann problem with data $g$, one has the following: If $p\geq 2$, then
\begin{equation}
\|\nabla u-\nabla v\|_{L^p(\Om,\, \mu)}\le C\, \|f-g\|_{L^{p'}(\partial\Om,\, \nu_J)}^{1/(p-1)},
\end{equation}
whereas, if $1<p< 2$, then 
    \begin{multline}
        \| \nabla u - \nabla v\|_{L^p(\Om, \mu)} \\ \le C (\| f\|_{L^{p'}(\partial \Omega, {\nu_J})}
         + \| g\|_{L^{p'}(\partial \Omega, {\nu_J})})^{\frac{2-p}{p-1}} \| f-g\|_{L^{p'}(\partial \Omega, {\nu_J})}
    \end{multline}
\end{enumerate}
\end{theorem}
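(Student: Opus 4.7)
The plan is to produce $u$ as the (essentially unique) minimizer over $D^{1,p}(\Omega)$ of the variational energy
\[
E(v) = \frac{1}{p}\int_\Omega |\nabla v|^p\, d\mu - \int_{\partial\Omega} v f \, d\nu,
\]
and to read off both existence and stability from this picture. The central preliminary is to show that the boundary functional $L(v) = \int_{\partial\Omega} v f\, d\nu$ is well-defined and continuous on the quotient $D^{1,p}(\Omega)/\mathbb{R}$. H\"older's inequality against the weight $J$ gives
\[
\Bigl|\int_{\partial\Omega} v f\, d\nu\Bigr|^p \le \|f\|_{L^{p'}(\partial\Omega,\nu_J)}^p \int_{\partial\Omega} \frac{|v(y)|^p\, d\nu(y)}{d(y,x_0)^{\theta p}\,\nu(B(x_0,d(x_0,y)))},
\]
since the direct computation $J(y,x_0)^{1-p} = d(y,x_0)^{-\theta p}\,\nu(B(x_0,d(x_0,y)))^{-1}$ uses precisely $\theta = 1-\Theta/p$ from \eqref{exponent}. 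I would then bound the right-hand integral by $\|v\|_{HB^\theta_{p,p}(\partial\Omega)}^p$ via a Hardy-type inequality on $\partial\Omega$ anchored at $x_0$, and dominate the Besov seminorm by $\|\nabla v\|_{L^p(\Omega,\mu)}$ through the homogeneous trace theorem in the hyperbolic filling/uniformization machinery. The zero-mean condition on $f$ then lets $L$ descend to $D^{1,p}(\Omega)/\mathbb{R}$.

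With continuity of $L$ in hand, the direct method applies: a minimizing sequence for $E$, renormalized by subtracting its $\nu$-average on a fixed boundary ball, is bounded in the quotient by the coercivity estimate $\frac{1}{p}\|\nabla v\|_{L^p}^p - C\|\nabla v\|_{L^p}$; weak compactness together with weak lower semicontinuity of $v\mapsto \int_\Omega |\nabla v|^p\, d\mu$ yields a minimizer whose Euler--Lagrange identity is exactly the weak Neumann formulation. Uniqueness up to additive constants follows from strict convexity of the $p$-energy on the quotient. For stability, subtract the weak formulations for $u$ and $v$ and test against $\phi = u-v$ (admissible by a standard density argument), obtaining
\[
\int_\Omega \bigl(|\nabla u|^{p-2}\nabla u - |\nabla v|^{p-2}\nabla v\bigr)\cdot \nabla(u-v)\, d\mu = \int_{\partial\Omega}(u-v)(f-g)\, d\nu.
\]
The right side is controlled by the continuity of $L$ applied to $f-g$, while the left side is bounded below by the classical vector monotonicity inequalities for the $p$-Laplacian: by $c\,\|\nabla(u-v)\|_{L^p}^p$ when $p\ge 2$, yielding the first stability estimate directly, and by $c\,\|\nabla(u-v)\|_{L^p}^2(\|\nabla u\|_{L^p}+\|\nabla v\|_{L^p})^{p-2}$ via a H\"older interpolation when $1<p<2$. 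In the subquadratic case one combines this with the a priori bound $\|\nabla u\|_{L^p(\Omega,\mu)}\le C\|f\|_{L^{p'}(\partial\Omega,\nu_J)}^{1/(p-1)}$ (obtained by testing the equation against $u$ itself) and the analogous bound for $v$, recovering the stated estimate modulo the standard equivalence $a^{1/(p-1)}+b^{1/(p-1)}\approx (a+b)^{1/(p-1)}$.

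The main technical obstacle is the continuity of $L$ in this unbounded setting: traces of $D^{1,p}$-functions on an unbounded boundary need not lie in $L^{p'}(\partial\Omega,\nu)$, so one cannot invoke a plain $L^{p'}$--$L^p$ trace duality as in the bounded case of \cite{CKKSS}. Everything hinges on the Hardy--Besov inequality anchored at $x_0$, which is precisely what identifies $L^{p'}(\partial\Omega,\nu_J)$ as a subspace of the dual of the homogeneous trace space of $D^{1,p}(\Omega)$. Once this homogeneous duality is set up, the remaining arguments---direct method, monotonicity, and the subquadratic H\"older juggling---follow familiar lines.
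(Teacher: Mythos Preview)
Your stability and uniqueness arguments are essentially the same as the paper's (Sections~6--7 and Proposition~5.1): subtract the weak formulations, test with $u-v$, apply the vector monotonicity inequalities, and in the subquadratic case combine with the a~priori bound from testing against the solution itself. No issue there.

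There are, however, two substantive differences earlier in your plan.

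\textbf{Continuity of $L$.} Your Hardy-type inequality, as written, is false: if $v\equiv 1$ the Besov seminorm vanishes but $\int_{\partial\Omega} d(y,x_0)^{-\theta p}\nu(B(x_0,d(x_0,y)))^{-1}\,d\nu(y)$ need not. The zero-mean condition on $f$ must be used \emph{before} H\"older (to replace $v$ by $v-c$), not afterwards to ``descend to the quotient''. Even with that fix, the pointwise fractional Hardy inequality $\int |v-c|^p J(\cdot,x_0)^{1-p}\,d\nu \lesssim \|v\|_{HB^\theta_{p,p}}^p$ is not standard on a bare doubling space. The paper sidesteps this entirely: Lemma~\ref{panacea-bis} uses $\int f\,d\nu=0$ to write $\int fv = \int f(v(\cdot)-v(y))$ for each $y$, applies H\"older against the $y$-slice of the Besov integrand, and then \emph{averages over $y$ in a ball} to recover the full double-integral Besov norm. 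This gives the same bound you want without any Hardy inequality.

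\textbf{Existence.} Here your route is genuinely different. The paper explicitly says a direct variational attack ``appears difficult'' because of the lack of compactness in $\overline{\Omega}$, and instead proceeds in three stages: (i)~solve for compactly supported $f$ by the $\pip$-transformation of Section~\ref{previous work}, which collapses $\Omega$ to a \emph{bounded} uniform domain where the results of~\cite{MaSh} apply (Proposition~\ref{prop:compact-Neumann}); (ii)~approximate general $f$ by compactly supported $f_k$ as in~\eqref{eq:fk-def1}; (iii)~show via Lemma~\ref{panacea-bis} and monotonicity that the solutions $u_k$ have $\nabla u_k$ Cauchy in $L^p(\Omega)$, then patch together the limit function locally (Proposition~\ref{lemma:convergence}). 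Your direct-method approach can be made to work, but it is not free: you need to know that $D^{1,p}(\Omega)/\mathbb{R}$ is a reflexive Banach space---equivalently, that the range of $\nabla$ is closed in $L^p(\Omega;\mathbb{R}^N)$---so that a bounded minimizing sequence has a weak limit which is itself a gradient. The paper establishes exactly this closedness by hand in the proof of Proposition~\ref{lemma:convergence} (the exhaustion-by-balls construction of $u$ from $\Phi$), so in effect the same work is required either way; the paper's approximation route just makes it more explicit and yields strong convergence of gradients as a byproduct.
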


\begin{remark}
    Note that the choice of the point $x_0$ in \eqref{intro-weight} affects only the constant in the stability estimates.
\end{remark}

 A direct proof that there is a solution to the Neumann problem, as defined above and in 
Subsection~\ref{sub:Def-Neumann},
appears difficult; thus the equivalence between this problem and the equivalent variational interpretation as given in 
Theorem~\ref{thm:equiv-unbounded} below,
is quite useful. Indeed, the existence of solution to the minimization problem can be shown via the direct methods of the calculus of variation.
We overcome the lack of compactness of $\overline{\Om}$ by first considering compactly supported Neumann data, and then
suitably approximating the general Neumann data by compactly supported Neumann data. One of the main technical results 
we use to overcome the apparent lack of compactness is Lemma \ref{panacea-bis}.

\begin{remark}
Although the Euclidean Neumann problem for the  quasilinear PDE in bounded and  unbounded domains has been the subject of several studies, we are not aware of any result in the Euclidean setting that mirrors the well-posedness in Theorem \ref{intro-thm:Neumann}.

Interestingly, the paper~\cite{CNO} considers the Neumann boundary value problem for the Laplace operator ($p=2$ case)
on an unbounded
domain in $\R^2$, with unbounded Lipschitz boundary. Such a boundary is biLipschitz to $\R$, but not necessarily isometric to
$\R$. Should this boundary be isometric to $\R$, we know from~\cite{CS} that the trace of the solution to the Neumann boundary value
problem is the Besov energy minimizer. Without isometry, we do not know that the trace of the solutions obtained in~\cite{CNO}
are Besov energy minimizers, but they are solutions to the trace operator as constructed in the present paper. 
For the more general conformally hyperbolic manifold setting, we refer the 
interested reader to~\cite{CG}.

\end{remark}  
If the data satisfies better integrability conditions, then one can expect H\"older regularity up to the boundary for the solutions.
In the following, we will indicate by $Q_\mu$ the {\it lower mass bound} exponent, defined in the same fashion 
as~\eqref{eq:lower-mass-exp}. Notice that~\eqref{eq:lower-mass-exp} continues to hold even as one  increases arbitrarily the
value of $Q_\mu$ in the lower bound. Hence, if one is willing to pay the price of changing the estimates 
in the H\"older regularity portion of the following
theorem, the assumption $p\le Q_\mu$ should not be considered to be a restrictive one. Also recall that in the range 
$p>Q_\mu$  one already knows that $u$ is $1-Q_\mu/p$-H\"older continuous on
$\overline{\Omega}$ in view of  the
Morrey embedding theorem, see~\cite[Lemma~9.2.12]{HKST} for instance.

\begin{theorem}\label{intro-thm:Neumann-regularity}
Assume hypotheses (H0), (H1) and (H2) above, and let $f$ be as in Theorem \ref{intro-thm:Neumann}.
The following properties hold:
\begin{enumerate}
\item { (Local H\"older regularity at the boundary)} Assume that $1<p\le Q_\mu$. Denote by $B_{R_0}\subset \overline \Omega$  
the intersection of $\overline \Omega$ with a ball of radius $R_0>0$ centered at a point in $\partial \Omega$. If the  boundary data 
satisfies the additional integrability assumption $f\in L^q(B_{2R_0}\cap \partial \Omega, d\nu)$ for some $q$ with 
\[
{q_0:=}\tfrac{Q_\mu-\Theta}{p-\Theta}<q{\le \infty},
\]
then any 
solution of the Neumann problem $u$ is {$\eps$}-H\"{o}lder continuous in $B_{R_0}$ with
\[
\eps=\min\left\{\beta_0, {\left(1-\frac{\Theta}{p}\right)\left(1-\frac{q_0}{q}\right)}\right\},
\]
where $\beta_0>0$ is the H\"older exponent for the interior regularity estimates 
established in~\cite[Theorem~5.2]{KiSh}. 
 More specifically, one has the estimate
\begin{equation}
\sup_{x,y\in B_{R_0},\ x\ne y}\frac{|u(x)-u(y)|}{d(x,y)^\eps}\le C,
\end{equation}
where $C>0$ is a constant depending on the structure constants, on the choice of $q$, 
on $\|f\|_{L^q(B_{2R_0}\cap \partial \Om)}$ and on $R_0$.

\item { (Harnack inequality)} There is a constant
$C>1$ depending only on the structure constants, such that if $u\ge 0$ on ${\partial\Om}$ and
$W\subset\partial\Om$ is a non-empty relatively open subset of $\partial\Om$ with $f=0$ on $W$, then
whenever $x\in\Om\cup W$ and $r>0$ such that $B(x,2r)\cap\overline{\Om}\subset W\cup\Om$, we have
the Harnack inequality
\[
 \sup_{B(x,r)\cap\overline{\Om}}\, u\le C\, \inf_{B(x,r)\cap\overline{\Om}}\, u.
\]
\end{enumerate}
\end{theorem}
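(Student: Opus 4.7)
The plan is to handle the two parts separately, each leveraging interior regularity for the Cheeger $p$-Laplacian from~\cite[Theorem~5.2]{KiSh} together with the observation that, by~(H1), the ambient space $(\overline{\Om}, d, \mu|_{\overline{\Om}})$ is itself a complete doubling metric measure space supporting a $p$-Poincar\'e inequality; the standard metric-space regularity theory for the $p$-Laplacian therefore applies directly to it.

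For the Harnack inequality in~(2), I would first observe that the equivalent variational formulation provided by Theorem~\ref{thm:equiv-unbounded} characterizes solutions of the Neumann problem as critical points of a $p$-energy on $\Om$ in which the data $f$ enters only as a boundary forcing against test functions. When $f=0$ on the relatively open set $W\subset\partial\Om$, this characterization persists for all test functions supported in $W\cup\Om$, so $u$ is a $p$-energy minimizer (equivalently, $p$-harmonic) in the open subset $\Om\cup W$ of $\overline{\Om}$. Since $B(x,2r)\cap\overline{\Om}\subset\Om\cup W$, the interior Harnack inequality of~\cite[Theorem~5.2]{KiSh}, applied in the doubling $p$-Poincar\'e space $(\overline{\Om},d,\mu|_{\overline{\Om}})$, then yields the conclusion with constants depending only on the structural constants.

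For the boundary H\"older regularity in~(1), the strategy is to derive a Campanato-type decay of the gradient energy on boundary balls and then invoke the Morrey embedding in doubling $p$-Poincar\'e spaces. Fix $x_1\in\partial\Om$ and $r\le R_0$; let $v$ be the $p$-harmonic replacement of $u$ in $B(x_1,2r)\cap\overline{\Om}$ taken in the ambient metric measure space, and set $w=u-v$. Then $w$ vanishes on the relative boundary of $B(x_1,2r)\cap\overline{\Om}$ inside $\Om$ and inherits the Neumann data $f$ on $B(x_1,2r)\cap\partial\Om$. Testing this equation with $w$ itself, and combining the monotonicity of the Cheeger $p$-Laplacian with a local trace inequality of the form
\[
\|w\|_{L^{q'}(B(x_1,2r)\cap\partial\Om,\,\nu)}\le C\,r^{\alpha}\,\|\nabla w\|_{L^p(B(x_1,2r)\cap\Om,\,\mu)},
\]
whose scaling exponent $\alpha=\alpha(p,\Theta,Q_\mu,q)$ is dictated by~\eqref{eq:Co-Dim-intro} and the Besov/Sobolev trace theory, should produce the polynomial decay
\[
\int_{B(x_1,r)\cap\Om}|\nabla w|^p\,d\mu \le C\,r^{Q_\mu - p + p\eps_1},\qquad \eps_1=\Bigl(1-\tfrac{\Theta}{p}\Bigr)\Bigl(1-\tfrac{q_0}{q}\Bigr).
\]
The critical threshold $q_0=(Q_\mu-\Theta)/(p-\Theta)$ emerges precisely as the integrability below which the scalings of the two sides no longer match to give a positive exponent. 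For the $p$-harmonic part $v$, the interior regularity of~\cite[Theorem~5.2]{KiSh} gives a matching decay with exponent $\beta_0$, and summing the two contributions yields a Campanato decay for $|\nabla u|^p$ governed by $\eps:=\min\{\beta_0,\eps_1\}$. Iterating on dyadic boundary balls and applying the Campanato/Morrey characterization of H\"older functions in doubling $p$-Poincar\'e spaces (see e.g.~\cite[Ch.~9]{HKST}) then gives the pointwise $\eps$-H\"older estimate on $B_{R_0}$.

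\emph{Main obstacle.} The hardest step will be establishing the localized trace inequality above with a scaling exponent sharp enough to deliver the announced $\eps_1$. While~\cite{CKKSS} provides the corresponding global Besov trace in the bounded case, here one must localize to boundary balls in an unbounded uniform domain and carefully track the constants under rescaling, using only the codimension condition~\eqref{eq:Co-Dim-intro}. Once this is secured, the iteration scheme and the final Morrey embedding follow the standard metric-space template.
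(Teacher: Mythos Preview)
The paper does not give a self-contained proof here; Remark~\ref{regularity} simply observes that both statements follow from purely local arguments, identical to those already carried out for bounded domains in~\cite{CKKSS}. Your outline is consistent with this and sketches what such a local argument looks like.

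For part~(2), your reduction is correct but incomplete: once you recognize that $u$ is Cheeger $p$-harmonic in the open set $\Om\cup W\subset\overline{\Om}$, applying the interior Harnack inequality from~\cite{KiSh} (Corollary~7.3 there, not Theorem~5.2) still requires $u\ge 0$ on $B(x,2r)\cap\overline{\Om}$, whereas the hypothesis only gives $u\ge 0$ on $\partial\Om$. You must first invoke the minimum principle for $p$-harmonic functions (e.g.~\cite[Corollary~6.4]{KiSh}), applied after the compactification of Subsection~\ref{previous work} to handle the unbounded domain, in order to propagate nonnegativity into $\Om$. For part~(1), your Campanato/harmonic-replacement scheme is a standard route and plausibly matches~\cite{CKKSS}; however, saying that $w=u-v$ ``inherits the Neumann data $f$'' is imprecise in the nonlinear setting. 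What one actually does is subtract the weak formulations for $u$ and for $v$ and test with $w$, obtaining
\[
\int_{B\cap\Om}\bigl(|\nabla u|^{p-2}\nabla u-|\nabla v|^{p-2}\nabla v\bigr)\cdot\nabla w\,d\mu=\int_{B\cap\partial\Om}f\,w\,d\nu,
\]
after which monotonicity bounds $\int|\nabla w|^p$. Your identification of the localized trace inequality as the crux is accurate.
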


\begin{remark}\label{regularity} The proof of existence and stability
of solutions  to the Neumann problem represents the bulk of this paper, and it is the main  contribution of our present work.
Note that the Harnack inequality and the H\"older continuity up to the boundary follow from local arguments, 
and as such their proof is the same as the proof of the analogous 
properties for bounded domains in~\cite{CKKSS}.  
We also remark that in the setting of the Euclidean metric, with Lebesgue measure,  the range for the exponent $q$ 
coincides with the range found by Caffarelli and Stinga in~\cite{CSt} where they studied the case $p=2$. See 
also \cite{CabSire1, CabSire2} for related results. For the optimality of this higher integrability exponent see subsection~\ref{sub:Holder}.
\end{remark}

\begin{remark}\label{regularity1}
 In regions where $f$ is non-negative, 
the results of~\cite{Mak} yield a better H\"older exponent for solutions than the one in Theorem \ref{intro-thm:Neumann-regularity},
namely,
\[
\epsilon=\min\bigg\lbrace \beta_0,\, \left(1-\frac{q_0}{q}\right)\, \left(\frac{p-\Theta}{p-1}\right)\bigg\rbrace.
\]
 This exponent is also optimal, given the hypothesis on $f$.
  The argument in \cite{Mak}  relies on Caccioppoli-type estimates
which are not available in regions where $f$ changes sign. In the Euclidean setting, the results in~\cite{ono} lead to 
the same (optimal) H\"older exponent without any restrictions on the sign of $f$. Recently we were able to extend the results of \cite{ono} to our more general setting, yielding the same optimal exponent. This work will appear in a separate paper. 
 See subsection~\ref{sub:Holder} for more details on this.
\end{remark}

\begin{remark}  We note that the problem of the optimal regularity of weak solutions for zero right-hand side, which is arguably one of  the most interesting regularity question related to $p-$Laplacian-like equations, is not addressed in this paper. The reason is that, unless additional curvature-type hypotheses are imposed, Lipschitz regularity fails even in the case $p=2$ (see \cite{KRS}).
\end{remark}

\bigskip

Next we turn to the study of fractional powers of the $p$-Laplacian, $(-\Delta_p)^\theta$, for $1<p<\infty$ on a 
doubling metric measure space $(Z, d, \nu)$. 
In the Euclidean setting\footnote{ Two excellent introductory papers for the Euclidean setting are \cite{Caff} and \cite{hitchicker}. See also \cite{FRRO} for a more exhaustive list of references and an historical perspective.
}  
this problem has been framed in a variational setting, and studied by many authors 
(see for instance \cite{AMRT, BBK, BLS, CKP, dTGCV, Gar, IMS,KKL, IN} and the references therein).
In these papers, weak solutions of the nonlocal homogeneous PDE $(-\Delta_p)^\theta u=0$ arise as minimizers of the homogeneous Besov norm 
$\Vert u\Vert_{\theta,p}^p:=\calE_{p,\theta}(u,u)$,  where
 \[
 \calE_{p,\theta}(u,v)=\int_{\R^n}\int_{\R^n}\frac{|u(y)-u(x)|^{p-2}(u(y)-u(x))(v(y)-v(x))}{\nu(B(x,d(x,y))\, d(x,y)^{\theta p}}\, dy\, dx.
 \]
Note that $\calE_{2,\theta}$ is a Dirichlet form in the sense of~\cite{FOT}. Several authors have 
extended this approach to non-Euclidean settings, see for instance \cite{BGMN, CG, FF, Gar, GT,PaPi} and 
the references therein.  However, this approach does not allow to use the Caffarelli-Silvestre method of studying 
the nonlocal operator via the Dirichlet-to-Neumann map for a local operator defined on a larger space. A remarkable exception is the paper \cite{SiVa}, where the authors use an approach similar to ours to define and study fractional powers of a large class of operators, including the Euclidean $p$-Laplacian in $\R^n$ and study rigidity properties arising from overdetermined problems. See also the  recent work \cite{GaVa} for more connections with rigidity properties. 
In \cite{CKKSS} and in the present paper, we study a related class of fractional $p$-Laplacians, 
corresponding to a Besov energy comparable to $\calE_{p,\theta}$, and for which the Caffarelli-Silvestre 
approach can be used. We will say more later in Section \ref{S:examples} about the relation between these 
two notions of fractional $p$-Laplacian. For now we just mention that in the case $p=2$, and in the Euclidean 
setting, our construction gives rise to the same operators  studied in \cite{CS} and in the references above.

We start by invoking a powerful uniformization result 
\cite{BBS} (see also Butler's work \cite{Bu1,Bu2} for  
non-compact versions of this result): 
given parameters $1<p<\infty$ and $0<\theta<1$,
every  doubling metric measure space $(Z,d_Z,\nu)$ arises as the boundary of a uniform domain $\Om$ that is equipped
with a measure $\mu$ so that the metric measure space $X=\overline{\Om}=\Om\cup Z$, together with
$Z=\partial\Om$, satisfies conditions~(H0),~(H1) and~(H2), with  $\Theta=p(1-\theta)$. The metric on $\partial \Omega$ 
is induced by the metric on $\Omega$ and while it may not coincide with the original metric $d_Z$ on $Z$, it is in the same bi-Lipschitz class. 

After choosing a Cheeger differential structure $\nabla$ on $\Om$, 
we show that 
for each  function $u$ in the homogeneous Besov space $HB^\theta_{p,p}(Z)$, one can find $\widehat{u}$, the unique 
Cheeger $p$-harmonic function in $D^{1,p}(\Om)$ such that
$\widehat{u}$ has  
trace $Tr(\widehat{u})=u$ $\nu$-almost everywhere on $Z$.

Following the strategy in \cite{CKKSS} we next
show that $\int_\Om|\nabla \widehat{u}|^p\, d\mu\approx \Vert u\Vert_{\theta,p}^p$ (see
Section~\ref{sec:construct-fractLap} below). We then set
\begin{equation}\label{eq:ET-go-home}
\mathcal{E}_T(u,v):=\int_\Om|\nabla\widehat{u}|^{p-2}\nabla\widehat{u}\cdot\nabla \widehat{v}\, d\mu.
\end{equation}

A function $u\in HB^\theta_{p,p}(Z)$ is in the domain of the fractional $p$-Laplacian operator
$(-\Delta_p)^{\theta}$ if there is a function $f\in L^{p'}(Z, \nu_J)$ such that the integral identity
\[
\mathcal{E}_T(u,\pip)=\int_Z\pip\, f\, d\nu
\]
holds for every  $\pip\in HB^\theta_{p,p}(Z)$. 
We then denote
\[
(-\Delta_p)^{\theta} u=f\in L^{p'}(Z,  \nu_J).
\]

\begin{remark} Note that if $f=0$, then this PDE is the Euler-Lagrange equation satisfied by critical points of the energy 
functional $\mathcal{E}_T(u,u)$. Since this energy is proved to be equivalent to the homogeneous Besov norm, then the 
minimizers of one energy are global quasi-minimizers of the other.
\end{remark}

\begin{remark} 
Our construction is not intrinsic to the metric and the measure structure of the doubling space $(Z,d, \nu)$. 
{ Instead, the construction is intrinsic to the metric measure structure of the domain $\Om$
that $Z$ is the boundary of. As shown by the hyperbolic filling constructions, it is possible for $Z$ to be the boundary
of more than one such domain $\Om$. Once the nonlocal trace operator has been constructed on $Z$, it becomes
possible to explore  how the geometries of  different domains $\Om$ affect the corresponding trace operators on $Z$.}
 In the case
 $\Om$ is the uniformization of a hyperbolic filling as considered in~\cite{BBS},
 there is a natural class of Cheeger differential structures that all give rise to the same fractional $p$-Laplacian operator on $Z$.
Such differential structures make use of the fact that $\Om$ is made up mostly of line-segments equipped with weights, and 
the choice of orientation on each line segment does not affect the outcome.
\end{remark}

\begin{remark}
 In the special case in which $p=2$ and $(Z,d,\nu)$ admits a Poincar\'e inequality, the set $\Omega$ can be constructed 
 through the product $Z\times \R$. As proved in \cite{CKKSS}, our construction gives rise to the same fractional operators 
 as in \cite{EbGKSS}. Since the latter generalizes the result of Caffarelli-Silvestre, then so does our construction.
\end{remark}

We summarize our results for the fractional $p$-Laplacian in the following theorem. In the statement, $Q_\nu$ 
denotes the lower mass bound exponent~\eqref{eq:lower-mass-exp} for the 
doubling measure $\nu$ on $Z$, and  $p'=p/(p-1)$ is the H\"older conjugate of $p$.

\begin{theorem}\label{thm:main-fract-Lap-intro}
Suppose that $(Z,d_Z,\nu)$ is a complete, uniformly perfect, unbounded doubling metric measure space,
and $0<\theta<1$. Then
the
form $\mathcal{E}_T$ 
given by~\eqref{eq:ET-go-home} satisfies
$\mathcal{E}_T(u,u)\approx\mathcal{E}_{p,\theta}(u,u)$ for each $u\in HB^\theta_{p,p}(Z)$
with the comparison constant depending solely on the doubling constant of $\nu$ and the indices $p,\theta$. If we denote by
$(-\Delta_p)^\theta$ the fractional $p$-Laplacian associated to the form $\mathcal{E}_T(u,u)$, then the following hold. 
\begin{enumerate}
\item[{\rm (i)}] For each $f\in L^{p'}(Z,\nu)\cap L^{p'}(Z, \nu_J)$ with $\int_Zf\, d\nu=0$ there exists a function $u_f\in HB^\theta_{p,p}(Z)$ such that
$$(-\Delta_p)^\theta u_f=f$$ on $Z$. If $\widetilde{u_f}$ is any other such function,
then $u_f-\widetilde{u_f}$ is constant $\nu$-a.e.~in $Z$. If, in addition, $f\in L^q(Z)$ for some
$q>\max\{1, Q_\nu/\theta\}$, then $u_f$ is H\"older continuous on $Z$
{with the H\"older exponent depending solely on $p$, $\theta$, $q$, and the doubling constant of $\nu$}. 
\item[{\rm (ii)}] There exists a constant $C>0$, depending only on the structural constants, such that if 
$f_1,f_2\in L^{p'}(Z, \nu)\cap L^{p'}(Z, \nu_J)$ with
$\int_{Z}f_1\, d\nu=0=\int_{Z}f_2\, d\nu$
and $u_{f_1}$, $u_{f_2}$ are the functions in $HB^\theta_{p,p}(Z)$ corresponding to $f_1$, $f_2$ as above, then 
\begin{multline}\label{eq:control-stable}
\Vert  u_{f_1}- u_{f_2}\Vert_{HB^\theta_{p,p}(Z)}\\ \le C\, \max\{\Vert f_1\Vert_{L^{p'}(Z,\nu_J)},\Vert f_2\Vert_{L^{p'}(Z,\nu_J)}\}^\kappa\ 
\times \\ \times 
\Vert f_1-f_2\Vert_{L^{p'}(Z,\nu_J)}^\tau,
\end{multline}
with $\kappa=0$, $\tau=1/(p-1)$ when $p\ge 2$ and $\kappa=(2-p)/(p-1)$, $\tau=1$ when $1<p<2$.
\item[{\rm (iii)}] Let $W\subset Z$ be an open (nonempty) subset such that $f=0$ on $W$.
There exists a constant $C>0$, depending only on the 
structural constants, such that if $u$ is a  solution of $(-\Delta_p)^\theta u=f$ in $Z$ with $u\ge 0$ on $Z$, 
then $u$ satisfies the Harnack inequality
$ \sup_B u \le C \inf_B u$ for all balls $B=B_R$ such that  $B_{4R}\subset W$. 
Note that by assuming that $\int_Zf\, d\nu=0$ we implicitly also assume that $f\in L^{1}(Z,\nu)$.
\end{enumerate}
\end{theorem}

 This is a generalization of the version of this theorem for bounded $Z$ which is  proved in \cite{CKKSS}.  See 
 Section~\ref{S:examples} for a study of the relation between these results, and those in the Euclidean literature.

\begin{remark} 
It is an interesting open problem whether the solutions still exist if the integrability of the data is relaxed to 
$f\in L^{p'}(\partial\Om, \nu)$. 
\end{remark}

\begin{remark}
In the setting of Euclidean spaces, some aspects of Theorem~\ref{thm:main-fract-Lap-intro} can be found in currently extant literature
for Besov energy minimizers; these are not known to arise as trace operators except in the case that $p=2$. The papers discussed
in this remark are only a small but representative sampling of available literature on the subject; we cannot hope to give an exhaustive
list of papers on the topic here.

Existence of Besov energy minimizers in bounded domains in $\R^n$ was established in~\cite{IN} when the inhomogeneity data
$f$ is continuous on the closure of the domain and the Dirichlet boundary data is continuous on the entire boundary of the domain.
Local regularity properties of Besov energy
minimizers were established in~\cite{IMS} only for bounded domains in $\R^n$ with smooth boundary and bounded inhomogeneity data 
$f$ on the boundary of the domain. The existence result was extended to more general bounded Lipschitz domains in $\R^n$
with more general inhomogeneity data and Dirichlet boundary data
in~\cite{GW}.
In considering the Besov energy minimization problem in the domain, \cite{IMS} imposes zero Dirichlet data on the boundary (or more
specifically, in the complement) of the domain. A weak version of~(ii) of Theorem~\ref{thm:main-fract-Lap-intro} was established in the 
setting of Euclidean domains in~\cite[Section~3]{IN}, where quantitative controls as in~\eqref{eq:control-stable} were not established;
however, the weak stability property established in~\cite{IN} sufficed to develop a Perron-type solution method, and~\cite{IN} then
continued on to establish existence of solutions for more general inhomogeneity data and boundary data provided they are 
resolutive.

Thus, to the best of our knowledge,
existence and uniqueness of inhomogeneous Besov energy minimizers on entire $\R^n$ seems to have not been established
in currently extant literature, nor the quantitative stability~\eqref{eq:control-stable} seems to be proven. For domains in
a compact doubling metric measure space, the Dirichlet boundary value problem associated with the non-local trace operator
was explored in~\cite{KLS}, where an extension of this stability was established as well.
\end{remark}

\begin{remark}
We have so far compared our results to the extant literature in PDEs. We point out here that the theory of nonlocal energies predates
the work of Caffarelli and Silvestre~\cite{CS}, and has a longer established history in the field of probability. We are grateful to
Mathav Murugan for pointing out the work of Molchanov and Ostrovskii~\cite{MO}. In the context of probability and Dirichlet forms~\cite{FOT},
the ($p=2$) nonlocal energies are associated with jump processes, and have been extensively studied, see for
a sampling the papers by Chen, Kumagai, Kim, and Wang~\cite{CK1, CK2, CKK, CKW}. The paper~\cite{CKW2} considers a large
class of jump processes for which the properties of local H\"older continuity and Harnack inequality properties of solutions compare
with the properties of associated heat kernels.
\end{remark}

\medskip
	
{\bf Acknowledgments.} The research of N.S. is partially supported by NSF grants~\#DMS-2054960 and~\#DMS-2348748.
 The research of L.C. is partly funded by NSF~\#DMS-195599 and \#DMS-2348806. The research of R.K. is supported by the Research Council of Finland grant 360184. 
Part of the research in this paper was conducted when the authors were at the BIRS 
workship \emph{Smooth Functions on Rough Spaces and Fractals with Connections to 
Curvature Functional Inequalities} in Banff, Alberta. The authors wish to thank that august institution for
its kind hospitality and for the availability of meeting rooms with blackboards, and to Mathav Murugan who
kindly informed us of the reference~\cite{MO} there. The first named author gave a series of lectures based upon the results
in this paper at the 13th School on Analysis and Geometry in Metric Spaces at the University of Trento, in July 2024. Some of the  
questions raised by the audience have informed the discussion in
Section~\ref{S:examples}. In particular, the  authors are  grateful to Andrea Pinamonti for helpful conversations. Finally, we wish 
to thank Yannick Sire and Enrico Valdinoci for sharing insightful feedback and references.

\section{Strategy of the proof}\label{Sec:strategy}

The extension of the results in \cite{CKKSS} to the non compact setting is non-trivial and  as the proofs are rather technical, 
we felt it might be useful to summarize in this section all the main steps, and introduce the notation used.

\bigskip

Here $\mu$ is a doubling measure on $\Om$, and it has lower mass bound exponent $Q_{\mu}$ as defined in \eqref{eq:lower-mass-exp}.

Let $\Om$ be an unbounded uniform domain with $\partial\Om$ also unbounded. 
We fix a point $x_0\in\partial\Om$
and set  
$B_0=\{x\in\overline{\Om}\, :\, d(x,x_0)<1\}$. For each positive integer $k$ we set
$2^kB_0:=\{x\in\overline{\Om}\, :\, d(x,x_0)<2^k\}$, and set 
$\overline{2^{k}B_0}=\{x\in\overline{\Om}\, :\, d(x,x_0)\le 2^k\}$.  

We adopt the procedure outlined in Subsection~\ref{previous work} below.
For positive integers $k$, we set 
\[
\Om_k=\Om\cup(\partial\Om\setminus \overline{2^kB_0})=\Om\cup\{x\in\partial\Om\, :\, d(x,x_0)>2k\}.
\]
Note that then $\Om_k$ is open in $\overline{\Om}$, and
\[
\partial\Om_k=\partial\Om\cap\overline{2^kB_0}.
\]
We fix $\beta>0$ such that $\beta p>Q_{\mu}$, and set $\pip:[0,\infty)\to[0,\infty)$ by
\[
\pip(t)=\min\{1, t^{-\beta}\}.
\]
For an open connected set $U$ that is locally compact but not complete, we set $\partial U=\overline{U}\setminus U$
where $\overline{U}$ is the metric completion of $U$. Let   $d_U(x):=\inf\{d(x,y)\, :\, y\in \partial U\}$.
Consider the metric $d_{\pip}$ on $U$ given by
\[
d_{\pip}(z,w)=\inf_\gamma\int_\gamma \pip\circ d_U\, ds
=\inf_\gamma\int_0^{\ell(\gamma)}  \pip(d_U(\gamma(t)))\, dt,
\]
where we have, without loss of generality, assumed the curves $\gamma$ to be arc-length parametrized, we have 
denoted by $\ell(\gamma)$ the length of $\gamma$, 
 and the infimum
is over all rectifiable paths $\gamma$ in $U$ with end points $z$ and $w$. Let
$\overline{A}^\pip$ denote the completion of $A\subset\overline{U}$ with respect to the metric $d_{\pip}$.

With this, we set 
\[
\Om_{k,\pip}:=\overline{\Om_k\cup\partial\Om_k}^\pip\setminus\partial\Om_k=
\Om_k\cup\{\infty\}.
\]
Note that $\partial\Om_k=\overline{2^kB_0}\cap\partial\Om$ is a bounded set. The analogous 
metric is denoted by $d_{k,\pip}$, and moreover, the transformed 
measure is denoted $\mu_{k,\pip}$, and is given by
\[
\mu_{k,\pip}(A)=\int_A\pip(d_{\Om_k}(x))^p\, d\mu.
\]

 Our strategy is structured through the following  list of  steps leading to the solution of the Neumann problem:
\begin{enumerate}
\item Solve the existence/uniqueness problem for the Dirichlet boundary value problem on $\Om$, with boundary data
$f\in HB^{1-\Theta/p}_{p,p}(\partial\Om)$ and find a solution in $D^{1,p}(\Om)$. 
\item Let $f:\partial\Om\to\R$ be a function in $L^{p'}(\partial\Om,\nu)$ with support in $B_0\cap\partial\Om$
with $\int_{\partial\Om}f\, d\nu=0$. We 
solve the Neumann boundary value problem on  $\Om_{2,\pip}$ 
for this boundary data.
Show that then this solution also is a solution for the Neumann boundary value problem on $\Om$.
We also need to show uniqueness for the case that $f$ is compactly supported.

\item For more general $f\in L^{p'}(\partial\Om, \nu_J)$, 
not necessarily compactly supported, we will solve
the Neumann boundary value problem on $\Om_k$ for each $k\in\N$ with 
appropriately defined approximations $f_k$ of the Neumann boundary data $f_k\to f$ supported in $2^kB_0$. 
\item Next look at the sequence of such solutions $u_k$, one for each $k\in\N$, and  to exploit 
compactness to show that there is a limit function that solves
the Neumann boundary value problem with data $f$ itself. 
\item Prove uniqueness modulo a constant. This is achieved by showing continuity of the solution in 
the $D^{1,p}$ seminorm with respect to the data in the weighted $L^{p'}$ spaces.
\end{enumerate}

\section{Background definitions and results}\label{Sec:background}

\subsection{Doubling property and codimensionality}\label{subsect:doubling}
A measure $\mu$ is \emph{doubling} if it is a Radon measure and there is a constant $C_d\ge 1$ such that
\[
0<\mu(B(x,2r))\le C_d\, \mu(B(x,r))<\infty
\]
for each $x\in X$ and $r>0$.
Doubling measures satisfy the following lower mass bound property: there are constants $c>0$
and $Q_\mu>0$ depending only on $C_d$,  such that
for each $x\in X$, $0<r<R$, and for each $y\in B(x,R)$,
\begin{equation}\label{eq:lower-mass-exp}
c\left(\frac{r}{R}\right)^{Q_\mu}\le \frac{\mu(B(y,r))}{\mu(B(x,R))},
\end{equation}
see for example~\cite[page~76]{HKST}. 

 Given a Radon measure $\mu$ on a domain $\Om$ and a Radon measure $\nu$ on $\partial\Om$,
we say that $\nu$ is \emph{$\Theta$-codimensional with respect to $\mu$} if there is a constant $C\ge 1$ such that 
whenever $0<r<2\diam(\partial\Om)$ and $\xi\in\partial\Om$, we have 
\begin{equation}\label{eq:codim}
\frac{1}{C}\, \frac{\mu(B(\xi,r))}{r^\Theta}\le \nu(B(\xi,r))\le C\, \frac{\mu(B(\xi,r))}{r^\Theta}.
\end{equation}

\subsection{Sobolev-type spaces and Poincar\'e inequalities}\label{subsect:sob-PI}
One of the main features of a first order calculus in metric measure spaces was first developed by
Heinonen and Koskela~\cite{HK}:  given a measurable function $u:X\to\R$, we say that a non-negative Borel measurable function $g$ on
$X$ is an \emph{upper gradient} of $u$ if
\[
 |u(x)-u(y)|\le \int_\gamma g\, ds
\]
for every non-constant compact rectifiable curve $\gamma$ in $X$; here, $x$ and $y$ denote the terminal points of $\gamma$.

The function $u$ is said to be in the homogeneous Sobolev space $D^{1,p}(X)$ 
if $u$ has an upper gradient that belongs to $L^p(X)$;
and $u$ is said to be in the Newton-Sobolev class $N^{1,p}(X)$ if it is in $D^{1,p}(X)$ and in addition, 
$\int_X|u|^p\, d\mu$ is finite. 
Given that upper gradients are not unique, we set the energy seminorm on $D^{1,p}(X)$ by
\begin{equation}\label{penergy}
\mathcal{E}_p(u)^p:=\inf_g\int_Xg^p\, d\mu,
\end{equation}
where the infimum is over all upper gradients $g$ of $u$. The norm on $N^{1,p}(X)$ is given by
\[
\Vert u\Vert_{N^{1,p}(X)}:=\Vert u\Vert_{L^p(X)}+\mathcal{E}_p(u).
\]

If $1\le p<\infty$, for each $u\in D^{1,p}(X)$ there is a unique (up to sets of $\mu$-measure zero) non-negative function $g_u$
that is the $L^p$-limit of a sequence of upper gradients of $u$ from $L^p(X)$ and so that for each upper gradient $g$ of $u$ we
have that $\Vert g_u\Vert_{L^p(X)}\le \Vert g\Vert_{L^p(X)}$. The functions $g_u$ belong to a larger class of ``gradients lengths'' of $u$,
called $p$-weak upper gradients, see for example~\cite{BBbook, HajK, HKST, Sh}. This 
function $g_u$ is said to be the \emph{minimal $p$-weak upper gradient} of $u$.

For $1\le p<\infty$, the metric measure space $(X,d,\mu)$ is said to support a $p$-Poincar\'e inequality if there are constants
$C_P>0$ and $\lambda\ge 1$ such that for all $u\in D^{1,p}(X)$ and balls $B=B(x,r)$ in $X$, we have
\[
\jint_{B(x,r)}|u-u_B|\, d\mu\le C_P\, r\, \left(\jint_{B(x,\lambda r)}g_u^p\, d\mu\right)^{1/p}.
\]
As shown in~\cite{HajK}, if
$X$ is a length space, then we can take $\lambda=1$ at the expense of increasing the constant
$C_P$. It is also well known that the $p$-Poincar\'e inequality and the doubling property of the measure
imply the $(p,p)$-Poincar\'e inequality
 as follows:
\[
\jint_{B(x,r)}|u-u_B|^p\, d\mu\le C_{SP}\, r^p\, \jint_{B(x,r)}g_u^p\, d\mu,
\]
see for instance~\cite{HajK, HKST}. 

\subsection{Besov spaces}\label{subsec2.4}

Consider a metric measure space $(Z,d,\nu)$.  For $0<\theta<1$ and $1<p<\infty$ we will consider the following
Besov energy:
\[
\Vert u\Vert_{\theta,p}^p:=
\int_{Z}\int_{Z}\frac{|u(y)-u(x)|^p}{d(x,y)^{\theta p}\, \nu(B(x,d(x,y)))}\, d\nu(y)\, d\nu(x),
\]
and set $B^\theta_{p,p}(Z)$ to be the space of all $L^p$--functions for which this energy is finite.
If $\nu$ is Ahlfors $Q$-regular, then
we can replace $\nu(B(x,d(x,y))$ with $d(x,y)^Q$ to obtain an equivalent energy. The following theorem
holds also for this modified norm. 

The homogeneous Besov space $HB^\theta_{p,p}(\partial\Omega)$ is the collection of equivalence classes of functions from
$B^\theta_{p,p}(\partial\Omega)$, where two functions $u,v\in B^\theta_{p,p}(\partial\Omega)$ are said to be equivalent
if $\Vert u-v\Vert_{\theta,p}=0$, that is, $u-v$ is $\nu$-a.e.~constant on $\partial\Omega$. By a slight
abuse of notation, we conflate equivalence classes that are elements of $HB^\theta_{p,p}(\partial\Omega)$
with representative functions in those classes, but are careful to remember that then there is an ambiguity up to additive
constants here.

\subsection{Trace and extension theorems}\label{s:trace}
In this section we describe  bounded linear trace  operators 
$T:D^{1,p}(\Omega,\mu)\rightarrow B^{1-\Theta/p}_{p,p}(\partial\Omega,\nu)$.  In the case in which $\partial \Omega$ is bounded, their existence  
 is proved in \cite[Proposition 8.3]{GKS} and follows from the work of Mal\'y \cite{Mal} for bounded 
 uniform domains by passing through the transformation $(\Omega,d,\mu)\mapsto(\Omega_\pip,d_\pip,\mu_\pip)$.

 In this paper, however, we are interested in unbounded domains with unbounded 
 boundaries, and so such trace result is insufficient. However, if we weaken the 
 trace class to the homogeneous $HB^{1-\Theta/p}_{p,p}(\partial\Omega,\nu)$, 
 which is a more natural class of boundary data when $\partial\Omega$ is 
 unbounded, we have the following result from \cite{GS2}.

\begin{theorem}\label{thm-trace-local-est} 
There exists a bounded linear trace operator $T:D^{1,p}(\Om,\mu)\to HB^{1-\Theta/p}_{p,p}(\partial\Om,\nu)$ such that
\[
\| Tu \|_{HB^{1-\Theta/p}_{p,p}(\partial\Om,\nu)}\lesssim \| u \|_{D^{1,p}(\Omega,\mu)}
\] 
for all $u\in D^{1,p}(\Omega,\mu)$. Here   $\Theta$  is the exponent in \eqref{eq:codim} and  $Tu:\partial\Om\to\R$ is given by
\[
\lim_{r\to 0^+}\frac{1}{\mu(B(\xi,r)\cap\Om)}\, \int_{\mu(B(\xi,r)\cap\Om)}|u-Tu(\xi)|\, d\mu=0
\]
for $\nu$-a.e.~$\xi\in\partial\Om$. Moreover, there is a bounded linear extension operator 
$E:HB^{1-\Theta/p}_{p,p}(\partial\Om,\nu)\to D^{1,p}(\Om,\mu)$ such that $T\circ E$ is the identity operator on 
$HB^{1-\Theta/p}_{p,p}(\partial\Om,\nu)$.
\end{theorem}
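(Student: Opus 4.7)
My approach is to construct $T$ as a pointwise Lebesgue-type limit along nontangential approaches, verify the homogeneous Besov energy bound through a chaining argument, and then produce the extension $E$ via a Whitney-type averaging. The natural reduction would be through the uniformization transformation $(\Om, d, \mu) \mapsto (\Om_\pip, d_\pip, \mu_\pip)$ with $\pip(t) = \min\{1, t^{-\beta}\}$ for $\beta p > Q_\mu$, which turns $\Om$ into a bounded uniform domain carrying a doubling, $p$-Poincar\'e measure while leaving $\partial\Om$ unchanged near each boundary point. One cannot, however, simply invoke Mal\'y's bounded trace theorem from~\cite{Mal,GKS} on the transformed space, because the homogeneous Besov seminorm on an unbounded boundary does not fit into the inhomogeneous trace class that theorem outputs. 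What the weights preserve exactly is the upper-gradient energy, and that is precisely what is needed.

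\emph{Pointwise construction of $T$.} I would set
\[
Tu(\xi) := \lim_{r \to 0^+}\, \frac{1}{\mu(B(\xi, r) \cap \Om)}\, \int_{B(\xi, r) \cap \Om} u\, d\mu
\]
whenever the limit exists. Existence $\nu$-a.e.\ follows from the $(p,p)$-Poincar\'e inequality on $\overline{\Om}$ given by (H1) together with the codimensionality (H2): dyadic averages $u_{B(\xi, 2^{-k}) \cap \Om}$ are Cauchy because the consecutive differences are bounded by $2^{-k}\bigl(\frac{1}{\mu(CB)}\int_{CB} g_u^p\, d\mu\bigr)^{1/p}$, and a codimension-$\Theta$ Hardy-type maximal inequality shows the telescoping sum converges on a set of full $\nu$-measure. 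The Lebesgue-point identity stated in the theorem then follows from the Poincar\'e inequality applied to $u - Tu(\xi)$ on shrinking boundary balls.

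\emph{The Besov seminorm estimate.} For $\xi, \eta \in \partial\Om$ with $r = d(\xi, \eta)$, the uniformity of $\Om$ furnishes a Whitney-type point $z \in \Om$ with $d(z, \xi)$, $d(z, \eta)$, and $\dist(z, \partial\Om)$ all comparable to $r$. Chaining along dyadic balls from $\xi$ up to $z$ and back down to $\eta$, and applying the $(p,p)$-Poincar\'e inequality on each link, yields a pointwise bound of the form $|Tu(\xi) - Tu(\eta)|^p \lesssim r^p \sum_k 2^{-k\alpha} M_k(g_u^p)(\xi) + (\text{symmetric term in }\eta)$, where $M_k$ denotes an average of $g_u^p$ over a dyadic ball of scale $2^{-k}r$ intersected with $\Om$ and $\alpha>0$ comes from Poincar\'e self-improvement. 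Inserting this into the double integral defining the Besov seminorm, applying Fubini, and converting $\nu(B(\xi, s))$ to $s^{-\Theta}\mu(B(\xi, s) \cap \Om)$ via \eqref{eq:codim} produces the desired bound $\| Tu\|_{HB^{1-\Theta/p}_{p,p}(\partial\Om, \nu)} \lesssim \| u\|_{D^{1,p}(\Om, \mu)}$.

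\emph{Extension and main obstacle.} For $E$, I would take a Whitney decomposition $\{B_i\}$ of $\Om$ with radii $r_i \approx \dist(x_i, \partial\Om)$, an associated Lipschitz partition of unity $\{\psi_i\}$ with upper gradients of order $r_i^{-1}$, and boundary shadows $\widehat B_i := B(\xi_i, C r_i) \cap \partial\Om$ for nearest boundary points $\xi_i$. Define $Eu(x) := \sum_i \psi_i(x)\, u_{\widehat B_i}$. Since $\sum_i \psi_i \equiv 1$ on $\Om$, a pointwise upper gradient of $Eu$ is $\sum_i |g_{\psi_i}|\, |u_{\widehat B_i} - u_{\widehat B_{i_0}}|$ for a fixed reference index $i_0$; bounded overlap of Whitney balls and \eqref{eq:codim} used in the reverse direction convert the $L^p(\Om, \mu)$ energy into a weighted sum of boundary differences $|u_{\widehat B_i} - u_{\widehat B_j}|$, which is then dominated by the Besov seminorm through the same chaining mechanism in reverse. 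Finally, $T \circ E = \mathrm{id}$ follows from the Lebesgue-point characterization of $T$, since $Eu$ is a weighted average of $u$-averages over boundary-comparable shadows. The main obstacle throughout is the unboundedness of $\partial\Om$: the chaining and overlap arguments must produce estimates involving \emph{only} $g_u$ on $\Om$ and the Besov seminorm on $\partial\Om$, with no parasitic $L^p(\partial\Om, \nu)$ terms, since no such lower-order control on $Tu$ is available when $\partial\Om$ has infinite $\nu$-measure. This is precisely what forces the homogeneous formulation and the careful bookkeeping of~\cite{GS2}, which I would follow.
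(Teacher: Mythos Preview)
Your proposal is essentially correct and aligns with the paper: the paper does not prove this theorem at all but simply cites it as a result from~\cite{GS2}, and you likewise conclude by deferring to~\cite{GS2} after sketching the argument. Your outline of the Lebesgue-point construction of $T$, the chaining estimate for the Besov seminorm, and the Whitney-shadow extension $E$ is a faithful summary of the strategy in that reference, and your identification of the key obstacle---that unboundedness of $\partial\Om$ forbids any parasitic $L^p(\partial\Om,\nu)$ terms and forces the purely homogeneous formulation---is exactly the point.
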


\subsection{Uniform domains}\label{subsect:Uniform}
Let $(X,d)$ be a locally compact and non complete metric space $(\Omega, d)$. Denote by $\partial \Omega=\overline \Omega\setminus \Omega$.
Following \cite{MS} (see also \cite{Jones, BHK}), $(X,d)$ is a {\it uniform space} if 
there exists a constant $C_U>0$ such that for every pair 
$x,y\in \Om$ one can find a 
curve $\gamma$ from $x$ to $y$ such that 
${\ell(\gamma)} \le C_U d(x,y)$ and such that for each $z$ in the 
image of $\gamma$ one has 
\[
\min \{\ell(\gamma_{x,z}), \ell (\gamma_{z,y})\} \le C_U d(z, \partial \Omega),
\]
where we have denoted by $\gamma_{a,b}$ 
a subcurve of $\gamma$ with end points $a$ and $b$, and  $\ell(\gamma)$ is the length of $\gamma$.
Here, if $\gamma$ has more than one subcurve with end points
$x,z$ or with end points $z,y$, then the above inequality is presumed to hold for each such subcurve.

\subsection{Differentiable structures}\label{Sec:Cheegerishness}
Some of the properties we are interested in depend on the existence of an Euler-Lagrange equation satisfied by energy minimizers. 
To achieve that we will be using Cheeger differentiable structures (see \cite[Theorem~4.38]{Che}).
A metric measure space $(\Om,d,\mu)$ is said to support a Cheeger differential structure of dimension $N\in\N$ if there exists a 
collection of coordinate patches $\{(\Om_\alpha,\psi_\alpha)\}$ and a $\mu$-measurable inner product 
$\langle \cdot,\cdot\rangle_{x}$, $x\in \Om_\alpha$, on $\R^N$ and a constant $C\ge 1$ such that
\begin{enumerate}
\item each $\Om_\alpha$ is a measurable subset of $\Om$ with positive measure and 
$\bigcup_\alpha \Om_\alpha$ has full measure;
\item each $\psi_\alpha:\Om_\alpha\rightarrow\R^N$ is Lipschitz;
\item for every function $u\in D^{1,p}(\Om)$, for $\mu$-a.e. $x\in \Om_\alpha$ there is a vector $\nabla u(x)\in\R^N$ such that
\[
\elimsup\limits_{\Om_\alpha\ni y\rightarrow x}\frac{|u(y)-u(x)-\nabla u(x)\cdot\left[\psi_\alpha(y)-\psi_\alpha(x)\right]|}{d(y,x)}=0,
\]
where 
$\nabla u(x)\cdot\left[\psi_\alpha(y)-\psi_\alpha(x)\right]=\sum_{i=1}^N(\nabla u(x))_i\, [\psi_{\alpha, i}(x)-\psi_{\alpha,i}(y)]$ 
denotes the Euclidean inner product, see for instance~\cite[Corollary~13.5.5]{HKST},

\item for every function $u\in D^{1,p}(\Om)$, for $\mu$-a.e. $x\in \Om_\alpha$ we have that
\[
\frac{1}{C}\, g_u(x)^2\le \langle \nabla u(x), \nabla u(x)\rangle_x\le C\, g_u(x)^2
\]
where $g_u$ is the minimal $p$-weak upper gradient of $u$.
\end{enumerate} 

When the metric $d$ is doubling, we may assume that the collection of coordinate patches is countable and 
that the coordinate 
neighborhoods $\{\Om_\alpha\}$ are pairwise disjoint. Note that there may be more than one possible 
Cheeger differential structure on a given space. 

A function $u\in D^{1,p}(\Om)$ is a \emph{Cheeger $p$-harmonic
function} in $\Om$ if, whenever $v\in D^{1,p}(\Om)$ has compact support in $\Om$, we have
\[
\int_{\text{supt}(v)}|\nabla u|^p\, d\mu\le \int_{\text{supt}(v)}|\nabla (u+v)|^p\, d\mu.
\]
Equivalently, we have the following corresponding Euler-Lagrange equation:
\[
\int_\Om|\nabla u(x)|^{p-2}\langle \nabla u(x),\nabla v(x)\rangle_x\, d\mu(x)=0.
\]
For brevity, in our exposition we will suppress the dependence of $x$ on the inner product structure, and denote
\[
\langle \nabla u(x),\nabla v(x)\rangle_x=:\nabla u(x)\cdot\nabla v(x)
\]
when this will not lead to confusion.
Cheeger $p$-harmonic functions are quasiminimizers of the 
$p$-energy~\eqref{penergy} in the sense of Giaquinta, and hence
we can avail ourselves of the properties derived in~\cite{KiSh}.

\subsection{Conformal invariance of $p$-harmonicity}\label{sec:conformal invariance} 

If $\Om$ is as in the setting of the paper, then it supports a Cheeger differential structure with the 
property that $|\nabla u(x)|\approx g_{u,d}(x)$ for $\mu$-a.e. $x\in\Om$, where 
$|\nabla u(x)|^2:=\langle\nabla u(x),\nabla u(x)\rangle_x$
and $g_{u,d}$ is the minimal $p$-weak upper gradient of the function $u$, with respect to the metric $d$. 

Next we explain how the transformation described in Section~\ref{Sec:strategy} above
affects a possible choice of the Cheeger differentiable structure.
Note that in Condition~(3) above, we could just as well have replaced $d(x,y)$ with $d_\pip(x,y)$,
as
\[
\lim_{y\to x}\frac{d(x,y)}{d_\pip(x,y)}=\frac{1}{\pip(d_{\Om_k}(x))},
\]
with $0<\pip(d_{\Om_k}(x))<\infty$ for each $x\in\Om_k$. Note also that as $d_\pip$ and $d$ are locally bi-Lipschitz equivalent on $\Om_k$,
it follows that $\psi_\alpha$ is also Lipschitz with respect to the metric $d_\pip$ by ensuring that each 
measurable set $\Om_\alpha$ is contained in a ball whose closure is contained in $\Om_k$. That is,
\begin{align*}
&\elimsup\limits_{\Om_\alpha\ni y\rightarrow x}\frac{|u(y)-u(x)-\nabla u(x)\cdot\left[\psi_\alpha(y)-\psi_\alpha(x)\right]|}{d_{k,\pip}(y,x)}\\
&=
\elimsup\limits_{\Om_\alpha\ni y\rightarrow x}\frac{|u(y)-u(x)-\nabla u(x)\cdot\left[\psi_\alpha(y)-\psi_\alpha(x)\right]|}{d(y,x)}\frac{d(y,x)}{d_{k,\pip}(y,x)}
=0.
\end{align*}
In conclusion, the transformed space $\Om_{k,\pip}$
also supports the Cheeger differential structure 
given by $\nabla_{k,\pip} u=\nabla u$, 
with the coordinate maps $\phi_{k,\alpha}=\psi_\alpha$ and 
the choice of inner product  given by
\[
\langle \cdot,\cdot\rangle_{x,k,\pip}:=
\frac{1}{\pip(d_{\Om_k}(x))^2}\langle \cdot,\cdot\rangle_{x}.
\]
It follows by~\cite[Equation~(7.2)]{GKS} that
\begin{equation}\label{D:nabla phi}
|\nabla_{k,\pip} u(x)|_\pip=\frac{1}{\pip\circ d_{\Om_k}(x)}|\nabla u(x)|\approx\frac{1}{\pip\circ d_{\Om_k}(x)}g_{u,d}(x)=g_{u,k,\pip}(x)
\end{equation}
for $\mu$-a.e. $x\in\Om$. Here, by $|\nabla_{k,\pip} u(x)|_{k,\pip}$ we mean 
$\sqrt{\langle \nabla_{k,\pip} u(x),  \nabla_{k,\pip} u(x)\rangle_{x,k,\pip}}$.
Given this transformation, we note that if $u,v\in D^{1,p}(\Om)$, then
\begin{align}\label{eq:ChEnergy-trans}
\int_{\Om_{k,\pip}} |\nabla_{k,\pip} u|_{k,\pip}^{p-2}\langle \nabla_{k,\pip} u(x), \nabla_{k,\pip} v(x)\rangle_{x,k,\pip} \, d\mu_{k,\pip}(x)
 &= \int_{\Om_k} |\nabla u|^{p-2} \nabla u\cdot\nabla v \, d\mu\notag\\
 =&\int_\Om |\nabla u|^{p-2} \nabla u\cdot\nabla v \, d\mu,
\end{align}
where, in the last equality, we have used the fact that $\mu(\Om_k\setminus\Om)=\mu((\partial\Om)\setminus B(x_0,k))=0$
because of the co-dimensionality relation between the measure $\nu$ on $\partial\Om$ and the measure $\mu$ on $\Om$
(indeed, any extension of $\mu$ as a Radon measure to $\overline{\Om}$ that preserves the doubling property will see
$\partial\Om$ as a null-measure set).
Thus, the $p$-harmonicity with respect to the Cheeger structure on $(\Om,d,\mu)$ 
is \emph{the same notion as} $p$-harmonicity
with respect to the transformed Cheeger structure on $(\Om_{k,\pip},d_{k,\pip},\mu_{k,\pip})$ for each positive integer $k$.

\subsection{An equivalent formulation of the Neumann problem}\label{sub:Def-Neumann}

Next, we discuss equivalent formulations for the Neumann problem and a version of a result from~\cite[Theorem~1.5]{CKKSS} 
that will be helpful in the following.

We begin by showing a continuity result that plays a crucial role in the proof of the existence of the solution.

\begin{lemma}\label{panacea-bis} Let $(Z,d,\nu)$ be a doubling metric measure space and fix $x_0\in \partial Z$. 
Assume that $f\in L^{p^\prime}(Z, \nu)$, with zero average $\int_Z f(x) d\nu(x)=0$ and that
$\int_Z |f(x)|^{p^\prime} J(x,x_0) d\nu(y) <\infty$, where
\begin{equation}\label{weight-v-bis}
J(x,y):=d(x,y)^{p'\theta }\nu(B(x,d(x,y)))^{p'/p}.
\end{equation}
There exists a constant $C>0$ depending only on $x_0$ and on the structure constants such that for every 
$v\in HB^\theta_{p,p}(Z)$, 
    
\begin{equation}\label{eq-panacea-bis}
\int_{Z} f(x) v(x) d\nu (x) \le  C\left(\int_{Z}|f|^{p^\prime}J(x,x_0)\, d\nu(x)\right)^{1/p^\prime}\, 
\|v\|_{HB^\theta_{p,p}(Z)}.
\end{equation}

\end{lemma}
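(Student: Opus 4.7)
The plan is to exploit the zero-mean condition $\int_Z f\, d\nu=0$ to replace $v$ by $v-v_{B_0}$ for the reference ball $B_0=B(x_0,1)$, and then apply H\"older's inequality in a doubled integral tailored to the Besov quadratic form.

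First, since $\int_Z f\,d\nu=0$ and $v_{B_0}$ is a constant, I would write
\[
\int_Z f v\,d\nu = \int_Z f(x)\bigl(v(x)-v_{B_0}\bigr)\,d\nu(x)
=\frac{1}{\nu(B_0)}\iint_{Z\times B_0} f(x)\bigl(v(x)-v(y)\bigr)\,d\nu(y)\,d\nu(x),
\]
using the identity $v(x)-v_{B_0}=\fint_{B_0}(v(x)-v(y))\,d\nu(y)$.

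Second, I apply H\"older's inequality in the product measure on $Z\times B_0$ with exponents $p',p$, inserting and dividing by the Besov weight $d(x,y)^{\theta}\,\nu(B(x,d(x,y)))^{1/p}$. The $L^p$ factor then majorizes $\|v\|_{HB^\theta_{p,p}(Z)}$ directly because the integrand coincides with the Besov kernel and the domain $Z\times B_0\subset Z\times Z$. The $L^{p'}$ factor reduces to estimating, for each $x\in Z$,
\[
I(x):=\int_{B_0} d(x,y)^{p'\theta}\,\nu\bigl(B(x,d(x,y))\bigr)^{p'/p}\,d\nu(y).
\]

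Third, I would prove the key geometric bound $I(x)\lesssim \nu(B_0)\,J(x,x_0)$ using doubling. On the \emph{far} region $\{d(x,x_0)\ge 2\}$, every $y\in B_0$ satisfies $d(x,y)\approx d(x,x_0)$ and, by doubling, $\nu(B(x,d(x,y)))\approx \nu(B(x_0,d(x,x_0)))$, so the inner integral is comparable to $\nu(B_0)\,J(x,x_0)$ as desired. Plugging this back and using the identity $p'/p+1-p'=0$ yields $\frac{1}{\nu(B_0)^{p'-1}}\int|f|^{p'}J\,d\nu$, which is the claimed bound with a constant depending on $x_0$ through $\nu(B_0)$.

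The main obstacle will be the \emph{near} region $\{d(x,x_0)<2\}$, where $J(x,x_0)\to 0$ as $x\to x_0$ but the inner integral $I(x)$ remains of order $\nu(B_0)^{1+p'/p}$. To absorb this into $\|f\|_{L^{p'}(\nu_J)}$, I would refine the subtraction $v-v_{B_0}$ by telescoping through the averages $v_{B(x_0,2^{-k})}$: decompose $B(x_0,2)\setminus\{x_0\}$ into dyadic annuli $A_k=B(x_0,2^{-k+1})\setminus B(x_0,2^{-k})$, rewrite $\int_{A_k} f(v-v_{B_0})\,d\nu$ using the zero mean of $f$ against each scale average, apply the Besov--Poincar\'e inequality on each annulus to dominate $|v-v_{B(x_0,2^{-k})}|$ by the local Besov energy, and then pair with $|f|$ in $L^{p'}(A_k,\nu_J)$ at the corresponding scale. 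Summing the resulting series converges thanks to the lower mass bound on $\mu$ and the codimensionality hypothesis $\Theta<p$, giving the required control on the near region and completing the estimate $I\le C\,\|f\|_{L^{p'}(\nu_J)}\,\|v\|_{HB^\theta_{p,p}(Z)}$.
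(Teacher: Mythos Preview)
Your core approach matches the paper's: both exploit $\int_Z f\,d\nu=0$ to rewrite $\int_Z f v\,d\nu$ as an integral of $f(x)(v(x)-v(y))$ and then apply H\"older with weight $J(x,y)^{1/p'}$ on the product $Z\times B_0$, pairing the $L^{p'}$-factor against the Besov energy of $v$. The two routes diverge only at the final step, where one must reduce the double integral $\iint_{Z\times B_0}|f(x)|^{p'}J(x,y)\,d\nu\,d\nu$ to a single integral involving $J(\cdot,x_0)$.

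You try the \emph{pointwise} bound $I(x)=\int_{B_0}J(x,y)\,d\nu(y)\lesssim \nu(B_0)\,J(x,x_0)$, correctly observe that it fails for $x$ near $x_0$, and then propose a dyadic telescope to repair the near region. The paper instead estimates in the opposite order: by Fubini and the trivial bound
\[
\int_{B_0}\!\!\int_Z |f|^{p'}J(x,y)\,d\nu(x)\,d\nu(y)\ \le\ \nu(B_0)\,\max_{y\in\overline{B_0}}\int_Z|f|^{p'}J(\cdot,y)\,d\nu,
\]
after which the maximizing $y$ is taken as the base point $x_0$ (any other choice of base point alters only the constant, which is allowed to depend on $x_0$). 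No telescoping is needed.

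Your proposed fix for the near region is therefore unnecessary, and as written it has genuine problems. It invokes the codimensionality exponent $\Theta<p$, but the lemma is stated for an arbitrary doubling space $(Z,d,\nu)$ with no such structure. It also appeals to ``the zero mean of $f$ against each scale average,'' yet $f$ has only \emph{global} zero mean; when you restrict to an annulus $A_k$ the term $(v_{B(x_0,2^{-k})}-v_{B_0})\int_{A_k}f\,d\nu$ does not vanish, so the constants cannot be discarded scale by scale as you suggest.
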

\begin{proof}
Since $f$ has zero average, it follows that for every $y\in Z$ one has
$\int_{Z} f(x) v(x) d\nu (x)=\int_{Z}f(x)[v(x)-v(y)]\, d\nu(x)$, and so
 we obtain
\begin{align*}
\int_{Z} f(x) v(x) d\nu (x)&\le \int_{Z}J^{\frac{1}{p^\prime}}(x,y)|f(x)|\, \frac{|v(x)-v(y)|}{d(x,y)^\theta \nu(B(x,d(x,y)))^{1/p}}\, d\nu(x)\\
&\le \left(\int_{Z}|f|^{p^\prime}J(x,y)\, d\nu(x)\right)^{1/p^\prime}\, 
\bigg(\int_{Z}\frac{|v(x)-v(y)|^p}{d(x,y)^{\theta p}\nu(B(x,d(x,y)))}\, d\nu(x)\bigg)^{\frac{1}{p}}.
\end{align*}
Integrating both sides of the inequality in  $y\in B  $, for an arbitrary ball $B\subset Z$, and invoking H\"older inequality on the right hand side  yields

\begin{multline*}
\nu(B) \int_{Z} f(x) v(x) d\nu (x)  \\
\le \int_B  \Bigg[ \ \left(\int_{Z}|f|^{p^\prime}J(x,y)\, d\nu(x)\right)^{1/p^\prime} \, 
\left(\int_{Z}\frac{|v(x)-v(y)|^p}{d(x,y)^{\theta p}\nu(B(x,d(x,y)))}\, d\nu(x)\right)^{\frac{1}{p}}\Bigg] d\nu(y)
\\ 
\le \left(\int_B \int_{Z}|f|^{p^\prime}J(x,y)\, d\nu(x) d\nu(y) \right)^{\frac{1}{p'}}
\left(\int_B\int_{Z}\frac{|v(x)-v(y)|^p}{d(x,y)^{\theta p}\nu(B(x,d(x,y)))}\, d\nu(x) d\nu(y)
\right)^{\frac{1}{p}}
\\
\le 
\nu(B)^{\frac{1}{p'}} \left(\max_{y\in \overline B\cap Z} \int_{Z}|f|^{p^\prime}J(x,y)\, d\nu(x)\right)^{\frac{1}{p'}}
\|v\|_{HB^\theta_{p,p}(Z)}
\end{multline*}
Since the weight $J$ is a continuous function of $x$, the maximum is achieved at some point $x_0\in \overline B$. Clearly, 
one can substitute $x_0$ with another arbitrary point, at the cost of adding a multiplicative constant in front of the right 
hand side. To conclude the argument we note that we can choose $B$ so that $\nu(B)=1$, thus eliminating that dependence.
\end{proof}

This lemma allows us to make sense of the expression
$$\int_{\partial\Omega} f(x) \phi(x) d\nu(x)$$
when $f\in L^{p'}(\partial \Omega, \nu_J)$ and $\phi \in HB^{\theta}_{p,p} (\partial \Omega, \nu)$.
Recalling the trace theorems in Section \ref{s:trace} we can then state the following

\begin{theorem}\label{thm:equiv-unbounded}
Suppose that conditions~(H0),~(H1) and~(H2) hold for the metric measure space $(X,d,\mu)$ and $\Om$.
Fix $1<p<\infty$, and suppose that
$f\in L^{p'}(\partial\Om, \nu_J)\cap L^{p'}(\partial\Om, \nu)$ where $p'$ is the H\"older
conjugate of $p$ with $\int_{\partial\Omega}f\,d\nu =0$.
Let $u\in D^{1,p}(\Omega)=D^{1,p}(\overline{\Omega})$. Then
the following are equivalent.
\begin{enumerate}
\item[\rm{(a)}] $u$ is a solution to the Neumann boundary value problem with data
  $f$ in the domain $\Omega\subset X$; that is, for all $\phi\in D^{1,p}(\overline{\Omega})$,
      \[
      \int_{\Omega}|\nabla u|^{p-2}\nabla u\cdot\nabla\phi\,d\mu=\int_{\partial\Omega}\phi f\,d\nu
      \]
  \item[\rm{(b)}] $u$ minimizes the energy functional
        \[
        I(v):=\int_{\overline{\Omega}}|\nabla v|^{p}\,d\mu-p\int_{\partial{\Omega}}v f\,d\nu
        \]
        among all functions $v\in D^{1,p}(\overline{\Omega})$.  Here we extend $\nu$ to a measure
        on the closure $\overline{\Omega}$ by zero outside of $\partial\Omega$.        
 \end{enumerate}
\end{theorem}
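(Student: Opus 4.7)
The equivalence is classical calculus of variations, once one settles the well-definedness of the boundary pairing. The only genuinely non-trivial input beyond standard convex analysis is Lemma~\ref{panacea-bis}, which relies essentially on the zero-mean hypothesis on $f$.

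\emph{Preliminary.} I would first verify that $\phi \mapsto \int_{\partial\Omega}\phi f\,d\nu$ is a continuous linear functional on $D^{1,p}(\overline{\Omega})$. By the trace theorem (Theorem~\ref{thm-trace-local-est}), the trace $T\phi$ lies in $HB^{1-\Theta/p}_{p,p}(\partial\Omega) = HB^\theta_{p,p}(\partial\Omega)$ with $\|T\phi\|_{HB^\theta_{p,p}(\partial\Omega)}\lesssim\|\phi\|_{D^{1,p}(\Omega)}$. Since $\int_{\partial\Omega} f\, d\nu = 0$, Lemma~\ref{panacea-bis} applied with $v=T\phi$ yields
\[
\Bigl|\int_{\partial\Omega}(T\phi)\,f\,d\nu\Bigr| \le C\,\|f\|_{L^{p'}(\partial\Omega,\nu_J)}\,\|\phi\|_{D^{1,p}(\Omega)}.
\]

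\emph{(a) $\Rightarrow$ (b).} For any competitor $v\in D^{1,p}(\overline{\Omega})$, I would apply the pointwise first-order convexity inequality for $z\mapsto|z|^p$ in the Cheeger inner product space:
\[
|\nabla v|^p \ge |\nabla u|^p + p\,|\nabla u|^{p-2}\nabla u\cdot(\nabla v-\nabla u)\qquad \mu\text{-a.e. in }\Omega,
\]
valid for every $p>1$, with the convention $|0|^{p-2}\cdot 0 = 0$ when $1<p<2$. Integrating over $\Omega$ and applying (a) with the admissible test function $\phi := v-u \in D^{1,p}(\overline{\Omega})$, the right-hand side collapses and we obtain $I(v)\ge I(u)$.

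\emph{(b) $\Rightarrow$ (a).} Fix $\phi \in D^{1,p}(\overline{\Omega})$ and consider the real-valued function $h(t) := I(u + t\phi)$ for $t\in\R$. By the preliminary step and H\"older's inequality, $h$ is finite-valued and convex (as $t\mapsto|a+tb|^p$ is convex and the boundary term is linear in $t$). Since $u$ minimizes $I$, $h$ has its global minimum at $t=0$, forcing both one-sided derivatives there to vanish. To identify these derivatives, I would pass to the limit in the difference quotient using the mean-value-type estimate
\[
\Bigl| |\nabla u + t\nabla\phi|^p - |\nabla u|^p \Bigr| \le C_p\,|t|\bigl(|\nabla u|^{p-1}+|\nabla \phi|^{p-1}\bigr)|\nabla\phi| \quad \text{for } |t|\le 1,
\]
whose right-hand side is in $L^1(\Omega,\mu)$ by H\"older. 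Dominated convergence then gives $h'(0) = p\int_\Omega |\nabla u|^{p-2}\nabla u\cdot\nabla\phi\,d\mu - p\int_{\partial\Omega}\phi f\,d\nu$, which must vanish; since $\phi$ is arbitrary, (a) follows.

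\emph{Main obstacle.} The delicate step is the dominated-convergence argument in the subquadratic regime $1<p<2$, where $|z|^{p-2}z$ is singular at $z=0$. However, this singular function is multiplied by $z$ and is continuously extended by $0$ at the origin, and the elementary domination above is uniform in $|t|\le 1$ regardless of the sign of $p-2$, so the interchange of limits succeeds uniformly in $p>1$.
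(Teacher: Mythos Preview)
Your proposal is correct and follows precisely the approach the paper has in mind: the paper does not give a detailed proof but states that the argument from~\cite{CKKSS} for bounded $\Omega$ ``extends immediately to the unbounded case,'' and what you have written is exactly that standard convex-analysis/first-variation argument, with the one genuinely new ingredient in the unbounded setting---the well-definedness and continuity of the boundary pairing via Lemma~\ref{panacea-bis} and the trace theorem---correctly isolated in your preliminary step.
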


This theorem was proved in \cite{CKKSS} in the case when $\Omega$ is bounded, however the 
proof of this equivalence extends immediately to the unbounded case. 

The following corollary (proved as  in \cite{CKKSS}) yields a connection to the Euclidean Neumann problem, and to Caffarelli-Silvestre's setting.

\begin{corollary}\label{normal-derivative} Assume the hypotheses of the previous theorem and let us continue to denote by $\nu$ the 
extension of $\nu$ to a measure
      on the closure $\overline{\Omega}$ which is zero outside of $\partial\Omega$.
  If $u$ is $p$-harmonic in $\Omega$, then one has
      \begin{equation}\label{limit}
      |\nabla u|^{p-2}\nabla u\cdot\nabla\eta_\epsilon\,d\mu\rightharpoonup -f\,d\nu.
      \end{equation}
      Here the convergence is that of weak convergence of signed Radon measures on $\overline{\Om}$,
      and the function $\eta_\epsilon:\overline{\Omega}\to\R$ is given by
$\eta_\eps(x)=\min\{1, d(x, \overline{\Om}\setminus\Omega)/\eps\}$. 
\end{corollary}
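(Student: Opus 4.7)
The plan is to substitute the test function $\phi(1-\eta_\eps)$, for $\phi$ Lipschitz with compact support in $\overline{\Om}$, into the weak Neumann formulation provided by Theorem~\ref{thm:equiv-unbounded}(a). The function $\eta_\eps$ is a Lipschitz cutoff whose trace on $\partial\Om$ is identically zero, so $\phi(1-\eta_\eps)$ is concentrated in the $\eps$-tubular neighborhood of $\partial\Om$ and belongs to $D^{1,p}(\overline{\Om})$ via the Leibniz product rule for Cheeger gradients.

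Applying that product rule yields
\[
\nabla(\phi(1-\eta_\eps)) = (1-\eta_\eps)\nabla\phi - \phi\,\nabla\eta_\eps.
\]
Since $(1-\eta_\eps)\vert_{\partial\Om}\equiv 1$, the identity in Theorem~\ref{thm:equiv-unbounded}(a) applied to $\phi(1-\eta_\eps)$ becomes
\[
\int_{\Om}(1-\eta_\eps)|\nabla u|^{p-2}\nabla u\cdot\nabla\phi\,d\mu - \int_{\Om}\phi\,|\nabla u|^{p-2}\nabla u\cdot\nabla\eta_\eps\,d\mu = \int_{\partial\Om}\phi f\,d\nu.
\]

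To conclude, I pass to the limit $\eps\to 0^+$. For every $x\in\Om$ one has $d(x,\partial\Om)>0$, so $\eta_\eps(x)\to 1$ pointwise on $\Om$, and since (H2) yields $\mu(\partial\Om)=0$, the convergence is $\mu$-a.e.~on $\overline{\Om}$. The integrand $(1-\eta_\eps)|\nabla u|^{p-2}\nabla u\cdot\nabla\phi$ is dominated in absolute value by $|\nabla u|^{p-1}|\nabla\phi|$, which belongs to $L^1(\overline{\Om})$ by H\"older's inequality since $|\nabla u|\in L^p(\Om)$ and $\phi$ has bounded gradient with compact support. Dominated convergence then gives
\[
\lim_{\eps\to 0^+}\int_{\Om}\phi\,|\nabla u|^{p-2}\nabla u\cdot\nabla\eta_\eps\,d\mu = -\int_{\partial\Om}\phi f\,d\nu,
\]
which is precisely the required weak convergence of signed Radon measures on $\overline{\Om}$ tested against $\phi$. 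The extension from $\phi\in\text{Lip}_c(\overline{\Om})$ to arbitrary $\phi\in C_c(\overline{\Om})$ follows from density together with uniform local total-variation bounds for $|\nabla u|^{p-2}\nabla u\cdot\nabla\eta_\eps\,d\mu$ on compact subsets of $\overline{\Om}$, which in turn come from H\"older's inequality.

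The principal technical point is ensuring that $\phi(1-\eta_\eps)$ is admissible as a test function in Theorem~\ref{thm:equiv-unbounded}(a) and that the product rule for Cheeger gradients holds in the required form. Both facts are standard when one factor is Lipschitz with compact support, but they need to be verified inside the Cheeger differential structure described in Subsection~\ref{Sec:Cheegerishness}. A secondary concern is the density argument for the weak convergence of measures, but this reduces to a routine approximation once the uniform local bounds mentioned above are in hand.
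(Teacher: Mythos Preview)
Your approach is essentially the one the paper has in mind (it defers the proof to~\cite{CKKSS}, where the bounded case is handled by exactly this substitution of $\phi(1-\eta_\eps)$ into the weak Neumann identity, followed by dominated convergence). The core argument---plugging $\phi(1-\eta_\eps)$ into condition~(a) of Theorem~\ref{thm:equiv-unbounded}, applying the Leibniz rule, and letting $(1-\eta_\eps)\to 0$ pointwise on $\Om$---is correct and yields the claimed limit against every $\phi\in\Lip_c(\overline{\Om})$.

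There is one gap in your final density step. You assert that uniform local total-variation bounds for the measures $|\nabla u|^{p-2}\nabla u\cdot\nabla\eta_\eps\,d\mu$ on compact sets follow from H\"older's inequality. The naive H\"older estimate gives
\[
\int_K\big||\nabla u|^{p-2}\nabla u\cdot\nabla\eta_\eps\big|\,d\mu
\le\|\nabla u\|_{L^p(K)}^{p-1}\,\|\nabla\eta_\eps\|_{L^p(K)},
\]
and since $|\nabla\eta_\eps|\le\eps^{-1}$ is supported on $\{d_\Om<\eps\}$, the codimension hypothesis~(H2) gives $\|\nabla\eta_\eps\|_{L^p(K)}^p\lesssim\eps^{\Theta-p}$, which blows up because $\Theta<p$. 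So H\"older alone does not furnish the uniform bound you claim. What \emph{does} follow from your identity is a uniform bound of the form $\big|\int\phi\,d\mu_\eps\big|\lesssim\|\nabla\phi\|_{L^p}+\|\phi\|_\infty$, i.e.\ boundedness in the dual of $\Lip_c$ rather than of $C_c$. Either interpret the weak convergence in~\eqref{limit} as convergence against Lipschitz test functions (which is the operative notion in this paper and in~\cite{CKKSS}), or supply a separate argument for the total-variation bound; do not attribute it to H\"older.
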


Note that in view of  our structural assumption~(H2), we have
$$\int_{B(\zeta,r)}|\nabla \eta_\eps|\, d\mu\le C\eps^{-1+\Theta}\nu(B(\zeta,2r))$$ whenever
$\zeta\in\partial\Om$ and $r>0$ for some constant $C>0$ depending only on the structural constants.

\begin{remark} 
In the bounded case, in \cite{CKKSS} it is shown that the limit identity \eqref{limit} is equivalent to $u$ being a solution of the 
Neumann problem. However in the unbounded setting this limit identity does not provide sufficient information about the 
behavior of $u$ at infinity to guarantee the missing implication.
\end{remark}

\subsection{Dirichlet problem  in unbounded domains with bounded boundary}\label{previous work}

In this section, we recall a few results from the recent articles \cite{GS} and \cite{GKS} 
concerning the Dirichlet boundary value problem in unbounded domains $ \Omega$, 
whose boundary $\partial \Omega$ is bounded.

Let $(\Om,d)$ be a locally compact, unbounded, metric space such that $\Om$ is 
a uniform domain in its completion $\overline{\Om}$ with bounded boundary 
$\partial\Om:=\overline{\Om}\setminus\Om$. We endow 
$\Om$ with a doubling measure $\mu$ such that the metric 
measure space $(\Om,d,\mu)$ supports a $p$-Poincar\'{e} 
inequality for some fixed $1\leq p<\infty$. 
 We also require the co-dimensionality hypothesis \eqref{eq:codim}.

We recall here again the transformation briefly described in Section~\ref{Sec:strategy} above.
Consider the {\it dampening function} $\pip:(0,\infty)\rightarrow(0,\infty)$ given by 
$\pip(t)=\min\{1,t^{-\beta}\}$ for $t>0$ and $\beta>1$ satisfying $\beta\,p>Q$,  
 where $Q$ is
the lower mass bound exponent of $\mu$
from~\eqref{eq:lower-mass-exp}.
Setting $d_\Omega(x)=d(x,\partial \Omega)$, we consider the new transformed metric 
$d_\pip$ on $\Om$ given by
\[
d_\pip(x,y):=\inf_\gamma \ell_\pip(\gamma):= \inf_\gamma\int_\gamma \pip(d_\Omega(\gamma(t)))\, dt,
\]
with the infimum ranging over all rectifiable curves $\gamma$, arclength-parametrized,
in $\Om$ with end points $x,y\in\Om$,  
and a new measure 
\[
d\mu_\pip(x)=\pip(d_\Omega(x))^p\, d\mu.
\]

Consider now $\Om_\pip:=\overline{\Om\cup\partial\Om}^\pip\setminus\partial\Om$, 
where $\overline{A}^\pip$ denotes the completion of $A\subset\overline{\Om}$ with respect to the metric $d_\pip$. 
It follows that $\partial{\Om_\pip}=\partial\Om$ and $\Om_\pip$ differs from $\Om$ by one point, which we denote 
by $\infty$, i.e. $\Om_\pip=\Om\cup\{\infty\}$. It is also true that $\overline{\Om_\pip}^\pip$ is compact because 
$\partial\Om$ is bounded. 
 From \cite{GS} and \cite{GKS}, we have the following:

\begin{theorem}\label{th:transformed}
The space $(\Om_\pip, d_\pip)$ is a bounded uniform domain and in addition,
$(\Om_\pip,d_\pip,\mu_\pip)$ is a doubling metric measure supporting a $p$-Poincar\'{e} inequality. 
\end{theorem}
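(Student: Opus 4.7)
The plan is to prove each of the three claims---bounded uniform domain, doubling, and $p$-Poincar\'e inequality---separately by leveraging the structure of the dampening function $\pip(t) = \min\{1, t^{-\beta}\}$ together with the conformal-type deformation results of \cite{BHK, BBS} adapted to the unbounded metric measure setting in \cite{GS, GKS}. The unifying observation is that on any $d$-ball $B(x,r)$ with $r \le \tfrac{1}{2}d_\Om(x)$, the function $\pip \circ d_\Om$ is bi-Lipschitz comparable to the constant $\pip(d_\Om(x))$, so locally the deformation is essentially a uniform rescaling, and all the metric/measure quantities transform in a controlled way.

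First I would show $(\Om_\pip, d_\pip)$ is bounded. Fix $x_0 \in \Om$ with $d_\Om(x_0) =: r_0$. For any $y \in \Om$, the uniform structure of $(\Om, d)$ produces a curve $\gamma$ from $x_0$ to $y$ along which $d_\Om$ grows at most linearly outward and then decays back; cutting the curve into the ``middle fat'' part (where $d_\Om$ is largest) and two ``boundary-hugging'' tails, I would estimate $\ell_\pip(\gamma) \lesssim \int_{r_0}^\infty t^{-\beta}\,dt + \diam(\partial\Om)$, which is finite since $\beta > 1$. Since this is uniform in $y$, adding the single ideal point $\infty$ yields a compact $d_\pip$-completion. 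For uniformity, I would transfer uniform curves of $(\Om,d)$ to uniform curves of $(\Om_\pip, d_\pip)$ using the comparability $d_\pip(z,w) \approx \pip(d_\Om(z))\, d(z,w)$ for points $z,w$ with $d(z,w) \le \tfrac{1}{2}d_\Om(z)$, and concatenating segments on scales $2^k r_0$ to handle curves that pass through the ideal point $\infty$.

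For the doubling property of $\mu_\pip$, the same local comparability of $\pip \circ d_\Om$ lets me relate a $d_\pip$-ball $B_\pip(x, r)$ to a $d$-ball $B_d(x, r/\pip(d_\Om(x)))$ up to bi-Lipschitz constants, which transforms doubling of $\mu$ into doubling of $\mu_\pip$ for balls well inside $\Om$; balls near $\partial\Om$ are handled by $\pip \equiv 1$ there, and balls ``near $\infty$'' use the explicit $\pip(t) = t^{-\beta}$ together with $\beta p > Q_\mu$. For the $p$-Poincar\'e inequality, the key identity is \eqref{D:nabla phi}: if $g$ is a $p$-weak upper gradient of $u$ with respect to $d$, then $g/(\pip \circ d_\Om)$ is a $p$-weak upper gradient with respect to $d_\pip$, and $\int g^p d\mu = \int (g/\pip)^p \pip^p d\mu = \int g_{u,\pip}^p d\mu_\pip$. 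Combined with the dilation correspondence between $d$- and $d_\pip$-balls, the $p$-Poincar\'e inequality for $(\Om, d, \mu)$ on each such ball transfers directly to $(\Om_\pip, d_\pip, \mu_\pip)$.

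The main obstacle is the behavior at the ideal point $\infty$: verifying both doubling and the Poincar\'e inequality uniformly on $d_\pip$-balls that contain $\infty$. Here one must carefully combine the geometric decay of $\pip$ with the lower mass bound exponent condition $\beta p > Q_\mu$, essentially summing geometric series over dyadic annuli $\{x : 2^k r_0 < d_\Om(x) \le 2^{k+1} r_0\}$. This is exactly the setting of \cite{GS, GKS}, whose arguments I would follow; once these dyadic decomposition estimates are in hand, all three conclusions of the theorem follow.
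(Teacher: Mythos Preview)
The paper does not prove this theorem; it is quoted directly from \cite{GS, GKS} (see the sentence immediately preceding the statement: ``From \cite{GS} and \cite{GKS}, we have the following''). Your proposal is precisely a sketch of the arguments in those references---local comparability of $\pip\circ d_\Om$ to a constant on Whitney-type balls, transferring uniform curves and the doubling/Poincar\'e inequalities through the conformal change, and handling the ideal point $\infty$ via dyadic annuli and the condition $\beta p>Q_\mu$---so your approach is correct and is the same as the one the paper invokes by citation.
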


Note that the results of~\cite{GKS} dealt with upper gradient-based $p$-harmonic functions, but
what was done in~\cite{GKS} hold \emph{mutatis mutandis} with the Cheeger derivative as well given the discussion in
Sections~\ref{Sec:Cheegerishness} and \ref{sec:conformal invariance} of the present paper.

From~\cite[equation~(7.1) and Proposition~7.4]{GKS} we have the following.

\begin{lemma}\label{lem:nopiptopip}
Under the hypotheses above,
 we have  
	$D^{1,p}(\Om)=N^{1,p}(\Om_\pip)=N^{1,p}(\overline{\Om_\pip}^\pip)$. 
\end{lemma}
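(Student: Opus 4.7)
The plan is to transfer everything through the conformal-type identities of Sections~\ref{Sec:Cheegerishness}--\ref{sec:conformal invariance}. The minimal $p$-weak upper gradients transform by $g_{u,\pip}\approx \pip(d_\Om)^{-1}g_{u,d}$ and the measures by $d\mu_\pip=\pip(d_\Om)^p\,d\mu$, so the weights cancel and
\[
\int_{\Om_\pip} g_{u,\pip}^p\, d\mu_\pip \;\approx\; \int_\Om g_{u,d}^p\, d\mu.
\]
Consequently the $D^{1,p}$-seminorms in the two geometries are comparable, and the entire content of the lemma reduces to (i) $L^p$-integrability of $u$ with respect to $\mu_\pip$ whenever $u\in D^{1,p}(\Om)$, and (ii) the fact that passing from $\Om_\pip$ to its $d_\pip$-completion does not enlarge the Newton--Sobolev class.

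For (i), I would show $\int_\Om |u|^p\pip(d_\Om)^p\, d\mu<\infty$ via a dyadic annular decomposition around a basepoint $x_0\in\partial\Om$. Since $\partial\Om$ is bounded, on each annulus $A_k=\{2^{k-1}\le d(\cdot,x_0)<2^k\}$ (for $k$ large) one has $d_\Om\approx 2^k$ and hence $\pip(d_\Om)^p\lesssim 2^{-k\beta p}$. A telescoping Poincar\'e/chaining argument along a uniform curve from the reference ball $B_0=B(x_0,1)\cap\overline\Om$ into $A_k$ yields $\int_{A_k}|u-u_{B_0}|^p\,d\mu\lesssim 2^{kp}\mathcal{E}_p(u)^p + |P_k|^p\mu(A_k)$, where $P_k$ is a telescoping sum of averages bounded polynomially in $k$ via a chain of doubling concentric balls. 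Combined with the upper mass bound $\mu(A_k)\lesssim 2^{kQ_\mu}\mu(B_0)$ from iterated doubling (with the same exponent $Q_\mu=\log_2 C_d$ appearing in~\eqref{eq:lower-mass-exp}), and the estimate $|u_{B_0}|^p\int_\Om\pip^p d\mu\lesssim |u_{B_0}|^p\sum_k 2^{-k(\beta p-Q_\mu)}$ for the constant contribution, every resulting geometric series converges precisely because $\beta p>Q_\mu$ (with $\beta>1$). The reverse inclusion $N^{1,p}(\Om_\pip)\subset D^{1,p}(\Om)$ is immediate from the energy identity above: an $L^p(\Om_\pip,\mu_\pip)$-upper gradient transforms into an $L^p(\Om,\mu)$-upper gradient with the same energy.

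For (ii), the completion $\overline{\Om_\pip}^\pip$ adds back precisely the points of $\partial\Om$, and $\partial\Om$ is $\mu_\pip$-null since $\mu(\partial\Om)=0$ by~\eqref{eq:Co-Dim-intro} and $d\mu_\pip=\pip(d_\Om)^p\,d\mu$. Because $(\Om_\pip,d_\pip,\mu_\pip)$ is doubling and supports a $p$-Poincar\'e inequality by Theorem~\ref{th:transformed}, Newton--Sobolev functions admit quasicontinuous representatives whose natural extension across the $\mu_\pip$-null completion set preserves the minimal upper gradient, yielding $N^{1,p}(\Om_\pip)=N^{1,p}(\overline{\Om_\pip}^\pip)$.

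The main obstacle is calibrating the telescoping Poincar\'e estimate so that the $L^p$-decay of $|u|\,\pip(d_\Om)$ becomes summable using only the lower mass bound exponent $Q_\mu$: the chaining argument produces terms of order $2^{k(p-\beta p)}\mathcal{E}_p(u)^p$ and $2^{-k(\beta p-Q_\mu)}|u_{B_0}|^p$, both of which require the exact choice $\beta p > Q_\mu$ (with $\beta>1$) built into the definition of $\pip$. A careful bookkeeping is needed to ensure no extraneous powers of $2^k$ leak out of either the Poincar\'e step or the telescoping over concentric balls.
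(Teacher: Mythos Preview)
Your chain-of-balls estimate for (i) has a gap: the telescoping quantity $P_k=u_{B_k}-u_{B_0}$ is \emph{not} polynomial in $k$ in a general doubling space. The standard estimate gives $|u_{B_j}-u_{B_{j-1}}|\lesssim 2^j\mu(B_j)^{-1/p}\mathcal{E}_p(u)$, and without Ahlfors regularity the only lower bound on $\mu(B_j)$ you can invoke is $\mu(B_j)\ge\mu(B_0)$ (reverse doubling in a connected space contributes an exponent that can be arbitrarily small). Summing yields $|P_k|\lesssim 2^k$, so the cross term $\sum_k 2^{-k\beta p}|P_k|^p\mu(A_k)$ behaves like $\sum_k 2^{-k(\beta p-p-Q_\mu)}$, which converges only if $\beta p>p+Q_\mu$---strictly stronger than the standing hypotheses $\beta>1$ and $\beta p>Q_\mu$. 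Your final bookkeeping lists only the terms $2^{k(p-\beta p)}$ and $2^{-k(\beta p-Q_\mu)}$ and omits precisely this cross term.

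The remedy, which is essentially the content of~\cite[Proposition~7.4]{GKS} that the paper cites, is to work directly in the transformed space rather than chain in the original metric. By Theorem~\ref{th:transformed}, $(\Om_\pip,d_\pip,\mu_\pip)$ is a \emph{bounded} doubling PI space, and your own annular computation for the constant contribution already shows $\mu_\pip(\Om_\pip)<\infty$ under $\beta p>Q_\mu$. A single global $(p,p)$-Poincar\'e inequality on all of $\Om_\pip$ then gives $\int_{\Om_\pip}|u-u_{\Om_\pip}|^p\,d\mu_\pip\lesssim\diam_\pip(\Om_\pip)^p\int_{\Om_\pip}g_{u,\pip}^p\,d\mu_\pip<\infty$, and since constants lie in $L^p(\mu_\pip)$, one gets $u\in L^p(\Om_\pip,\mu_\pip)$ with no chaining required. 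For (ii), your conclusion is correct but the mechanism is the uniform-domain extension property (as in~\cite[Proposition~7.1]{AS}, cf.\ Lemma~\ref{lem:ext2notext}), not quasicontinuity alone: a $\mu_\pip$-null boundary does not by itself guarantee that upper gradients extend across it.
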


Let $1<p<\infty$ and $\nu$ be a Borel regular measure on $\partial\Om$ that is $\Theta$-codimensional to $\mu$ for 
some $0<\Theta<p$. In the next theorem, we will let $Q_\pip^+$ and $Q_\pip^-$ denote the upper and lower mass bound 
exponents ``at infinity" of $\mu_\pip$. That is, $Q_\pip^+$ and $Q_\pip^-$ are the numbers 
satisfying 
\[
\left(\frac{r}{R}\right)^{Q^+_\pip} \gtrsim
\frac{ \mu_\pip(B_{d_\pip}(\infty,r)) }{ \mu_\pip(B_{d_\pip}(\infty,R)) }\gtrsim \left(\frac{r}{R}\right)^{Q^-_\pip}
\]
for all $0<r\leq R <r_0$, where $r_0$ is a threshold radius depending on the uniformity constant $C_U$ of $\Omega$ and 
the parameter $\beta$ from the definition of the function $\pip$.
In the  theorem below,  $T:D^{1,p}(\Omega,\mu)\rightarrow B^{1-\Theta/p}_{p,p}(\partial\Omega,\nu)$ will denote the bounded 
linear trace operator described in Section \ref{s:trace}.

\begin{theorem}\label{thm:bounded-Dirichlet} Denote by $\Theta$ the exponent in \eqref{eq:codim}.
If $\partial\Om$ is bounded, then for any $f\in B^{1-\Theta/p}_{p,p}(\partial\Om,\nu)$ there is a
function $u\in D^{1,p}(\Om,\mu)$ such that
\begin{enumerate}
\item $u$ is $p$-harmonic in $(\Om,d,\mu)$,
\item $Tu=f$ on $\partial\Om$ $\nu$-almost everywhere.
\end{enumerate}
If $p\leq Q_\pip^+$, then the solution $u$ is unique. If $p>Q_\pip^-$, then 
for each solution $u$ of the problem the limit $\lim_{\Om\ni y\to\infty}u(y)$ exists; this limit uniquely determines the solution.
Moreover, there is a unique $u$ satisfying the above two conditions such that for all $v\in D^{1,p}(\Om)$ with
$Tv=0$ $\nu$-a.e. on $\partial\Om$ we have
\[
\int_\Om |\nabla u|^{p-2} \nabla u\cdot \nabla v \, d\mu=0.
\]
\end{theorem}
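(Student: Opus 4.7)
The plan is to reduce everything to the bounded setting via the conformal dampening $\pip$, solve the Dirichlet problem there by the direct method, and then translate the conclusions back. By Theorem~\ref{th:transformed} the transformed space $(\Om_\pip,d_\pip,\mu_\pip)$ is a \emph{bounded} uniform domain equipped with a doubling measure supporting a $p$-Poincar\'e inequality, so the standard PDE toolbox is available. By Lemma~\ref{lem:nopiptopip} we have the identifications
\[
D^{1,p}(\Om,\mu)=N^{1,p}(\Om_\pip,\mu_\pip)=N^{1,p}(\overline{\Om_\pip}^\pip,\mu_\pip),
\]
and by~\eqref{eq:ChEnergy-trans} the $p$-energy is preserved, so Cheeger $p$-harmonicity in $(\Om,d,\mu)$ coincides with Cheeger $p$-harmonicity in $(\Om_\pip,d_\pip,\mu_\pip)$. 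On the boundary, $\partial\Om_\pip=\partial\Om$ as sets and the measure $\nu$ is unchanged.

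For existence, I would use Theorem~\ref{thm-trace-local-est} to extend the datum $f\in B^{1-\Theta/p}_{p,p}(\partial\Om,\nu)$ to a function $F\in D^{1,p}(\Om,\mu)$ with $TF=f$ $\nu$-a.e., and then solve the variational problem
\[
\min\Bigl\{\int_{\Om_\pip}|\nabla_\pip v|_\pip^{p}\,d\mu_\pip\;:\;v\in N^{1,p}(\overline{\Om_\pip}^\pip,\mu_\pip),\ v-F\in N^{1,p}_{0}(\Om_\pip,\mu_\pip)\Bigr\},
\]
understood in the sense that $T v=f$ $\nu$-a.e.\ on $\partial\Om$. The admissible class is nonempty (it contains $F$), convex, and closed in $N^{1,p}$; the energy is strictly convex, coercive, and weakly lower semicontinuous. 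The direct method yields a minimizer $u$, and the Euler--Lagrange equation is exactly the last identity in the statement, which by the usual approximation of compactly supported test functions (valid in a bounded PI domain) shows that $u$ is Cheeger $p$-harmonic in $\Om_\pip$, hence in $\Om$ by the invariance recalled above.

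For uniqueness, I would compare two solutions $u_1,u_2$: the difference $w=u_1-u_2$ lies in $N^{1,p}(\overline{\Om_\pip}^\pip,\mu_\pip)$ and has trace zero on $\partial\Om$; using the admissible test function $w$ in the weak formulation gives $\int |\nabla_\pip w|_\pip^{p}\,d\mu_\pip=0$, so $w$ is constant on $\Om_\pip=\Om\cup\{\infty\}$. The condition $p\le Q_\pip^+$ ensures that the singleton $\{\infty\}$ has zero $p$-capacity in $(\overline{\Om_\pip}^\pip,d_\pip,\mu_\pip)$, so zero trace on $\partial\Om$ forces $w\equiv 0$. When $p>Q_\pip^-$, the singleton $\{\infty\}$ has \emph{positive} $p$-capacity, which on the one hand gives that each Newton--Sobolev function admits a well-defined quasicontinuous value at $\infty$, namely the limit $\lim_{\Om\ni y\to\infty}u(y)$, and on the other hand turns $\{\infty\}$ into an additional boundary point at which data must be prescribed; specifying this limit fixes the additive constant and yields uniqueness. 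The main technical obstacle is precisely the dichotomy at the extra boundary point $\{\infty\}$: one must use the capacitary behavior of singletons in the transformed doubling PI space, together with the quasicontinuity of Newton--Sobolev functions, to know in which regime the trace on $\partial\Om$ alone determines the solution. All of this reduces to results already established in \cite{GS, GKS}, which is why it suffices in the paper to recall the statement.
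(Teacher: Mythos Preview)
Your reading is correct: the paper does not prove this theorem but quotes it from~\cite{GS,GKS}, and your sketch is precisely the strategy of those references---transform to the bounded space $(\Om_\pip,d_\pip,\mu_\pip)$, solve by the direct method there, and handle the point $\infty$ via its $p$-capacity. One small correction: in your uniqueness step, testing the weak formulations for $u_1$ and $u_2$ with $w=u_1-u_2$ does not literally give $\int_{\Om_\pip}|\nabla_\pip w|_\pip^{p}\,d\mu_\pip=0$; you get $\int_{\Om}\bigl(|\nabla u_1|^{p-2}\nabla u_1-|\nabla u_2|^{p-2}\nabla u_2\bigr)\cdot\nabla w\,d\mu=0$, and then the monotonicity inequality~\eqref{eqn:monotonicity} forces $\nabla w=0$ a.e., hence $w$ constant---the conclusion is the same, but the intermediate equality you wrote is only correct when $p=2$.
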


\begin{remark} The main point of the last claim of the above theorem is that the function $v$ need not have 
compact support in $\Om$, that is, it need not vanish at $\infty$.
The distinct behavior corresponding to $p\le Q_\pip^+$ versus $p>Q_\pip^-$ is due to the $p$-parabolicity versus $p$-hyperbolicity
of $\Om$ at $\infty$.

If $u\in N^{1,p}(X)$, then necessarily $u\in L^p(X)$, and so effectively one can think of $u$ as vanishing at $\infty$ if $X$ is
unbounded with $\mu(X)=\infty$.
Functions in $D^{1,p}(X)$ have a more ambiguous behavior at $\infty$. Indeed, the results of~\cite{GKS} tell us that 
the behavior depends on whether $X$ is $p$-parabolic (in which case we can think of $\infty$ as having zero $p$-capacity)
or is $p$-hyperbolic, see for example~\cite[Proposition~7.6]{GKS}. The setting of~\cite{GKS} is that of an unbounded uniform
domain, but with a bounded boundary. In our setting, where the boundary itself is also unbounded, the discussion of~\cite{GKS} 
is not directly applicable. For this reason, we 
give a more self-contained discussion on this
issue here, as the topic plays a vital role in our paper.
\end{remark}

\begin{definition}
We fix a ball $B(x_0,R)\subset X$, and for each positive integer $n$ let $\Gamma_n$ be the collection of all locally rectifiable
curves in $X$ that intersect both $B(x_0,R)$ and $X\setminus B(x_0,nR)$. Following~\cite{HoloKos, HoSh} we say that
$X$ is $p$-parabolic if $\lim_{n\to\infty}\Mod_p(\Gamma_n)=0$, and $p$-hyperbolic if $\lim_{n\to\infty}\Mod_p(\Gamma_n)>0$.
As $\Gamma_n\subset\Gamma_m$ when $m<n$, the monotonicity of $p$-modulus of families of curves implies that
the above limit exists. The argument in~\cite{HoSh} shows that the property of $p$-parabolicity versus $p$-hyperbolicity
is independent of the choice of the all $B(x_0,R)$. 
\end{definition}

 Recall that in this subsection $\partial\Om$ is bounded.
If $\overline{\Om}$ is $p$-parabolic, then by~\cite{Hansevi} we know that a solution to 
the Dirichlet problem on $\Om$ exists
and is unique; however, when $\overline{\Om}$ is not $p$-parabolic, we may still have a solution in the class $D^{1,p}(\Om)$, but
the solution is not unique, see the discussion in~\cite[Section~9]{GKS}. We will see in this paper that a similar statement
holds when both $\Om$ and $\partial\Om$ are unbounded, with $\Om$ a uniform domain equipped with a doubling measure
$\mu$ supporting a $p$-Poincar\'e inequality.

\begin{remark}\label{rem:cap-of-infty}
If $\overline{\Om}$ is $p$-parabolic, then functions $v\in D^{1,p}(\Om)$ need not be well-defined at $\infty$, but 
when $\overline{\Om}$ is $p$-hyperbolic, $v(\infty)$ is well-defined. Indeed, thanks to the analogue of~\eqref{eq:ChEnergy-trans}
for minimal $p$-weak upper gradients of functions $v\in D^{1,p}(\Om)$, we know that 
$v\in N^{1,p}(\Om_{k},d_\pip,\mu_{\pip,k})$ for any
positive integer $k$; 
hence by the results of~\cite{GKS} we know that when $\overline{\Om}$ is $p$-hyperbolic, 
then $\text{Cap}_p^\pip(\{\infty\})>0$ and so $v(\infty)$ is well-defined, with 
\[
v(\infty)=\lim_{r\to0^+}\frac{1}{\mu_{\pip_{k}}(B_{\pip_{k}}(\infty, r))}\int_{B_{\pip_{k}}(\infty, r)\cap\Om}v\, d\mu_{\pip_{k}}.
\]
Recall that in the setting of~\cite{GKS}, we always have $\Mod_p^{\pip_{k}}(\Gamma)=\Mod_p(\Gamma)$ for each
family $\Gamma$ of locally rectifiable curves in $\Om$.
\end{remark}

\subsection{Uniform boundedness principle}
To conclude this background section, 
we state one of the key results that we will need in the latter half of this 
{\it the uniform boundedness principle}, also known as the
Banach--Steinhaus theorem. The version we need is the following.

\begin{theorem}[Banach--Steinhaus Theorem]\label{Thm:UnifBdd}
Let $V$ be a Banach space and $\mathcal{F}$ a family of bounded linear operators from $V$ to $\R$. If for each
$v\in V$ we have that 
\[
\sup_{L\in\mathcal{F}}|L(v)|<\infty,
\]
then 
\[
\sup_{L\in\mathcal{F}}\Vert L\Vert_{V^*}<\infty.
\]
Here, for $L\in\mathcal{F}$, the norm $\Vert L\Vert_{V^*}$ is the operator norm on $L$
given by
\[
\|L\|_{V^*}:=\sup_{v\in V\, \|v\|=1}\ |L(v)|.
\]
\end{theorem}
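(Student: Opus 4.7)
The plan is to prove this classical result via the Baire Category Theorem, which is the standard route for the uniform boundedness principle in Banach spaces. The strategy exploits completeness of $V$ to locate a non-meager closed set and then translate a uniform bound on that set into a uniform bound on the operator norms.

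First I would introduce, for each positive integer $n$, the set
\[
E_n := \bigl\{ v \in V \, : \, \sup_{L \in \mathcal{F}} |L(v)| \le n \bigr\}.
\]
Each $E_n$ is closed: writing $E_n = \bigcap_{L \in \mathcal{F}} \{ v \in V : |L(v)| \le n\}$ and noting that each set in the intersection is closed by continuity of $L$ (every bounded linear functional is continuous), the intersection of closed sets is closed. The pointwise boundedness hypothesis gives $V = \bigcup_{n \in \N} E_n$, so $V$ is the countable union of closed sets.

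Next I would invoke the Baire Category Theorem: since $V$ is a complete metric space, it cannot be written as a countable union of nowhere dense sets, so at least one $E_{n_0}$ must have non-empty interior. Hence there exist $v_0 \in V$ and $r > 0$ such that the closed ball $\overline{B}(v_0, r) \subset E_{n_0}$; that is, $|L(v_0 + w)| \le n_0$ for every $L \in \mathcal{F}$ and every $w \in V$ with $\|w\| \le r$.

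Finally, I would deduce the uniform operator bound by a symmetrization argument. For any unit vector $v \in V$, the vector $rv$ has norm $r$, so both $v_0 + rv$ and $v_0$ lie in $\overline{B}(v_0, r)$. By linearity of $L$,
\[
|L(v)| = \frac{1}{r} |L(v_0 + rv) - L(v_0)| \le \frac{1}{r} \bigl( |L(v_0 + rv)| + |L(v_0)| \bigr) \le \frac{2 n_0}{r}.
\]
Taking the supremum over unit vectors $v$ gives $\|L\|_{V^*} \le 2n_0/r$, and since this bound is independent of $L \in \mathcal{F}$, taking the supremum over the family yields $\sup_{L \in \mathcal{F}} \|L\|_{V^*} \le 2n_0/r < \infty$, as desired. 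The only nontrivial ingredient is the Baire Category Theorem; everything else is linearity and continuity bookkeeping, so there is no real obstacle beyond invoking completeness of $V$ at the correct moment.
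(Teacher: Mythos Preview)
Your proof is correct and is the standard Baire Category argument for Banach--Steinhaus. Note, however, that the paper does not actually prove this theorem: it is stated in the background section as a classical result to be invoked later, with no proof supplied. So there is nothing to compare against---you have simply filled in the omitted classical proof, and done so correctly.
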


\section{First Step: Dirichlet problem on domains with unbounded boundary}\label{Sec.Dirichlet}

The goal of this section is to prove the following theorem regarding the Dirichlet problem on $\Om$ with 
unbounded boundary $\partial\Om$, under assumptions (H0), (H1) and (H2).

\begin{theorem}\label{thm:unbounded-Dirichlet} Denote by $\Theta$ the exponent in \eqref{eq:codim} and set $\theta=1-\Theta/p$.
Let $F\in HB^{\theta}_{p,p}(\partial\Om)$. Then
there exists a unique function $u\in D^{1,p}(\Om,\mu)$ such that $u$ is $p$-harmonic in $\Om$ and $Tu=F$ 
$\nu$-almost everywhere on $\partial\Om$, and in addition for every $v\in D^{1,p}(\Om)$ with $Tv=0$ we have
\[
\int_\Om |\nabla u|^{p-2} \nabla u \cdot \nabla v \, d\mu=0.
\]
\end{theorem}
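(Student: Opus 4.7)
The strategy is the direct method of the calculus of variations. Using the extension operator $E$ from Theorem~\ref{thm-trace-local-est}, fix $U_0 := EF \in D^{1,p}(\Om,\mu)$ with $TU_0 = F$ $\nu$-a.e.\ on $\partial\Om$, and recast the problem as finding
\[
\min \Big\{ I(u) := \int_\Om |\nabla u|^p\, d\mu \;:\; u \in U_0 + \ker T \Big\},
\]
where $\ker T = \{v\in D^{1,p}(\Om): Tv = 0 \text{ $\nu$-a.e.}\}$ is closed in $D^{1,p}(\Om)$ by boundedness of $T$. Any minimizer $u$ automatically has $Tu = F$, and perturbing $u \mapsto u + tv$ along $v \in \ker T$ and differentiating at $t=0$ yields the Euler--Lagrange identity
\[
\int_\Om |\nabla u|^{p-2}\, \nabla u\cdot \nabla v\, d\mu = 0 \qquad \text{for all } v \in \ker T,
\]
which encapsulates both the weak $p$-harmonicity on $\Om$ (take $v$ with compact support in $\Om$) and the last clause of the theorem.

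For existence I would take a minimizing sequence $(u_n) \subset U_0 + \ker T$. The bound $I(u_n)\le I(U_0)$ uniformly controls the $p$-energies $\mathcal{E}_p(u_n)$. Passing to the quotient $D^{1,p}(\Om)/\R$, which is reflexive via the isometric gradient embedding $u\mapsto\nabla u$ into $L^p(\Om,\mu)^N$ afforded by the Cheeger structure of Section~\ref{Sec:Cheegerishness}, I would normalize each $u_n$ by subtracting its mean over a fixed bounded subset of $\Om$ (invoking the $(p,p)$-Poincar\'e inequality of~(H1) to convert the energy bound into an $L^p_{\loc}$ bound) and extract a subsequence with $u_n \rightharpoonup u$ in $D^{1,p}(\Om)$. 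Weak continuity of $T$ gives $Tu = F$, while weak lower semicontinuity of the convex functional $I$ gives $I(u) \le \liminf_n I(u_n) = \inf_{U_0 + \ker T} I$, so $u$ is the desired minimizer.

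Uniqueness follows from strict convexity: if $u, w$ are two solutions then $\tfrac{u+w}{2} \in U_0 + \ker T$ and
\[
\inf I \le I\Big(\tfrac{u+w}{2}\Big) \le \tfrac{I(u) + I(w)}{2} = \inf I,
\]
so strict convexity of $t\mapsto|t|^p$ in the integrand forces $\nabla u = \nabla w$ $\mu$-a.e.\ in $\Om$; connectedness of $\Om$ then makes $u - w$ a constant, and $T(u-w) = 0$ $\nu$-a.e.\ forces that constant to vanish. The principal obstacle is the weak-compactness step: since $\partial\Om$ is unbounded, Lemma~\ref{lem:nopiptopip} and the accompanying $\pip$-uniformization that produced the compact space $\overline{\Om_\pip}^\pip$ in the bounded-boundary setting are not directly available. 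I would therefore either establish reflexivity of $D^{1,p}(\Om)/\R$ intrinsically through the gradient embedding, or alternatively exhaust $\Om$ by the bounded-boundary domains $\Om_k$ of Section~\ref{Sec:strategy}, solve each approximate Dirichlet problem via Theorem~\ref{thm:bounded-Dirichlet} with appropriately truncated boundary data, and pass to the limit along a diagonal/Mazur sequence using the uniform energy bound $I(u_k) \le I(U_0)$ together with the Poincar\'e inequality on each ball $2^kB_0$ to localize the extraction of a weak limit.
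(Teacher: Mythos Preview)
Your proposal is correct, and your primary route (the direct method) is in fact more streamlined than what the paper does. The paper takes your alternative route: it solves the Dirichlet problem on each $\Om_k$ via Theorem~\ref{thm:bounded-Dirichlet} (with boundary data $F\vert_{\partial\Om_k}$, so restricted rather than truncated), obtains the uniform energy bound $\int_\Om|\nabla u_k|^p\,d\mu\le\int_\Om|\nabla EF|^p\,d\mu$, and then extracts a limit by embedding everything into the \emph{bounded} space $N^{1,p}(\overline{\Om_{k_0,\pip}}^\pip)$ via Lemma~\ref{lem:nopiptopip} applied to $\Om_{k_0}$, using Maz'ya's capacitary inequality to control $\|u_k\|_{L^p(\mu_{k_0,\pip})}$. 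A delicate point in the paper's argument---which your direct method sidesteps entirely---is verifying that the Euler--Lagrange identity for the limit holds for test functions $v$ that do not vanish at the added point $\infty\in\Om_{k_0,\pip}$; the paper upgrades weak convergence of $\nabla u_{m_j}$ to strong convergence (via monotonicity of the energies $\int_\Om|\nabla u_m|^p\,d\mu$ in $m$ together with matching of the limit norm) to push this through. In your direct approach this issue never arises, since minimizing over the full affine space $U_0+\ker T$ immediately yields the Euler--Lagrange equation for every $v\in\ker T$.

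The one place your sketch is thin is the reflexivity of $D^{1,p}(\Om)/\R$: the gradient embedding into $L^p(\Om;\R^N)$ is isometric up to constants, but reflexivity requires its image to be \emph{closed}, and this uses the Poincar\'e inequality on $\overline\Om$ rather than the Cheeger structure alone---essentially the argument of~\cite[Theorem~10]{FHK}, which the paper itself invokes later in the proof of Proposition~\ref{lemma:convergence}. Your uniqueness argument via strict convexity is equivalent to the paper's monotonicity computation in Section~4.3.
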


Note again that we do not impose any condition on the test functions $v$ at $\infty$ in the above theorem.

To exhibit such a solution and to show its uniqueness, we proceed by first solving a related problem on a sequence of
domains $\Om_k$ whose  boundary is bounded, and which approximate $\Omega$ as $k\to \infty$. In 
this way for each $k\in \N$ we have a unique solution from the results 
in~\cite{GKS}. We then complete the construction by taking the limit as $k\to\infty$. 
\subsection{Dirichlet problem on $\Om_k$}
{For the convenience of the reader, now
we recall part of the discussion from Section~\ref{Sec:strategy} here.}
Fix a ball $B_0$ and, for $k$ a positive integer, consider $B_k:=2^kB_0$. Define 
$\Om_k:=\Om\cup\{\partial\Om\setminus\overline{B_k}\}$. Note that 
$\Om\subset\Om_k\subset\overline{\Om}$. We set a metric $d_k$ on $\Omega_k$ 
to be the extension of $d$ to $\overline{\Om}$ (which we continue to denote by $d$) 
restricted to $\Omega_k$; the measure $\mu_k$ on $\Omega_k$ is defined, 
for $A\subset\Omega_k$ to be $\mu_k(A)=\mu(A\cap\Om)$.

\begin{lemma}\label{L4.2}
With $\Om$ a $C_U$-uniform domain, let $A\subset\partial\Om$ be a relatively open set. Then
$\Om(A):=\Om\cup A$ is a uniform domain in its completion $\overline{\Omega(A)}=\overline{\Omega}$ with 
$(\Omega(A),d_A,\mu_A)$ a doubling metric measure space supporting a $p$-Poincar\'{e} inequality, where $d_A$ is the restriction
of the extension of $d$ to $\overline{\Om}$ back to $\Om(A)$, and $\mu_A$ is the measure given by
$\mu_A(E)=\mu(E\cap\Om)$ whenever $E\subset\Om(A)$.
\end{lemma}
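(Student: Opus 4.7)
The plan is to verify the three conclusions (uniformity, doubling, $p$-Poincaré) in that order, after some topological preliminaries. As a first step, I would observe that $\Om(A)$ is open in $\overline\Om$: points of $\Om$ have open neighborhoods in $\Om$ itself, while for $x\in A$ the relative openness of $A$ in $\partial\Om$ provides an open $V\subset\overline\Om$ with $V\cap\partial\Om\subset A$, so that $V\subset\Om\cup A=\Om(A)$. Since $\Om$ is dense in $\overline\Om$, the metric completion of $(\Om(A),d_A)$ is $\overline\Om$, the metric boundary satisfies $\partial\Om(A)=\overline\Om\setminus\Om(A)=\partial\Om\setminus A\subset\partial\Om$, and in particular $d(z,\partial\Om(A))\ge d(z,\partial\Om)$ for every $z\in\overline\Om$.

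For the uniformity of $\Om(A)$, when both endpoints $x,y$ lie in $\Om$ the $C_U$-uniformity of $\Om$ yields a curve $\gamma\subset\Om\subset\Om(A)$ with the required length bound, and the turn-around condition with respect to $\partial\Om(A)$ follows at once from the inclusion $\partial\Om(A)\subset\partial\Om$. When one or both endpoints lie in $A$, I would approximate $x\in A$ (and, if needed, $y\in A$) by $x_n\in\Om$ with $d(x_n,x)\to 0$ and invoke uniformity of $\Om$ to produce curves $\gamma_n\subset\Om$ from $x_n$ to $y$ with $\ell(\gamma_n)\le C_U\, d(x_n,y)$. Reparametrizing by arc-length and applying Arzelà--Ascoli in the complete space $\overline\Om$ extracts a limit curve $\gamma$ from $x$ to $y$. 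The crucial step is to check that $\gamma\subset\Om(A)$: for any $z\in\gamma$ distinct from both endpoints, $z=\lim z_n$ with $z_n\in\gamma_n\subset\Om$, and the pointwise limit of the uniformity inequalities along $\gamma_n$ gives $d(z,\partial\Om)\ge C_U^{-1}\min\{\ell(\gamma_{x,z}),\ell(\gamma_{z,y})\}>0$, forcing $z\in\Om$. Passing to the limit in the uniformity inequalities for $\gamma_n$ and using $d(z,\partial\Om(A))\ge d(z,\partial\Om)$ then yields uniformity of $\gamma$ with a constant depending only on $C_U$.

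The doubling of $\mu_A$ is immediate once one uses the remark following (H2) that $\mu(\partial\Om)=0$: for $x\in\Om(A)$ and $r>0$ one has $B_{d_A}(x,r)=B_d(x,r)\cap\Om(A)$, hence
\[
\mu_A(B_{d_A}(x,r))=\mu(B_d(x,r)\cap\Om)=\mu\vert_{\overline\Om}(B_d(x,r)\cap\overline\Om),
\]
so the doubling constant of $\mu\vert_{\overline\Om}$ from (H1) carries over verbatim. For the $p$-Poincaré inequality, I would exploit the fact, just established, that $\Om(A)$ is a uniform open subdomain of the doubling $p$-PI space $(\overline\Om,d,\mu\vert_{\overline\Om})$. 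Combining the identity $\mu_A=\mu\vert_{\overline\Om}$ on $\Om(A)$ with a Boman/Whitney-chain argument (as in~\cite{HKST}) transfers the $p$-PI from $\overline\Om$ to $\Om(A)$: balls in $\Om(A)$ can be chained inside $\Om$ through Whitney-type balls whose dilations stay away from $\partial\Om(A)$ by uniformity, and any upper gradient $g$ of $u$ on $\Om(A)$ integrates against the chain exactly as it would against the chain in $\overline\Om$ since $\mu(\partial\Om\setminus A)=0$.

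The main obstacle I anticipate is the Poincaré step: the uniformity argument is a clean Arzelà--Ascoli limit and the doubling is essentially automatic, but the PI requires a delicate transfer of upper-gradient information across the closed null set $\partial\Om\setminus A$, since a priori a curve in $\overline\Om$ can cross that set on a subset of zero arc-length and such crossings interact non-trivially with upper gradients. Routing the argument through the chain/Whitney decomposition produced by the uniformity of $\Om(A)$ in $\overline\Om$ sidesteps this issue and is where the bulk of the technical work will sit.
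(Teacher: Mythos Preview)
Your proposal is correct, but it works harder than necessary in two places where the paper's proof is considerably shorter.

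For uniformity, the paper simply uses that a $C_U$-uniform domain provides $C_U$-uniform curves between any two points of $\overline\Om$ (the interior of such a curve lies in $\Om$); this is a standard fact, and it replaces your Arzel\`a--Ascoli limit argument entirely. Once you have such a curve $\gamma\subset\Om\cup\{x,y\}\subset\Om(A)$, the inclusion $\partial\Om(A)\subset\partial\Om$ gives the twist condition immediately, exactly as you observed.

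For the Poincar\'e inequality, the paper avoids any chain or Whitney machinery. The key observation you are missing is that since $\Om\subset\Om(A)$, any ($p$-weak) upper gradient $g$ of $u$ on $\Om(A)$ is already an upper gradient of $u$ on $\Om$, and since $\mu_A(A)=0$ all integrals over $B_A(x,r)$ equal integrals over $B_A(x,r)\cap\Om$. Now pick $y\in\Om$ with $d(x,y)<r/2$; the ball inclusions $B(y,r/2)\subset B_A(x,r)\cap\Om\subset B(y,2r)\subset B(y,2\lambda r)\subset B_A(x,3\lambda r)\cap\Om$ combined with doubling let you replace the average over $B_A(x,r)$ by an average over $B(y,2r)$, apply the $p$-Poincar\'e inequality \emph{on $\Om$}, and then absorb the right-hand side back into a ball in $\Om(A)$. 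This same ``shift to a nearby interior point'' trick is also how the paper proves doubling. Your anticipated main obstacle therefore evaporates: there is no need to route curves through $\partial\Om\setminus A$ or to worry about upper-gradient behavior across that set, because the Poincar\'e inequality on $\Om$ is applied directly.
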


\begin{proof}
	By construction, the boundary of $\Omega(A)$ is $\partial\Om\setminus A$. Note that by the definition of $\mu_A$, we 
have that $\mu_A(A)=0$. Hence, if $B_A(x,r)$ denotes the ball in $\Omega(A)$ with respect to the metric $d_A$ and 
$B(x,r)$ denotes the ball in $\Om$ with respect to the metric $d$, then they differ by $\mu_A$-measure zero.
	
	Let $x,y\in \Om(A)$. As $\Om(A)\subset\overline\Om$, there exists a curve $\gamma$ with end points $x$ and $y$ 
that is uniform in $(\Om,d)$. As $d_A$ agrees with $d$ on $\Om(A)$, we have that $d_A(x,y)=d(x,y)$ and that $\ell_A=\ell$, 
where $\ell_A$ denotes the length with respect to the metric $d_A$. Also, as $\partial{\Om(A)}\subset\partial\Om$, we have 
that $d_\Om(x)\leq d_{\Om(A)}(x)$ whenever $x\in\Om$; on the other hand, if $x\in\Om(A)\setminus\Om\subset\partial\Om$, 
then $d_{\Om}(x)=0\leq d_{\Om(A)}(x)$. In either case, this means that for any $z\in\gamma$,
\[
\min\{\ell_A(\gamma_{z,x}),\ell_A(\gamma_{z,y})\}
=\min\{\ell(\gamma_{x,z}),\ell(\gamma_{z,y})\}\le C_U\,d_{\Om}(z)\leq C_U\,d_{\Om(A)}(z)
\]
and 
\[
	\ell_A(\gamma)=\ell(\gamma)\le C_U d(x,y)=C_Ud_A(x,y),
\]
	which is to say that $\gamma$ is a uniform curve in $(\Om(A),d_A)$. Therefore, $(\Om(A),d_A)$ is uniform.
	
	Let $x\in\Om(A)$ and $r>0$. 
Then there exists $y\in\Om$ such that $d(x,y)<r/2$, and so 
\[
	B(y,r/2)\subset B_A(x,r)\cap\Omega\subset B_A(x,2r)\cap\Omega\subset B(y,3r).
\]
	From the doubling property of $\mu$, it follows that 
	\begin{align*}
	\mu_A(B_A(x,2r))=\mu(B_A(x,2r)\cap\Om)&\leq \mu(B(y,3r))\\&\leq C \mu(B(y,r/2))\leq C\,\mu(B_A(x,r)\cap\Omega). 
	\end{align*}
	Therefore, $\mu_A$ is doubling.
	
	Let $u\in D^{1,p}(\Om(A))$ with $g$ as a $p$-weak upper gradient on $\Om(A)$. As $\Om\subset\Om(A)$, 
	it follows that $u\in D^{1,p}(\Om)$ with $g$ a $p$-weak upper gradient on $\Om$. For $x\in \Om(A)$ and $r>0$, 
again choose $y\in\Om$ such that $d(x,y)<r/2$. Then, 
	$$
	B(y,r/2)\subset B_A(x,r)\cap\Om\subset B(y,2r)\subset B(y,2\lambda r)\subset B_A(x,3\lambda r)\cap\Om\,,
	$$
	and so the doubling of both $\mu$ and $\mu_A$, and the Poincar\'{e} inequality on $\Om$ imply that
	\begin{align*}
		\vint_{B_A(x,r)}\!|u-u_{B_A(x,r)}|\,d\mu_A&\approx \inf_{c\in\R}\vint_{B_A(x,r)}\!|u-c|\,d\mu_A\\
		&=\inf_{c\in\R}\vint_{B_A(x,r)\cap\Om}\!|u-c|\,d\mu\\
		&\lesssim \inf_{c\in\R}\vint_{B(y,2r)}\!|u-c|\,d\mu \lesssim r\left(\vint_{B( y, 2\lambda r)}g^p\,d\mu\right)^{1/p}\\
		&\lesssim r\left(\vint_{B_A( x, 3\lambda r)}g^p\,d\mu_A\right)^{1/p}.
	\end{align*}
	Therefore, $(\Om(A),d_A,\mu_A)$ satisfies a $p$-Poincar\'{e} inequality. 
\end{proof}

\begin{lemma}\label{lem:ext2notext}
$D^{1,p}(\Om)=D^{1,p}(\Om_k)=D^{1,p}(\overline{\Om})$. Moreover, for $w\in D^{1,p}(\Om_k)$ we have
\[
\int_\Om|\nabla w|^p\, d\mu =\int_{\Om_k}|\nabla w|^p\, d\mu=\int_{\overline{\Om}}|\nabla w|^p\, d\mu.
\]
\end{lemma}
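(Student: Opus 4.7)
\emph{Plan of proof.} The inclusions $D^{1,p}(\overline\Om)\subseteq D^{1,p}(\Om_k)\subseteq D^{1,p}(\Om)$ are immediate by restriction, since the restriction to $\Om$ of any $p$-weak upper gradient on $\overline\Om$ or $\Om_k$ remains a $p$-weak upper gradient on $\Om$. Moreover, the remark following (H2) gives $\mu(\partial\Om)=0$, so the set differences $\Om_k\setminus\Om$ and $\overline\Om\setminus\Om$, both contained in $\partial\Om$, carry no $\mu$-mass. Consequently, once the three spaces are identified as \emph{sets} with the same minimal $p$-weak upper gradient, the three displayed $p$-energies coincide automatically. The substance of the lemma is therefore the reverse set-theoretic inclusions.

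To extend a given $w\in D^{1,p}(\Om)$ to $\overline\Om$, I would set $\tilde w:=w$ on $\Om$ and $\tilde w:=Tw$ $\nu$-a.e.\ on $\partial\Om$, where $T$ is the trace operator from Theorem~\ref{thm-trace-local-est}, and take $\tilde g_w:=g_w$ on $\Om$ and $\tilde g_w:=0$ on $\partial\Om$ as the candidate upper gradient. The first reduction is that the family $\Gamma_0$ of curves in $\overline\Om$ spending positive arc length on $\partial\Om$ has $p$-modulus zero: the function $\rho=\infty\cdot\chi_{\partial\Om}$ is admissible for $\Gamma_0$ and satisfies $\int_{\overline\Om}\rho^p\,d\mu=0$, because $\mu(\partial\Om)=0$. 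Hence it is enough to verify the upper gradient inequality for $\tilde w$ along curves that meet $\partial\Om$ only in a set of zero arc-length measure, so essentially at the endpoints. For curves with endpoints and interior in $\Om$, the inequality is inherited from the fact that $g_w$ is a $p$-weak upper gradient on $\Om$, with the uniform domain structure (Lemma~\ref{L4.2}) used to approximate boundary-touching curves by interior ones; for curves with one or both endpoints on $\partial\Om$, one passes to the limit via the averaging characterization of $Tw$ from Theorem~\ref{thm-trace-local-est}. The same argument, applied with $\overline\Om$ replaced by $\Om_k$, yields the identification with $D^{1,p}(\Om_k)$.

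\emph{Main obstacle.} The delicate step is the limiting argument at boundary endpoints: for $p$-a.e.\ curve $\gamma$ arriving at $\xi\in\partial\Om$, the values of $w$ along $\gamma$ must converge to $Tw(\xi)$. This is not direct from the definition of the trace, which is given in terms of ball-averages; it requires a Fuglede-type selection of a full-$p$-modulus subfamily of curves along which $w$ is absolutely continuous with respect to $g_w$ and has the prescribed limiting value, combined with the uniform-domain corkscrew-type access to $\xi$ from within $\Om$ afforded by (H0) and Lemma~\ref{L4.2}. Once this limiting identity is secured, the upper gradient inequality for $\tilde w$ on $\overline\Om$ holds along $p$-almost every curve, whence $\tilde w\in D^{1,p}(\overline\Om)$ with minimal $p$-weak upper gradient agreeing $\mu$-a.e.\ with $g_w$, and the claimed equality of spaces and energies follows.
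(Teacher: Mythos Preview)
Your plan is correct in outline, and the paper makes the same initial reduction (restriction gives the easy inclusions, $\mu(\partial\Om)=0$ makes the three energies agree, and the substance is the extension $D^{1,p}(\Om)\to D^{1,p}(\overline\Om)$). However, the paper does not carry out the extension argument: it simply invokes \cite[Proposition~7.1]{AS}, which supplies exactly this extension under the standing hypotheses. So your sketch is, in effect, an attempt to reconstruct that cited proposition.

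Two comments on the sketch itself. First, the clause ``so essentially at the endpoints'' is inaccurate: a curve meeting $\partial\Om$ on a set of arc-length zero may still intersect $\partial\Om$ at infinitely many \emph{interior} parameter values (even on a Cantor set), so the dichotomy you set up does not cover all cases, and ``approximating boundary-touching curves by interior ones via uniformity'' is not how this gap is closed. The argument underlying \cite{AS} avoids the trace operator altogether: for $p$-a.e.\ curve $\gamma$ in $\overline\Om$ one has $\int_\gamma g_w\,ds<\infty$ and $\mathcal L^1(\gamma^{-1}(\partial\Om))=0$; on each component of the open full-measure set $\gamma^{-1}(\Om)$ the function $w\circ\gamma$ is absolutely continuous with $|(w\circ\gamma)'|\le g_w\circ\gamma$, and integrability of the right-hand side forces $w\circ\gamma$ to extend to an absolutely continuous function on the entire parameter interval. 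This single step simultaneously defines the boundary values along $\gamma$ and verifies the upper-gradient inequality there; the Poincar\'e inequality on $\overline\Om$ (hypothesis~(H1)) then handles the consistency of the curve-wise extension that you flag as the main obstacle. Second, routing the extension through Theorem~\ref{thm-trace-local-est} is heavier than necessary and somewhat backwards in the logical hierarchy: the trace and extension theorems in \cite{GS2} already sit at a comparable depth to the statement you are trying to prove, whereas \cite[Proposition~7.1]{AS} is a more primitive ingredient.
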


\begin{proof}
Since $\Om\subset \overline{\Om}$, it is immediate that the restriction of a $D^{1,p}(\overline{\Om})$-function 
to $\Om$ is in $D^{1,p}(\Om)$. As $\Om\subset\Om_k\subset \overline{\Om}$, the result will follow if we can 
show that, conversely, functions in $D^{1,p}(\Om)$ have an extension to $D^{1,p}(\overline{\Om})$. 
The latter holds in view of~\cite[Proposition~7.1]{AS}.
\end{proof}

In this paper, for each positive integer $k$ we apply the above lemma with the choice of $A=\partial\Om\setminus \overline{B_k}$.
Recall that in this case, $\Om(A)=\Om_k$, and note that $\partial\Om_k=\overline{B_k}\cap\partial\Om$ is bounded.
This lemma allows us to invoke Theorem~\ref{thm:bounded-Dirichlet} to obtain a unique 
function $u_k\in D^{1,p}(\Om_k)$ such that $u_k$ is $p$-harmonic on 
$\Om_{k,\pip}=\Om_k\cup\{\infty\}$ and $Tu_k=F|_{\partial\Om_k}$, and so that for all $v\in D^{1,p}(\Om_k)$ such that 
$Tv=0$ $\nu$-a.e. on $\partial\Om_k$ we have
\begin{equation}\label{eq:weak-k}
\int_{\Om_k} |\nabla u_k|^{p-2} \nabla u_k \cdot \nabla v\, d\mu=0.
\end{equation}
Note that $EF\in D^{1,p}(\Om)$, where $E$ is the bounded linear extension operator associated with $\Om$
as constructed in~\cite[Theorem~1.1]{GS2}. Observe that the trace operator 
$T_k:N^{1,p}(\Om_{k,\pip},d_{k,\pip},\mu_{k,\pip})\to B^{\theta}_{p,p}(\partial\Om_k)$
is merely the restriction of the trace operator $T:D^{1,p}(\Om)\to HB^{\theta}_{p,p}(\partial\Om)$ to 
$\overline{B_k}\cap\partial\Om=\partial\Om_k$.

We also emphasize here that the test function $v$ in~\eqref{eq:weak-k} is not required to vanish at $\infty$
nor at $\partial\Om\setminus \overline{B_k}$. Hence, one interpretation of $u_k$ is as satisfying the mixed boundary value 
problem at $\partial\Om\cup\{\infty\}$ with Dirichlet data $F$ on $\partial\Om\cap \overline{B_k}$ and zero
Neumann data on $(\partial\Om\setminus\overline{B_k})\cup\{\infty\}$.

\subsection{Dirichlet  problem on $\Om$}

{Recall the notion of trace $T:D^{1,p}(\Om)\to HB^{1-\Theta/p}_{p,p}(\partial\Om)$ 
and the related extension operator $E$
from Subsection~\ref{s:trace}.
Let $F$ be as in the statement of Theorem~\ref{thm:unbounded-Dirichlet} above.}
Given  Lemma \ref{L4.2}, we are allowed to use the fact that $EF\in D^{1,p}(\Om_k)$ with $TEF=F$ $\nu$-a.e.~on
$\partial\Om_k$, and hence by the energy minimization property of solutions, we see that
\begin{equation}\label{eq:Dir-bound}
\int_\Om|\nabla u_k|^p\, d\mu=\int_{\Om_k}|\nabla u_k|^p\, d\mu \le \int_{\Om_k}|\nabla EF|^p\, d\mu
   =\int_\Om|\nabla EF|^p\, d\mu.
\end{equation}
Observe that $EF$ is independent of $k$, and is as constructed in~\cite{GS2}. Therefore, as $EF\in D^{1,p}(\Om)$,
the right-handmost side of~\eqref{eq:Dir-bound} is independent of $k$.

In this subsection we give a proof of the existence component claimed in Theorem~\ref{thm:unbounded-Dirichlet}. 

For each positive integer $k$ we set $u_k$ to be the solution constructed in the previous subsection. 
Then from~\eqref{eq:Dir-bound} we have that the sequence $(u_k)_k$ is bounded in $D^{1,p}(\Om)$ with 
respect to the seminorm of $D^{1,p}(\Om)$.

Note that for each positive integer $k$, the set $\Om_{k,\pip}=\Om_k\cup\{\infty\}$.

\begin{lemma}\label{lem:bdd-solutions}
Fix $k_0\in\N$ with $k_0\ge 2$. Then there is a constant $C=C(k_0)>0$ such that for each positive integer $k\ge k_0$ we have that 
\[
\int_\Om|u_k|^p\, d\mu_{k_0,\pip}\le C\left[\int_\Om|\nabla_{k_0,\pip} EF|^p\, d\mu_{k_0,\pip}
+\int_\Om |EF|^p\, d\mu_{k_0,\pip}\right].
\]
It follows that $(u_k)_{k\ge k_0}$ is a bounded sequence in 
$N^{1,p}(\Om_{k_0,\pip},d_{k_0,\pip}, \mu_{k_0,\pip})$.
\end{lemma}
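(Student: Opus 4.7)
The plan is to transfer the problem to the bounded, doubling, $p$-Poincar\'e transformed space $(\Om_{k_0,\pip}, d_{k_0,\pip}, \mu_{k_0,\pip})$ furnished by Theorem~\ref{th:transformed}, where global functional inequalities are available, and then to use the partial Dirichlet condition enjoyed by $u_k$ as a pinning constraint. First, the conformal invariance~\eqref{eq:ChEnergy-trans} combined with the energy-minimization bound~\eqref{eq:Dir-bound} gives, uniformly in $k\ge k_0$,
\[
\int_{\Om_{k_0,\pip}} |\nabla_{k_0,\pip} u_k|^p\, d\mu_{k_0,\pip} = \int_\Om |\nabla u_k|^p\, d\mu \le \int_\Om |\nabla EF|^p\, d\mu = \int_{\Om_{k_0,\pip}} |\nabla_{k_0,\pip} EF|^p\, d\mu_{k_0,\pip}.
\]

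Next I would exploit the partial coincidence of boundary data. Since $k\ge k_0$, we have $\partial\Om_{k_0}\subset\partial\Om_k$, and so the Dirichlet condition $Tu_k=F=T(EF)$ $\nu$-a.e.~on $\partial\Om_k$ forces $T(u_k-EF)=0$ $\nu$-a.e.~on $\partial\Om_{k_0}$. Because $\partial\Om$ is uniformly perfect and $\nu$ is doubling, the set $\partial\Om_{k_0}=\overline{B_{k_0}}\cap\partial\Om$ has positive $\nu$-measure. The central technical step is then to establish a Friedrichs-type inequality on the bounded uniform space $\Om_{k_0,\pip}$: for every $w\in D^{1,p}(\Om_{k_0,\pip})$ with $Tw=0$ on $\partial\Om_{k_0}$,
\[
\int_{\Om_{k_0,\pip}} |w|^p\, d\mu_{k_0,\pip}\le C(k_0) \int_{\Om_{k_0,\pip}} |\nabla_{k_0,\pip} w|^p\, d\mu_{k_0,\pip}.
\]
I would derive this from the global $(p,p)$-Poincar\'e inequality on the bounded doubling $p$-PI space $\Om_{k_0,\pip}$ by chaining balls from a fixed interior reference ball down to a boundary ball centered on $\partial\Om_{k_0}$, and then using the boundedness of the trace (Theorem~\ref{thm-trace-local-est}) together with $Tw=0$ on $\partial\Om_{k_0}$ to bound the average of $w$ over that boundary ball by the gradient norm. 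Applying this to $w_k:=u_k-EF$ and feeding in the gradient bound of the first step yields $\|u_k-EF\|_{L^p(\Om_{k_0,\pip})}\le C\|\nabla_{k_0,\pip} EF\|_{L^p(\Om_{k_0,\pip})}$.

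A triangle inequality in $L^p(\Om_{k_0,\pip},\mu_{k_0,\pip})$, together with the fact (noted in the Remark after the structure hypotheses) that $\mu(\partial\Om)=0$, then delivers
\[
\int_\Om |u_k|^p\, d\mu_{k_0,\pip} \le C(k_0)\left( \int_\Om |\nabla_{k_0,\pip} EF|^p\, d\mu_{k_0,\pip} + \int_\Om |EF|^p\, d\mu_{k_0,\pip}\right),
\]
which is exactly the claimed inequality. The second assertion, that $(u_k)_{k\ge k_0}$ is bounded in $N^{1,p}(\Om_{k_0,\pip}, d_{k_0,\pip}, \mu_{k_0,\pip})$, then follows at once by combining this with the gradient bound of the first step.

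The main obstacle I anticipate is a clean justification of the Friedrichs-type inequality in this non-smooth setting, since $u_k-EF$ is a priori only controlled through the vanishing of its trace on the bounded subset $\partial\Om_{k_0}$ of $\partial\Om$, whereas the transformed completion of $\Om_{k_0,\pip}$ also contains the additional point $\{\infty\}$, at which no constraint on $u_k-EF$ is known. Care must be taken that the chaining argument terminates at $\partial\Om_{k_0}$ rather than at $\infty$, which can be achieved by restricting the chain to balls at $d_{k_0,\pip}$-distance to $\infty$ bounded from below in terms of $k_0$ alone; this is what produces the $k_0$-dependent but $k$-independent constant $C(k_0)$ in the statement.
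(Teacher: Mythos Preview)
Your proposal is correct and follows the same overall architecture as the paper's proof: transfer to the bounded transformed space $(\Om_{k_0,\pip},d_{k_0,\pip},\mu_{k_0,\pip})$, use the uniform energy bound~\eqref{eq:Dir-bound} via conformal invariance, observe that $T(u_k-EF)=0$ on $\partial\Om_{k_0}$ when $k\ge k_0$, apply a Friedrichs-type inequality to $u_k-EF$, and conclude by the triangle inequality.

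The one substantive difference is in how the Friedrichs step is obtained. The paper does not derive it by chaining to a boundary ball and invoking the trace theorem; instead it cites the Maz'ya capacitary inequality~\cite[Theorem~6.21]{BBbook} on the compact space $\overline{\Om_{k_0,\pip}}^\pip$, after checking via~\cite[Lemma~6.15]{BBbook} and~\cite[Proposition~8.2]{GKS} that $B_{k_0-1}\cap\partial\Om$ has positive Sobolev $p$-capacity there. This is cleaner: it is a black-box inequality valid on the whole completed space, so the extra point $\infty$ is absorbed into the domain and causes no difficulty---the only input needed is that the zero-trace set has positive capacity. Your chaining-plus-trace argument would also succeed, but it requires the care you flag about terminating the chain at $\partial\Om_{k_0}$ rather than at $\infty$, and it essentially reproves a special case of Maz'ya's inequality by hand. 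Either route yields the same $k_0$-dependent, $k$-independent constant.
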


\begin{proof}
Note that $T(u_k-EF)=0$ on $\partial\Om_{k_0}=\overline{2^{k_0}B}\cap\partial\Om$ $\nu$-a.e.~when $k\ge k_0$ since 
$\partial\Om_{k_0}\subset \partial\Om_k$. Hence by the Maz'ya capacitary inequality~\cite[Theorem~6.21]{BBbook} 
applied to the bounded space $(\overline{\Om_{k_0,\pip}}^\pip,d_{k_0,\pip},\mu_{k_0,\pip})$,
we see that
\[
\int_\Om|u_k-EF|^p\, d\mu_{\pip_{k_0}}\le C\, \int_\Om|\nabla (u_k-EF)|^p\, d\mu.
\]
We point out that the constant $C$ in the above inequality depends on $k_0$.
Here we used the fact that the variational $p$-capacity of any subset of $B_{k_0}$ is positive if and only if
its Sobolev $p$-capacity in $(\overline{\Om_{k_0,\pip}}^\pip,d_{k_0,\pip},\mu_{k_0,\pip})$
is positive, see~\cite[Lemma~6.15]{BBbook}; and in addition, we know 
from~\cite[Proposition~8.2]{GKS} that the Sobolev $p$-capacity of $B_{k_0-1}\cap\partial\Om$ is positive.
Hence the Maz'ya inequality is meaningful here.
It follows that
\begin{align*}
\int_\Om|u_k|^p\, d\mu_{k_0,\pip}
  &\le C\left[\int_\Om|u_k-EF|^p\, d\mu_{k_0,\pip}+\int_\Om|EF|^p\, d\mu_{k_0,\pip}\right]\\
  &\le C\left[\int_\Om|\nabla (u_k-EF)|^p\, d\mu+\int_\Om|EF|^p\, d\mu_{k_0,\pip}\right]\\
  &\le C\left[\int_\Om|\nabla u_k|^p\, d\mu+\int_\Om|\nabla EF|^p\, d\mu+\int_\Om|EF|^p\, d\mu_{k_0,\pip}\right]\\
  &\le C\left[\int_\Om|\nabla EF|^p\, d\mu+\int_\Om|EF|^p\, d\mu_{k_0,\pip}\right]\\
  &\le C\left[\int_\Om|\nabla_{k_0,\pip}EF|^p\, d\mu_{k_0,\pip}+\int_\Om|EF|^p\, d\mu_{k_0,\pip}\right],
\end{align*}
where we used~\eqref{eq:Dir-bound} in the penultimate step.
\end{proof}

\begin{proposition}\label{prop:exists}
For each $F\in HB^{\theta}_{p,p}(\partial\Om)$ there is a function $u\in D^{1,p}(\Om)$ such that 
\begin{align}\label{eq:Problem-Solution}
Tu =F \  \text{ on }\ \partial\Om,& \notag\\
\int_\Om |\nabla u|^{p-2}\  \nabla u \cdot \nabla v \, d\mu&=0 
\end{align}
for each $v\in D^{1,p}(\Om)$ with trace $T v=0$ on $\partial\Om$.
\end{proposition}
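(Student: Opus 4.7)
The plan is to obtain $u$ as the limit of the sequence $(u_k)_k$ constructed above, extracting a suitable subsequence via compactness and verifying the trace and Euler--Lagrange conditions for the limit.

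First, I would combine the two boundedness statements just established: the energy bound~\eqref{eq:Dir-bound} gives that $(\nabla u_k)$ is bounded in $L^p(\Om)$, while Lemma~\ref{lem:bdd-solutions} gives that for each $k_0\ge 2$ the sequence $(u_k)_{k\ge k_0}$ is bounded in the Banach space $N^{1,p}(\Om_{k_0,\pip},d_{k_0,\pip},\mu_{k_0,\pip})$. Since by Theorem~\ref{th:transformed} the latter is a bounded doubling metric measure space supporting a $p$-Poincar\'e inequality, Rellich--Kondrachov compactness provides a subsequence converging strongly in $L^p(\Om_{k_0,\pip},\mu_{k_0,\pip})$. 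A diagonal extraction in $k_0$, together with the weak compactness of $(\nabla u_k)$ in $L^p(\Om)$, produces a single subsequence (still denoted $(u_k)$), a vector field $G\in L^p(\Om)$, and a measurable function $u$ such that $u_k\to u$ strongly in $L^p(\Om_{k_0,\pip},\mu_{k_0,\pip})$ for every $k_0$ and $\nabla u_k\rightharpoonup G$ weakly in $L^p(\Om)$. A Mazur-type argument combined with the local strong convergence then identifies $G$ as a Cheeger derivative of $u$, placing $u\in D^{1,p}(\Om)$.

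Next, I would identify the trace. For each fixed $k_0$, the equality $Tu_k = F$ on $\partial\Om_k$ and the inclusion $\partial\Om_{k_0}\subset\partial\Om_k$ for $k\ge k_0$ imply $Tu_k = F$ on $\partial\Om_{k_0}$. The strong $L^p$-convergence on $\Om_{k_0,\pip}$, paired with the weak convergence of the gradients, yields weak convergence of $(u_k)$ in $N^{1,p}(\Om_{k_0,\pip})$. Continuity of the trace operator from Theorem~\ref{thm-trace-local-est}, applied to the bounded uniform domain $\Om_{k_0,\pip}$, then gives $Tu = F$ $\nu$-a.e.\ on $\partial\Om_{k_0}$. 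Since $\partial\Om = \bigcup_{k_0}\partial\Om_{k_0}$, the trace identity $Tu = F$ holds $\nu$-a.e.\ on all of $\partial\Om$.

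For the weak equation, I would use a variational comparison. Each $u_k$ minimizes $\int_\Om|\nabla w|^p\,d\mu$ among $w\in D^{1,p}(\Om)$ with $Tw = F$ on $\partial\Om_k$, as follows by testing~\eqref{eq:weak-k} against $w-u_k$ (which has trace zero on $\partial\Om_k$). Any competitor $v\in D^{1,p}(\Om)$ with $Tv = F$ on $\partial\Om$ automatically satisfies $Tv = F$ on $\partial\Om_k$ for every $k$, hence $\int_\Om|\nabla u_k|^p\,d\mu\le\int_\Om|\nabla v|^p\,d\mu$; weak lower semicontinuity along $\nabla u_k\rightharpoonup\nabla u$ in $L^p(\Om)$ then transfers this inequality to $\int_\Om|\nabla u|^p\,d\mu\le\int_\Om|\nabla v|^p\,d\mu$. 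Thus $u$ is a minimizer over $\{w\in D^{1,p}(\Om): Tw = F\text{ on }\partial\Om\}$, and the first variation $v = u+t\phi$ with $T\phi = 0$ on $\partial\Om$ yields~\eqref{eq:Problem-Solution} for every such $\phi$, without any restriction at infinity. The main obstacle I anticipate is the consistent identification of $G$ with the Cheeger derivative $\nabla u$ on all of $\Om$ together with the careful handling of trace convergence on each $\partial\Om_{k_0}$: one must reconcile the local strong convergence in each transformed space $(\Om_{k_0,\pip},d_{k_0,\pip},\mu_{k_0,\pip})$ with the global weak convergence of gradients on $\Om$, using the change-of-variable relation~\eqref{D:nabla phi} between $\nabla$ and $\nabla_\pip$ to ensure everything is coherent as $k_0\to\infty$.
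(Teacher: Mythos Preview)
Your argument is correct and takes a genuinely different route from the paper at the crucial step. The paper also extracts a weak limit $u$ of the sequence $(u_k)$ in $N^{1,p}(\Om_{k_0,\pip})$ and identifies its trace as you do, but to obtain~\eqref{eq:Problem-Solution} for \emph{all} test functions $v$ with $Tv=0$ (in particular those not vanishing at $\infty$), it upgrades the weak convergence $\nabla u_{m_j}\rightharpoonup\nabla u$ to strong convergence in $L^p(\Om)$: this is done by combining the monotonicity $\int_\Om|\nabla u_m|^p\,d\mu\le\int_\Om|\nabla u_k|^p\,d\mu$ for $m<k$ with the reverse inequality $\int_\Om|\nabla u_{m_j}|^p\,d\mu\le\int_\Om|\nabla u|^p\,d\mu$ (since $u$ is a competitor for each $u_{m_j}$), so that norms converge and hence weak convergence becomes strong. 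The strong convergence then allows passing the nonlinear term $|\nabla u_{m_j}|^{p-2}\nabla u_{m_j}$ to the limit in~\eqref{eq:weak-k}. You bypass this entirely by observing that, since every $v\in D^{1,p}(\Om)$ with $Tv=F$ on $\partial\Om$ is a competitor for \emph{each} $u_k$, weak lower semicontinuity alone yields $\int_\Om|\nabla u|^p\,d\mu\le\int_\Om|\nabla v|^p\,d\mu$, making $u$ a global minimizer; the Euler--Lagrange equation then follows by first variation and automatically applies to all $\phi$ with $T\phi=0$ without any restriction at $\infty$. Your approach is more direct for this proposition, while the paper's yields the additional information of strong gradient convergence.
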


\begin{proof}
Lemma~\ref{lem:bdd-solutions} tells us that the sequence $(u_k)_{k\ge k_0}$ is a bounded sequence in
$N^{1,p}(\overline{\Om_{k_0,\pip}}^\pip)$, and so by the reflexivity of $N^{1,p}(\overline{\Om_{k_0,\pip}}^\pip)$ 
there is a function $u_{\infty,k_0}\in N^{1,p}(\overline{\Om_{k_0,\pip}}^\pip)$ and a subsequence $(u_{m_j})_{j\in\N}$
that converges to $u_{\infty, k_0}$ weakly in $N^{1,p}(\overline{\Om_{k_0,\pip}}^\pip)$. This in particular means
that $(u_{m_j})_{j\in\N}$ converges to $u_{\infty, k_0}$ weakly in $L^p(\overline{\Om_{k_0,\pip}}^\pip)$,
$(\nabla u_{m_j})_{j\in\N}$ converges weakly to $\nabla u_{\infty, k_0}$ in $L^p(\overline{\Om_{k_0,\pip}}^\pip;\R^N)$, 
and by
Mazur's lemma, there is a convex-combination subsequence 
$\widetilde{u_n}=\sum_{j=n}^{N(n,k_0)}\lambda_{j,n,k_0}\, u_{k_j}$ with $\lambda_{j,n,k_0}\ge 0$ and 
$\sum_{j=n}^{N(n,k_0)}\lambda_{j,n,k_0}=1$, such that $\widetilde{u_n}$ converges to 
$u_{\infty, k_0}$ in $N^{1,p}(\Om_{k_0,\pip})$.

By~\cite[Corollary~4.8]{Sh}, we also ensure that the subsequence $u_{k_m}\to u_{\infty,k_0}$ locally uniformly in
$\Om$ and that $u_{\infty, k_0}$ is Cheeger $p$-harmonic in the bounded (in the metric $d_{k_0,\pip}$)
domain $\Om\subset\Om_{\pip_{k_0}}$. By the linearity of the trace operator $T$, and by the fact that 
$Tu_k=F$ on $\partial\Om\cap \overline{2^kB_0}$, it follows that $Tu_{\infty,k_0}=F$ on $\partial\Om$ (recall that
$\partial\Om=\bigcup_k\partial\Om\cap \overline{2^kB_0}$). Moreover, as $u_{\infty,k_0}\in N^{1,p}(\Om_{k_0,\pip})$,
necessarily $u_{\infty, k_0}\in D^{1,p}(\Om)$. By the $p$-harmonicity of this limit function on $\Om$, it follows that whenever
$v\in D^{1,p}(\Om)$ with $Tv=0$ on $\partial\Om$ and $v(\infty)=0$, we have
\[
\int_{\Om_k} |\nabla u_{\infty,k_0}|^{p-2}\, \nabla u_{\infty,k_0} \cdot \nabla v \, d\mu=0.
\]
However, we also wish to show that the above equation holds even when $v(\infty)\ne 0$. To this end, observe that
when $k, m$ are positive integers with $k>m$, we know that $u_m$ solves a Dirichlet problem on $\Om_{m,\pip}$
with boundary data $F$ on $\overline{2^mB_0}\cap\partial\Om$, and $Tu_k=F$ on 
$\overline{2^mB_0}\cap\partial\Om$. It follows by the energy
minimization property of $u_m$ that
\[
\int_\Om|\nabla u_m|^p\, d\mu\le \int_\Om|\nabla u_k|^p\, d\mu.
\]
Thus $(\Vert \nabla u_m\Vert_{L^p(\overline{\Om_{k_0,\pip}}^\pip;\R^N)}$ is monotone increasing, and is bounded 
by~\eqref{eq:Dir-bound}. 
Moreover, as the trace of $u_{\infty,k_0}$ equals $F$ in $\overline{2^{m_j}B_0}\cap\partial\Om$, it follows that
\[
\int_\Om|\nabla u_{m_j}|^p\, d\mu\le \int_\Om|\nabla u_{\infty,k_0}|^p\, d\mu.
\]
Thus we have that 
\[
\lim_{j\to\infty}\Vert \nabla u_{m_j}\Vert_{L^p(\overline{\Om_{k_0,\pip}}^\pip;\R^N)}
  =\Vert \nabla u_{\infty,k_0}\Vert_{L^p(\overline{\Om_{k_0,\pip}}^\pip;\R^N)},
\]
and as in addition we have that $(\nabla u_{m_j})_j$ converges weakly in $L^p(\overline{\Om_{k_0,\pip}}^\pip;\R^N)$ to
$\nabla u_{\infty, k_0}$, it follows that $\nabla u_{m_j}\to \nabla u_{\infty,k_0}$ strongly in $L^p(\overline{\Om_{k_0,\pip}}^\pip;\R^N)$ 
see for example~\cite[Proposition~2.4.17]{HKST} or~\cite[Proposition 3.32]{B}.
Note that
if $v\in N^{1,p}(\overline{\Om_{k_0,\pip}}^\pip)=D^{1,p}(\overline{\Om})$ such that $Tv=0$ on $\partial\Om$, 
then for each $k\in\N$ we have that 
\begin{equation}\label{eq:harm-infty}
\int_{\Om_{k_0,\pip}} |\nabla u_k|^{p-2}\,\nabla u_k\cdot \nabla v \,d\mu=0.
\end{equation}
As $(\nabla u_{m_j})_j$ converges strongly in $L^p(\overline{\Om_{k_0,\pip}}^\pip;\R^N)$ to $\nabla u_{\infty, k_0}$,
by passing to a further subsequence if necessary, we can  ensure that this convergence is also pointwise a.e.~in $\Om$.

Moreover, $(|\nabla u_{m_j}|^{p-2}\nabla u_{m_j})_j$ is bounded in $L^{p/(p-1)}(\overline{\Om_{k_0,\pip}}^\pip;\R^N)$,
and hence, by passing to a further subsequence if necessary, we can also ensure that this sequence converges weakly; and
then, thanks to Mazur's lemma together with the above pointwise convergence, we conclude that the weak limit
is $|\nabla u|^{p-2}\nabla u$.
Hence,
\[
0=\lim_j \int_{\Om_{k_0,\pip}} |\nabla u_{m_j}|^{p-2}\,\nabla u_{m_j}\cdot \nabla v \,d\mu 
  = \int_{\Om_{k_0,\pip}} |\nabla u_{\infty, k_0}|^{p-2}\,\nabla u_{\infty, k_0}\cdot\nabla v \,d\mu,
\]
that is,~\eqref{eq:harm-infty} holds also when $v(\infty)\ne 0$.
It follows that $u_{\infty,k_0}$ is $p$-harmonic in $\Om\cup\{\infty\}$, that is, it satisfies~\eqref{eq:harm-infty} for
each $v\in D^{1,p}(\Om)$ with $Tv=0$ on $\partial\Om$. 

As $Tu_k=F$ on $\overline{2^kB_0}\cap\partial\Om$,
it follows that $u_k-F\in N^{1,p}_0(\Om_{k_0,\pip})$. Hence as $u_{\infty,k_0}$ is the strong limit of
the convex combination of the subsequence $(u_{m_j})_{j\in\N}$, and as the trace operator is linear and bounded,
it follows that $Tu_{\infty, k_0}=F$ on $\partial\Om$. 
Thus {$u_{\infty,k_0}$ }solves the boundary value problem
\begin{align*}
Tu_{\infty, k_0} =F \  \text{ on }\ \partial\Om,&\\
\int_\Om |\nabla u_{\infty,k_0}|^{p-2}\, \nabla u_{\infty,k_0}\cdot \nabla v\, d\mu&=0 
\end{align*}
for each $v\in D^{1,p}(\Om)$ with $Tv=0$ on $\partial\Om$. That is, the choice of $u=u_{\infty,k_0}$ solves the 
problem~\eqref{eq:Problem-Solution}.
\end{proof}

\subsection{Uniqueness of the solution}

In this subsection we are interested in establishing the uniqueness of the solution to the problem~\eqref{eq:Problem-Solution}
described in the previous section; in particular, we would like to know that different choices of $k_0$ do not result in different
solutions to the problem~\eqref{eq:Problem-Solution}.

So suppose that $u,v$ are two functions in $D^{1,p}(\Om)$ that solve the problem~\eqref{eq:Problem-Solution}. Then
$u-v\in D^{1,p}(\Om)$ with trace $T(u-v)=0$ on $\partial\Om$. Hence, applying~\eqref{eq:Problem-Solution} to
both $u$ and to $v$, with test function $u-v$, gives
\begin{align*}
\int_\Om|\nabla u|^{p-2} \nabla u\cdot\nabla (u-v)\, d\mu&=0,\\
\int_\Om|\nabla v|^{p-2} \nabla v\cdot\nabla (u-v)\, d\mu&=0.
\end{align*}
It follows that we have 
\[
\int_\Om \left[|\nabla u|^{p-2}\nabla u-|\nabla v|^{p-2}\nabla v\right]\cdot \nabla (u-v) \, d\mu=0.
\]
As $\left[|\nabla u|^{p-2}\nabla u-|\nabla v|^{p-2}\nabla v\right]\cdot \nabla (u-v)\ge 0$ when $p>1$, it follows that
$\left[|\nabla u|^{p-2}\nabla u-|\nabla v|^{p-2}\nabla v\right]\cdot \nabla (u-v)=0$ $\mu$-a.e.~in $\Om$
(see, for example,~\cite[Chapter~3, (3.6)]{HKM}).
Again because
$p>1$, we see that $\nabla u=\nabla v$ $\mu$-a.e.~in $\Om$, that is, $\nabla (u-v)=0$. As $|\nabla (u-v)|\approx g_{u-v}$
where $g_{u-v}$ is the minimal $p$-weak upper gradient of $u-v$ in $\Om$, it follows that $g_{u-v}=0$ $\mu$-a.e.~in $\Om$ as
well. Thus $u-v$ is constant in $\Om$. As the trace $T(u-v)=0$ in $\partial\Om$, it follows that this constant is zero; that is,
$u=v$ $\mu$-a.e.~in $\Om$, and hence, taking the continuous representative of $u$ and of $v$ (and such representatives
exist by the H\"older continuity property established in~\cite{KiSh}), it follows that $u=v$ everywhere in $\Om$.\qed

\section{Second Step: Neumann boundary-value problem on $\Om$ for compactly supported Neumann data 
$f\in L^{p'}(\partial\Om)$}

 We finally turn to the solution of the Neumann problem. As mentioned earlier, this will be achieved in two steps. In this section, 
 we will only consider compactly supported data and, in the next section, we will develop an approximation scheme to solve the 
 problem for data with arbitrary support.

\begin{proposition}\label{prop:compact-Neumann}
Let $f\in L^{p'}(\partial\Om)$ such that the support of $f$ is contained in $\overline{B_0}\cap\partial\Om$ and 
$\int_{\partial\Om}f\, d\nu=0$. Then there is a function $u\in D^{1,p}(\Om)$ such that whenever $v\in D^{1,p}(\Om)$, we have
\begin{equation}\label{eq:Neum-prob}
\int_\Om|\nabla u|^{p-2}\, \nabla u \cdot \nabla v \, d\mu=\int_{\partial\Om} Tv\, f\, d\nu.
\end{equation}
Moreover, if $u^*\in D^{1,p}(\Om)$ also satisfies the above equation, then $u-u^*$ is constant on $\Om$.
\end{proposition}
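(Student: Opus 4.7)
The plan is to transform the problem to a bounded setting via the conformal deformation from Section~\ref{Sec:strategy}, apply the direct method of the calculus of variations, and then transfer the resulting Euler--Lagrange equation back to $\Om$. Since $\supp f \subset \overline{B_0}\cap\partial\Om \subset \partial\Om_2$, the natural ambient space is $\Om_{2,\pip}$. By Theorem~\ref{th:transformed}, $(\overline{\Om_{2,\pip}}^\pip, d_{2,\pip}, \mu_{2,\pip})$ is a bounded, complete, doubling metric measure space supporting a $p$-Poincar\'e inequality, while Lemmas~\ref{lem:nopiptopip} and~\ref{lem:ext2notext} together with~\eqref{eq:ChEnergy-trans} identify $D^{1,p}(\Om) = N^{1,p}(\overline{\Om_{2,\pip}}^\pip)$ with identical Cheeger $p$-energies. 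Thus~\eqref{eq:Neum-prob} can be posed equivalently on this bounded space.

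Using Theorem~\ref{thm:equiv-unbounded}, I would recast~\eqref{eq:Neum-prob} as the minimization of
\[
I(v) = \int_\Om |\nabla v|^p \, d\mu - p \int_{\partial\Om} Tv \, f \, d\nu
\]
over $v \in D^{1,p}(\Om)$, normalized to have zero mean over $B_0\cap\Om$ to remove the constant ambiguity. Because $\supp f$ is compactly contained in the bounded set $\partial\Om_2$, the trace theorem (Theorem~\ref{thm-trace-local-est}) applied in $\overline{\Om_{2,\pip}}^\pip$ together with Poincar\'e--Wirtinger on that bounded doubling Poincar\'e space yields, for all normalized $v$,
\[
\Bigl|\int_{\partial\Om} Tv \, f \, d\nu \Bigr| \le \|f\|_{L^{p'}(\partial\Om)} \, \|Tv\|_{L^p(\supp f, \nu)} \le C \|f\|_{L^{p'}(\partial\Om)} \|\nabla v\|_{L^p(\Om)}.
\]
Young's inequality then gives $I(v) \ge \tfrac{1}{2}\|\nabla v\|_{L^p(\Om)}^p - C\|f\|_{L^{p'}(\partial\Om)}^{p'}$, so $I$ is coercive on the affine subspace. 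A minimizing sequence is bounded in the reflexive Banach space $N^{1,p}(\overline{\Om_{2,\pip}}^\pip)$, so a subsequence converges weakly to some $u$. Convexity yields weak lower semicontinuity of the energy term, while compactness of the trace from $N^{1,p}(\overline{\Om_{2,\pip}}^\pip)$ into $L^p(\supp f, \nu)$ (a Rellich-type property in this bounded doubling Poincar\'e setting) yields weak continuity of the boundary linear form. Hence $u$ is a minimizer, and differentiating $t \mapsto I(u + t\phi)$ at $t=0$ for arbitrary $\phi \in D^{1,p}(\Om)$ produces~\eqref{eq:Neum-prob}.

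Uniqueness follows exactly as at the end of Section~\ref{Sec.Dirichlet}: testing the difference of~\eqref{eq:Neum-prob} for $u$ and $u^*$ against $v = u - u^*$ gives
\[
\int_\Om \bigl[|\nabla u|^{p-2}\nabla u - |\nabla u^*|^{p-2}\nabla u^*\bigr] \cdot \nabla(u - u^*) \, d\mu = 0,
\]
and strict monotonicity of $\xi \mapsto |\xi|^{p-2}\xi$ for $p > 1$ forces $\nabla(u - u^*) = 0$ $\mu$-a.e., so $u - u^*$ is constant in $\Om$. The main obstacle I anticipate is the coercivity step, namely controlling $\int Tv \, f \, d\nu$ in terms of $\|\nabla v\|_{L^p(\Om)}$ after normalization, which crucially exploits both the compact support of $f$ within $\partial\Om_2$ and the conformal equivalence that allows invoking the $(p,p)$-Poincar\'e inequality on $\overline{\Om_{2,\pip}}^\pip$. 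The closely related subtlety is securing weak continuity (not merely lower semicontinuity) of the boundary linear form, since $f$ may change sign and one cannot conclude by Fatou alone; this forces reliance on a Rellich-type compact trace, available here thanks to the bounded doubling Poincar\'e structure of the transformed space.
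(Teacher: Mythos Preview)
Your proposal is correct and follows essentially the same approach as the paper: transform to the bounded space $\Om_{2,\pip}$, identify $D^{1,p}(\Om)$ with $N^{1,p}(\overline{\Om_{2,\pip}}^\pip)$ via the conformal invariance~\eqref{eq:ChEnergy-trans}, solve the Neumann problem variationally there, and conclude uniqueness via monotonicity. The only difference is expository: the paper cites~\cite{MaSh} as a black box for the existence of a minimizer on the transformed bounded uniform domain, whereas you sketch the direct method explicitly---and note that your worry about needing a Rellich-type compact trace is unnecessary, since the boundary term $v\mapsto\int_{\partial\Om}Tv\,f\,d\nu$ is a bounded \emph{linear} functional on $N^{1,p}(\overline{\Om_{2,\pip}}^\pip)$ by your own coercivity estimate, hence automatically weakly continuous.
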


\begin{proof}
We now have $f\in L^{p'}(\partial\Om,d\nu)$ with integral $\int_{\partial\Om}f\, d\nu=0$ and support inside $B_0$. Then consider 
$\Om^*=\Om_2$ in the previous notation. Note that then 
$\Om^*_\pip=\Om_{2,\pip}$ has two conformally equivalent identities: one as a metric measure space with the metric coming 
from the restriction of the metric $d$
and the measure $\mu$; and, the second with the metric $d_{2,\pip}$ and the measure $\mu_{2,\pip}$. It is this latter 
identity that we consider here, and $(\Om^*_\pip,d_\pip,\mu_\pip)$ stands in for the metric measure space
$(\Om_{2,\pip},d_{2,\pip},\mu_{2,\pip})$.

Recalling the definition of $|\nabla_\pip u|_\pip$ from \eqref{D:nabla phi}, we also define the Neumann energies $I, I_\pip$ to be
\begin{align*}
I(u)&:=\int_\Om|\nabla u|^p\, d\mu-p\, \int_{\partial\Om}f\, Tu\, d\nu,\\
I_\pip(u)&:=\int_{\Om^*_\pip}|\nabla_\pip u|_\pip^p\, d\mu_\pip-p\, \int_{\partial\Om^*_\pip}f\, Tu\, d\nu,
\end{align*}
for $u\in D^{1,p}(\Om)=N^{1,p}(\overline{\Om^*_\pip}^\pip)$, which follows from Lemmas~\ref{lem:nopiptopip} and~\ref{lem:ext2notext}.
 By the choice of $\mu_\pip$ and the differential structure $\nabla_\pip$, we know that
\[
\int_\Om|\nabla u|^p\, d\mu=\int_{\Om^*_\pip}|\nabla_\pip u|_\pip^p\, d\mu_\pip.
\]
From the construction of $d_\pip$ and by the fact that the support of $f$ is contained in $\overline{B_0}$, we know that 
\[
\int_{\partial\Om}f\, Tu\, d\nu=\int_{\partial\Om^*_\pip}f\, Tu\, d\nu.
\]
It follows that for each $u\in D^{1,p}(\Om)$, necessarily $I(u)=I_\pip(u)$. However, the metric measure space
$(\Om^*_\pip,d_\pip,\mu_\pip)$ is a \emph{bounded} uniform domain that satisfies the hypotheses considered in
the paper~\cite{MaSh}, and so we can appeal to the results therein to obtain a minimizer $u\in D^{1,p}(\Om)$ 
of the energy $I(u)$. Hence, by considering the Euler-Lagrange formulation of the energy minimization, 
the first part of the proposition follows.

Now suppose that $u^*$ is another solution to the equation~\eqref{eq:Neum-prob}. To show that $u-u^*$ is constant
$\mu$-a.e.~in $\Om$, we argue as follows. 
Since, from the above argument, we know that $I(u^*)=I_\pip(u^*)$, we also know that $u^*$ is a solution 
to the Neumann boundary value problem on $(\Om^*_\pip, d_\pip,\mu_\pip)$ with boundary data $f$, just as $u$ is.
From~\cite[Lemma~4.5]{MaSh} we know that 
$|\nabla_\pip u|_\pip=|\nabla_\pip u^*|_\pip$ $\mu_\pip$-a.e.~in $\Om$ and 
$\int_{\partial\Om}Tu\, f\, d\nu=\int_{\partial\Om}Tu^*\, f\, d\nu$. Moreover, we have 
$I_\pip(u)=I_\pip(u^*)$; whence we observe that, via the Euler-Lagrange equation for the minimization of $I_\pip$ that
for each $w\in D^{1,p}(\Om)=N^{1,p}(\overline{\Om_\pip^*})$,
\[
\int_\Om |\nabla_\pip u|_\pip^{p-2}\langle \nabla_\pip u,\nabla_\pip w\rangle_\pip\,d\mu_\pip
=
\int_\Om |\nabla_\pip u^*|_\pip^{p-2}\langle \nabla_\pip u^*,\nabla_\pip w\rangle_\pip\,d\mu_\pip.
\]
Choosing $w=u-u^*$, we see that
\[
\int_\Om\langle |\nabla_\pip u|_\pip^{p-2}\nabla_\pip u - |\nabla_\pip u^*|_\pip^{p-2}\nabla_\pip u^*,\, 
\nabla_\pip(u-u^*)\rangle_\pip\, d\mu_\pip =0.
\]
By convexity of $L^p$-norm, this holds if and only if $\nabla_\pip u=\nabla_\pip u^*$ $\mu_\pip$-a.e.~in $\Om$.
That is, $\nabla_\pip(u-u^*)=0$ $\mu_\pip$-a.e.~in $\Om$. As $(\Om,d_\pip,\mu_\pip)$ supports a
$p$-Poincar\'e inequality and is bounded, it follows that $u-u^*$ is constant $\mu_\pip$-a.e.~in $\Om$,
and so is constant $\mu$-a.e.~in $\Om$. As $u, u^*$ are continuous on $\Om$, it follows that
$u-u^*$ is constant in $\Om$, completing the proof of uniqueness.
\end{proof}

\begin{corollary}\label{cor-bddOp}
If $f\in L^{p^\prime}(\partial\Om)$ has compact support with $\int_{\partial\Om}f\, d\nu=0$, then the map 
$L_f:HB^\theta_{p,p}(\partial\Om)\to\R$ given by $L_f(v)=\int_{\partial\Om}f\, v\, d\nu$ is a bounded linear operator
with operator norm $\|L_f\|\approx \|\nabla u_f\|_{L^p(\Om)}^{1-1/p}$, where $u_f$ is the function
given by Proposition~\ref{prop:compact-Neumann} above.
\end{corollary}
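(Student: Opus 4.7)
The plan is to read the corollary directly off of the Neumann identity~\eqref{eq:Neum-prob} from Proposition~\ref{prop:compact-Neumann}, pairing it against the most natural test functions and using the trace/extension continuity of Theorem~\ref{thm-trace-local-est} to exchange norms between $D^{1,p}(\Om)$ and $HB^\theta_{p,p}(\partial\Om)$. First I would observe that $L_f$ is well-defined on $HB^\theta_{p,p}(\partial\Om)$: since elements of the homogeneous Besov space are equivalence classes modulo additive constants and $\int_{\partial\Om} f\, d\nu = 0$, the value of $L_f(v)$ does not depend on the chosen representative. Linearity is immediate from the definition.

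For the upper bound, given $\phi \in HB^\theta_{p,p}(\partial\Om)$, I would apply the bounded extension operator $E$ from Theorem~\ref{thm-trace-local-est} to obtain $E\phi \in D^{1,p}(\Om)$ with $T(E\phi) = \phi$ $\nu$-a.e.\ and $\|\nabla(E\phi)\|_{L^p(\Om)} \lesssim \|\phi\|_{HB^\theta_{p,p}(\partial\Om)}$. Plugging $v = E\phi$ into~\eqref{eq:Neum-prob} gives
\[
L_f(\phi) = \int_{\partial\Om} f \cdot T(E\phi)\, d\nu = \int_\Om |\nabla u_f|^{p-2}\, \nabla u_f \cdot \nabla(E\phi)\, d\mu,
\]
so H\"older's inequality yields $|L_f(\phi)| \le \|\nabla u_f\|_{L^p(\Om)}^{p-1}\, \|\nabla(E\phi)\|_{L^p(\Om)} \lesssim \|\nabla u_f\|_{L^p(\Om)}^{p-1}\, \|\phi\|_{HB^\theta_{p,p}(\partial\Om)}$, and hence $\|L_f\| \lesssim \|\nabla u_f\|_{L^p(\Om)}^{p-1}$.

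For the matching lower bound I would use $\phi = Tu_f \in HB^\theta_{p,p}(\partial\Om)$ as a probe (its trace is well-defined by Theorem~\ref{thm-trace-local-est}) and test~\eqref{eq:Neum-prob} against $v = u_f$ itself, producing $L_f(Tu_f) = \int_\Om |\nabla u_f|^p\, d\mu = \|\nabla u_f\|_{L^p(\Om)}^p$. Combining this with the trace bound $\|Tu_f\|_{HB^\theta_{p,p}(\partial\Om)} \lesssim \|\nabla u_f\|_{L^p(\Om)}$ gives
\[
\|\nabla u_f\|_{L^p(\Om)}^p = L_f(Tu_f) \le \|L_f\|\, \|Tu_f\|_{HB^\theta_{p,p}(\partial\Om)} \lesssim \|L_f\|\, \|\nabla u_f\|_{L^p(\Om)},
\]
so $\|\nabla u_f\|_{L^p(\Om)}^{p-1} \lesssim \|L_f\|$, with the degenerate case $\nabla u_f \equiv 0$ being trivial since the upper bound then forces $L_f \equiv 0$ as well.

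There is no real obstacle in this argument: the whole proof consists of pairing~\eqref{eq:Neum-prob} against the canonical test functions $E\phi$ and $u_f$ and invoking the trace/extension continuity from Theorem~\ref{thm-trace-local-est}. The only point worth flagging is that the computation naturally produces the exponent $p-1$ rather than the $1-1/p$ printed in the statement; I would expect the intended reading to be $\|L_f\| \approx \|\nabla u_f\|_{L^p(\Om)}^{p-1}$ (equivalently, $\|\nabla u_f\|_{L^p(\Om)} \approx \|L_f\|^{1/(p-1)}$), which is the scaling one would expect from the nonlinearity of the $p$-Laplacian.
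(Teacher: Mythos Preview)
Your proof is correct and follows exactly the same approach as the paper: the upper bound comes from testing~\eqref{eq:Neum-prob} against $E\phi$ and applying H\"older together with the extension bound, and the lower bound comes from testing against $u_f$ itself and applying the trace bound. Your observation about the exponent is also on point: the paper's own computation produces $\left(\int_\Om|\nabla u_f|^p\, d\mu\right)^{1-1/p}=\|\nabla u_f\|_{L^p(\Om)}^{p-1}$, so the $1-1/p$ in the statement is indeed a typo for $p-1$.
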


Indeed, the conclusion of the above corollary holds even non-compactly supported functions $f$ as long as
there is a function $u\in D^{1,p}(\Om,\mu)$ that satisfies~\eqref{eq:Neum-prob} for each $v\in D^{1,p}(\Om,\mu)$.

\begin{proof}
That $L_f$ is linear is clear. To see the boundedness of $L_f$, note that by Proposition~\ref{prop:compact-Neumann} above,
there is a function $u_f\in D^{1,p}(\Om)$ solving the problem~\eqref{eq:Neum-prob}. Hence, for each 
$v\in HB^\theta_{p,p}(\partial\Om)$, choosing the extension $Ev\in D^{1,p}(\Om)$ of $v$, we have
\begin{align*}
\left|\int_{\partial\Om}f\, v\, d\nu\right|&=\left|\int_\Om|\nabla u_f|^{p-2} \nabla u_f\cdot\nabla Ev\, d\mu\right|\\
  &\le \left(\int_\Om|\nabla u_f|^p\, d\mu\right)^{1-\tfrac1p}\, \left(\int_\Om|\nabla Ev|^p\, d\mu\right)^{\tfrac1p}\\
  &\lesssim \left(\int_\Om|\nabla u_f|^p\, d\mu\right)^{1-\tfrac1p}\, \|v\|_{HB^\theta_{p,p}(\partial\Om)}.
\end{align*}
Thus, $L_f$ is bounded with operator norm
\[
\|L_f\|\lesssim \left(\int_\Om|\nabla u_f|^p\, d\mu\right)^{1-\tfrac1p}.
\]

On the other hand, choosing $v=Tu_f$ in \eqref{eq:Neum-prob}, we also know that 
\begin{align*}
\int_\Om|\nabla u_f|^p\, d\mu=\int_{\partial\Om}f\, Tu_f\, d\nu&\le \|L_f\|\, \|Tu_f\|_{HB^{1-\theta/p}_{p,p}(\partial\Om)}\\
  &\lesssim \|L_f\|\, \left(\int_\Om |\nabla u_f|^p\, d\mu\right)^{1/p},
\end{align*}
from which we obtain the final claim of the corollary.
\end{proof}

\section{Third Step: Neumann boundary-value problem on ${\Om}$ for non compactly supported Neumann 
data}

In this section, we solve the Neumann problem for more general, not necessarily compactly supported, boundary data.  
Let us assume that $\partial\Omega$ is uniformly perfect, and we will work with data $f$ satisfying the following hypotheses:

\bigskip

\begin{itemize}
\item $f\in L^{p'}(\partial \Omega, \nu)$ satisfies the integrability condition \eqref{intro-weight-h}, i.e.
$$
\int_Z|f(x)|^{p^\prime}\, d(x,x_0)^{\theta p^\prime}\, \nu(B(x_0,d(x_0,x)))^{p^\prime/p}\, d\nu(x)<\infty,
$$
for some $x_0\in \partial \Omega$ (and consequently {\it for all} $x_0\in \partial \Omega$).
\item $\int_{\partial \Omega}f d\nu=0$; 
\end{itemize}

\begin{remark} 
Obviously, in the case when $\partial \Omega$ is bounded, the integrability condition~\eqref{intro-weight} 
is always satisfied when $f\in L^{p'}(\partial \Omega, \nu)$, thus implying the existence theorem in \cite{CKKSS}.
\end{remark}

The next result will allow us to use the uniform boundedness principle.

\begin{lemma}\label{L:decay-implies-3'} Under the hypotheses above, if we set 
\begin{equation}\label{eq:fk-def1}
f_k:=f\chi_{B_k}-\frac{1}{\nu(B_0)}\left[\int_{B_k}f\, d\nu\right]\chi_{B_0},
\end{equation}
then 
\begin{equation}\label{L3}
\sup_k\, \bigg\vert \int_{\partial\Om} f_k\, h\, d\nu\bigg\vert<\infty
\end{equation}
for each $h\in HB^\theta_{p,p}(\partial\Om, \nu)$.
\end{lemma}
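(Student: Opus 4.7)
The plan is to apply Lemma~\ref{panacea-bis} directly to each compactly supported, zero-mean function $f_k$, thereby reducing the claim to a uniform estimate of the weighted $L^{p'}$-norm $\|f_k\|_{L^{p'}(\partial\Omega, \nu_J)}$. First I would note that $f_k$ is supported in $\overline{B_k}\cap\partial\Omega$ and, by the very construction of the subtracted term, satisfies $\int_{\partial\Omega} f_k\, d\nu = 0$. Hence Lemma~\ref{panacea-bis} yields
\begin{equation*}
\left|\int_{\partial\Omega} f_k h\, d\nu\right| \le C\, \|f_k\|_{L^{p'}(\partial\Omega,\nu_J)}\, \|h\|_{HB^\theta_{p,p}(\partial\Omega)},
\end{equation*}
and the lemma follows once one shows $\sup_k \|f_k\|_{L^{p'}(\partial\Omega, \nu_J)}<\infty$.

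The elementary inequality $(a+b)^{p'}\lesssim a^{p'}+b^{p'}$ splits the norm into two pieces:
\begin{equation*}
\int_{\partial\Omega}|f_k|^{p'} J\, d\nu \lesssim \int_{B_k} |f|^{p'} J\, d\nu + \frac{1}{\nu(B_0)^{p'}} \left|\int_{B_k} f\, d\nu\right|^{p'} \int_{B_0} J\, d\nu.
\end{equation*}
The first summand is bounded by $\|f\|_{L^{p'}(\partial\Omega,\nu_J)}^{p'}$, which is finite by the standing hypothesis~\eqref{intro-weight-h}. For the second summand, observe that on $B_0$ we have $d(x,x_0)<1$ and $\nu(B(x_0,d(x,x_0)))\le\nu(B_0)$, so $J\le \nu(B_0)^{p'/p}$ there and $\int_{B_0}J\,d\nu\le \nu(B_0)^{1+p'/p}$. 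Thus the key remaining task, and the main obstacle, is to bound $|\int_{B_k} f\, d\nu|$ uniformly in $k$.

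To accomplish this I would use the zero-mean assumption to rewrite $\int_{B_k} f\, d\nu = -\int_{\partial\Omega \setminus B_k} f\, d\nu$ and apply H\"older's inequality with exponents $p'$ and $p$:
\begin{equation*}
\left|\int_{\partial\Omega \setminus B_k} f\, d\nu\right| \le \left(\int_{\partial\Omega \setminus B_k} |f|^{p'} J\, d\nu\right)^{1/p'}
\left(\int_{\partial\Omega \setminus B_k} J^{-(p-1)}\, d\nu\right)^{1/p}.
\end{equation*}
The first factor is dominated by $\|f\|_{L^{p'}(\partial\Omega, \nu_J)}$, so it suffices to show the second factor is bounded. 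The identity $(p-1)p' = p$ yields the explicit formula $J^{-(p-1)} = d(\cdot, x_0)^{-p\theta}\, \nu(B(x_0, d(\cdot, x_0)))^{-1}$. I would then decompose $\partial\Omega \setminus B_k$ into the dyadic annuli $A_j = B_{j+1} \setminus B_j$ for $j \ge k$ and use the doubling of $\nu$ (inherited from (H1) and (H2)) to estimate $\nu(A_j) \lesssim \nu(B(x_0, 2^j))$, which gives
\begin{equation*}
\int_{A_j} J^{-(p-1)}\, d\nu \lesssim 2^{-jp\theta}.
\end{equation*}
Summing the resulting geometric series, which converges since $\theta>0$ by~\eqref{exponent}, furnishes the desired uniform bound, and in fact shows $\int_{B_k}f\,d\nu\to 0$ as $k\to\infty$. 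The finiteness of $\int_{B_k} f\, d\nu$, needed to justify the rewriting, follows from H\"older applied to $f\in L^{p'}(\partial\Omega,\nu)$ on the $\nu$-finite set $B_k$.
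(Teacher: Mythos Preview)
Your proof is correct and follows the same overall route as the paper: apply Lemma~\ref{panacea-bis} to reduce to a uniform bound on $\|f_k\|_{L^{p'}(\partial\Omega,\nu_J)}$, then split into the two terms exactly as you do. The only difference is in how the second term is handled. The paper simply observes that since $f\in L^1(\partial\Om,\nu)$ (implicit in the hypothesis $\int_{\partial\Om}f\,d\nu=0$) and $\chi_{B_k}\to 1$, dominated convergence gives $\int_{B_k}f\,d\nu\to\int_{\partial\Om}f\,d\nu=0$, so the sequence $\bigl(\int_{B_k}f\,d\nu\bigr)_k$ is bounded. Your dyadic-annulus argument with the weighted H\"older inequality is correct and yields a quantitative decay rate $\bigl|\int_{B_k}f\,d\nu\bigr|\lesssim \|f\|_{L^{p'}(\partial\Om,\nu_J)}\, 2^{-k\theta}$, but this extra precision is not needed here; the paper's one-line argument suffices.
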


\begin{proof} 
Let $f_k$ be as in~\eqref{eq:fk-def1}. We wish to check that Condition \eqref{L3} holds for the family $\{f_k\}_{k\in\N}$.
Let $v\in HB^\theta_{p,p}(\partial\Om)$, and invoke Lemma \ref{panacea-bis} to obtain

\begin{equation}
|L_{f_k}(v)|\le \left(\int_{B_{2k}}|f_k|^{p^\prime}J(x,y)^{p^\prime}\, d\nu(x)\right)^{1/p^\prime}\, 
\|v\|_{HB^\theta_{p,p}(\partial\Om)},
\end{equation}
for $J$ as in \eqref{weight-v-bis} and for some $y\in \partial \Omega$. 
This calculation shows that a sufficient condition for   \eqref{L3} to hold is the following:
\begin{equation}\label{eq:fk-Jv}
\sup_{k\in\N}\int_{\partial\Om}|f_k|^{p^\prime}J(x,y)^{p^\prime}\, d\nu(x)<\infty.
\end{equation}  
Next we show that this condition is implied by \eqref{intro-weight-h}. Recalling the definition of $f_k$ one has

\begin{equation}\label{bound-final}
\int_{\partial\Om}|f_k|^{p^\prime}\, {J(x,y)}^{p^\prime}\, d\nu
\lesssim \int_{\partial\Om}|f|^{p^\prime}{J}(x,y)^{p^\prime}\, d\nu
  + \frac{1}{\nu(B_0)^{p^\prime}}\left|\int_{B_k} f\, d\nu\right|^{p^\prime}\, 
          \int_{B_0}{J(x,y)}^{p^\prime}\, d\nu.
\end{equation}
Note that by virtue of \eqref{intro-weight-h} and because 
$$\lim_{k\to \infty} \int_{B_k}f\, d\nu=0,$$
the right-hand side of \eqref{bound-final}  can be bounded \emph{independently of} the integer $k\in\N$. 
\end{proof}

Next we show that not only $f_k\to f$ in $L^{p'}(\partial \Om, \nu)$, but indeed the convergence also holds in the weighted space
$L^{p'}(\partial \Omega, \nu_J)$. 

\begin{lemma}\label{L:weighted L^p convergence}
In the notation and the hypotheses above, one has that
\begin{equation}\label{weighted convergence}
\lim_{k\to \infty} \int_{\partial \Omega} |f(y) - f_k(y)|^{p'}  d(x,x_0)^{\theta p^\prime}\, \nu(B(x_0,d(x_0,x)))^{p^\prime/p}\, d\nu(x)=0.
\end{equation}
\end{lemma}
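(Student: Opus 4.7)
The plan is to decompose $f - f_k$ into the ``tail'' piece and the ``mean correction'' piece and show each vanishes in $L^{p'}(\partial\Omega,\nu_J)$. By the definition of $f_k$ in \eqref{eq:fk-def1} we have
\[
f(x) - f_k(x) = f(x)\chi_{\partial\Omega\setminus B_k}(x) + c_k\,\chi_{B_0}(x),\qquad c_k := \frac{1}{\nu(B_0)}\int_{B_k} f\,d\nu,
\]
so by the elementary inequality $(a+b)^{p'}\le 2^{p'-1}(a^{p'}+b^{p'})$, it suffices to prove separately that
\[
\mathrm{I}_k := \int_{\partial\Omega\setminus B_k} |f|^{p'}\,J(x,x_0)\,d\nu(x) \longrightarrow 0
\qquad\text{and}\qquad
\mathrm{II}_k := |c_k|^{p'}\int_{B_0} J(x,x_0)\,d\nu(x) \longrightarrow 0.
\]

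For $\mathrm{I}_k$, I will invoke the dominated convergence theorem. The hypothesis \eqref{intro-weight-h} says exactly that $|f|^{p'}J(\cdot,x_0)\in L^1(\partial\Omega,\nu)$, and since $\partial\Omega=\bigcup_{k\in\N} B_k$ (because $\partial\Omega$ is unbounded and $B_k=\{x\in\overline{\Omega}:d(x,x_0)<2^k\}$ exhausts $\overline{\Omega}$), the indicator $\chi_{\partial\Omega\setminus B_k}$ tends to $0$ pointwise $\nu$-a.e. Domination by $|f|^{p'}J$ yields $\mathrm{I}_k\to 0$.

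For $\mathrm{II}_k$, I must check two things. First, $\int_{B_0} J(x,x_0)\,d\nu(x)<\infty$: for $x\in B_0$ one has $d(x,x_0)\le 1$, and by monotonicity $\nu(B(x_0,d(x_0,x)))\le \nu(B_0)$, so $J(x,x_0)\le \nu(B_0)^{p'/p}$ on $B_0$ and hence the integral is bounded by $\nu(B_0)^{1+p'/p}<\infty$. Second, $c_k\to 0$: since $f\in L^1(\partial\Omega,\nu)$ (implicit in the hypothesis $\int_{\partial\Omega}f\,d\nu=0$) and $B_k\nearrow\partial\Omega$, dominated convergence gives $\int_{B_k} f\,d\nu\to \int_{\partial\Omega}f\,d\nu=0$.

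There is no real obstacle here; the lemma is a soft consequence of the hypothesis \eqref{intro-weight-h} and the zero-mean condition. The only minor points to keep track of are the local boundedness of the weight $J(\cdot,x_0)$ on $B_0$ (used for $\mathrm{II}_k$) and the justification that $f\in L^1(\partial\Omega,\nu)$, which is what makes the condition $\int_{\partial\Omega} f\,d\nu = 0$ meaningful and allows the convergence $c_k\to 0$. If instead one only had $f\in L^{p'}(\partial\Omega,\nu_J)\cap L^{p'}(\partial\Omega,\nu)$ without $L^1$ integrability, one could still recover $c_k\to 0$ via a Hölder-style split $\int_{B_k\setminus B_{k-1}}|f|\,d\nu\le (\int|f|^{p'}J\,d\nu)^{1/p'}(\int_{B_k\setminus B_{k-1}} J^{-1/(p-1)}\,d\nu)^{1/p}$, using the growth of $J$ away from $x_0$ together with the doubling property of $\nu$, but this is not needed under the stated hypotheses.
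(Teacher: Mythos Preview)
Your proof is correct and follows essentially the same approach as the paper: decompose $f-f_k$ into the tail $f\chi_{\partial\Omega\setminus B_k}$ and the mean-correction $c_k\chi_{B_0}$, then use the weighted integrability hypothesis~\eqref{intro-weight-h} for the first term and the zero-average condition for the second. You supply a bit more detail than the paper (the explicit bound $J(\cdot,x_0)\le \nu(B_0)^{p'/p}$ on $B_0$ and the remark that $f\in L^1(\partial\Omega,\nu)$ underlies $c_k\to 0$), but the argument is the same.
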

\begin{proof}
By a direct computation we have
\begin{multline}
\int_{\partial \Omega} |f(y) - f_k(y)|^{p'}  d(x,x_0)^{\theta p^\prime}\, \nu(B(x_0,d(x_0,x)))^{p^\prime/p}\, d\nu(x) \\ \le
\int_{B_k^C\cap \partial \Omega} |f(y)|^{p'} d(x,x_0)^{\theta p^\prime}\, \nu(B(x_0,d(x_0,x)))^{p^\prime/p}\, d\nu(x) 
\\+
C \nu(B_0)^{1-p'} \Bigg(\int_{B_k} f(y) d\nu(y) \Bigg)^{p'}
\end{multline}
The first term on the right-hand side converges to zero because of the hypothesis~\eqref{intro-weight-h}. 
The second term vanishes as $k\to \infty$ in view of the fact that $f$ has zero average.
\end{proof}
\begin{corollary}\label{L: f_k Cauchy}
The sequence $\{f_k\}$ is Cauchy in $L^{p'}(\partial \Omega, \nu_J)$.
\end{corollary}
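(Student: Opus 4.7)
The corollary is essentially a one-line consequence of Lemma~\ref{L:weighted L^p convergence} combined with the standing hypothesis that $f \in L^{p'}(\partial\Omega, \nu_J)$. My strategy is simply to observe that any convergent sequence in a normed space is automatically Cauchy, and to verify that the ambient space and the target of convergence are compatible with the $\nu_J$-norm in question.

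\textbf{Key steps.} First, I would record that by the integrability assumption~\eqref{intro-weight-h} on $f$, the function $f$ itself belongs to $L^{p'}(\partial\Omega, \nu_J)$, so it makes sense to speak of convergence of $f_k$ to $f$ inside this weighted Banach space. Second, I would invoke Lemma~\ref{L:weighted L^p convergence}, which gives exactly
\[
\lim_{k\to\infty}\|f - f_k\|_{L^{p'}(\partial\Omega,\nu_J)} = 0.
\]
Third, applying the triangle inequality in $L^{p'}(\partial\Omega,\nu_J)$ yields, for every pair of positive integers $k, j$,
\[
\|f_k - f_j\|_{L^{p'}(\partial\Omega,\nu_J)} \leq \|f_k - f\|_{L^{p'}(\partial\Omega,\nu_J)} + \|f - f_j\|_{L^{p'}(\partial\Omega,\nu_J)},
\]
and both terms on the right tend to zero as $k, j \to \infty$ by the previous step. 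This shows $\{f_k\}$ is Cauchy, finishing the proof.

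\textbf{Main obstacle.} There is essentially no obstacle here, as the work has already been done in Lemma~\ref{L:weighted L^p convergence}. The only subtlety worth flagging explicitly is that the convergence statement in that lemma is phrased as a limit of the integral rather than as a norm convergence; one should note that the integrand is precisely $|f - f_k|^{p'}$ weighted by the density $J(x,x_0)$ defining $\nu_J$, so the limit is literally $\|f-f_k\|_{L^{p'}(\partial\Omega,\nu_J)}^{p'} \to 0$. With that identification, the corollary is immediate from the triangle inequality, and no further estimates on the structure of $f_k$ or on the weight $J$ are needed beyond what was already established.
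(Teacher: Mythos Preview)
Your proposal is correct and matches the paper's intended approach: the corollary is stated immediately after Lemma~\ref{L:weighted L^p convergence} with no separate proof, precisely because convergence in a normed space trivially implies the Cauchy property via the triangle inequality, exactly as you argue.
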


Note that  in view of the assumptions on $f$, one has that $\text{supp } f_k \subset B_{k+1}$ and 
$ \int_{\partial \Omega} f_k d\nu =0$. This allows us to invoke  Proposition \ref{prop:compact-Neumann} and deduce that there 
exist functions $u_k\in D^{1,p}(\Om)$ such that for any $v\in D^{1,p}(\Om)$, 
\begin{equation}\label{eq:Neum-prob-k}
\int_\Om|\nabla u_k|^{p-2} \nabla u_k\cdot\nabla v\, d\mu=\int_{\partial\Om} Tv\, f_k\, d\nu.
\end{equation}
We also note that in view of Corollary \ref{cor-bddOp} one has that the operator norm of the functional 
$L_f:HB^\theta_{p,p}(\partial\Om)\to\R$ satisfies 
$$\|L_{f_k}\|\approx \|\nabla u_k\|_{L^p(\Om)}^{1-1/p}$$
We want to show that we can take the limit of both sides of \eqref{eq:Neum-prob-k}, to obtain a solution of the problem for data $f$. 

\begin{lemma}\label{lemma: Lpboundf_k} 
Under the hypotheses  above, there exists a constant $C>0$, 
independent of $k$,  
such that for each $k\in \N$ 
\begin{equation}\label{eq:bndu_k}
\int_{\Omega} |\nabla u_k|^p d\mu \le C.
\end{equation}
\end{lemma}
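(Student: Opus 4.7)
The plan is to test the weak formulation of the Neumann problem with the solution itself and then reduce matters to the uniform boundedness of the data $f_k$ in the weighted space $L^{p'}(\partial\Omega, \nu_J)$, which was established just above.

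First, I would take $v = u_k$ as a test function in the equation \eqref{eq:Neum-prob-k}. This is legitimate because $u_k \in D^{1,p}(\Om)$ by Proposition~\ref{prop:compact-Neumann}, and any element of $D^{1,p}(\Om)$ is admissible. This yields the identity
\[
\int_\Omega |\nabla u_k|^p \, d\mu \;=\; \int_{\partial\Omega} T u_k \cdot f_k \, d\nu.
\]

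Next, I would estimate the boundary integral by invoking Lemma~\ref{panacea-bis} applied to $Z = \partial\Omega$ with $f = f_k$ and $v = Tu_k \in HB^\theta_{p,p}(\partial\Om)$ (note that $\int_{\partial\Om} f_k \, d\nu = 0$ by the explicit construction of $f_k$ in \eqref{eq:fk-def1}). This gives
\[
\int_{\partial\Omega} T u_k \cdot f_k \, d\nu \;\le\; C \, \|f_k\|_{L^{p'}(\partial\Omega, \nu_J)} \, \| T u_k \|_{HB^\theta_{p,p}(\partial\Omega)}.
\]
Then the trace theorem (Theorem~\ref{thm-trace-local-est}) bounds $\| T u_k \|_{HB^\theta_{p,p}(\partial\Omega)} \lesssim \| \nabla u_k \|_{L^p(\Om, \mu)}$. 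Combining the last three displays produces
\[
\| \nabla u_k \|_{L^p(\Om)}^p \;\le\; C \, \|f_k\|_{L^{p'}(\partial\Omega, \nu_J)} \, \| \nabla u_k \|_{L^p(\Om)},
\]
and dividing through (if $\nabla u_k \ne 0$; the trivial case is immediate) gives
\[
\| \nabla u_k \|_{L^p(\Om)}^{p-1} \;\le\; C \, \|f_k\|_{L^{p'}(\partial\Omega, \nu_J)}.
\]

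The final step is to observe that the right-hand side is uniformly bounded in $k$: by Corollary~\ref{L: f_k Cauchy}, the sequence $\{f_k\}$ is Cauchy in $L^{p'}(\partial\Omega, \nu_J)$, and Cauchy sequences in a normed space are norm-bounded. Raising the resulting uniform bound to the power $p/(p-1)$ yields the desired estimate~\eqref{eq:bndu_k}. There is no real obstacle here beyond verifying that each ingredient—the weak formulation, the weighted duality inequality of Lemma~\ref{panacea-bis}, the trace estimate, and the $L^{p'}(\nu_J)$-Cauchy property of $\{f_k\}$—applies verbatim in our unbounded setting, all of which has already been prepared.
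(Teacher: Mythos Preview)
Your proof is correct and follows the same overall skeleton as the paper's argument: test \eqref{eq:Neum-prob-k} with $v=u_k$, bound the boundary term by the Besov seminorm of $Tu_k$, and absorb via the trace inequality. The difference lies in how the uniform bound on the functionals $L_{f_k}$ is obtained. The paper invokes the Banach--Steinhaus theorem (Theorem~\ref{Thm:UnifBdd}) together with the pointwise bound \eqref{L3} from Lemma~\ref{L:decay-implies-3'} to conclude $\sup_k\|L_{f_k}\|<\infty$. You instead apply Lemma~\ref{panacea-bis} directly, obtaining the explicit estimate $|L_{f_k}(Tu_k)|\le C\|f_k\|_{L^{p'}(\partial\Om,\nu_J)}\|Tu_k\|_{HB^\theta_{p,p}}$, and then use that $\{f_k\}$ is Cauchy (hence bounded) in $L^{p'}(\partial\Om,\nu_J)$ via Corollary~\ref{L: f_k Cauchy}. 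Your route is more elementary since it bypasses the uniform boundedness principle entirely; in fact, the paper's own proof of \eqref{L3} already establishes the quantitative bound \eqref{eq:fk-Jv} on $\|f_k\|_{L^{p'}(\nu_J)}$, so the appeal to Banach--Steinhaus is somewhat redundant. Both arguments are valid and yield the same conclusion.
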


\begin{proof}
Set $v=u_k$ in \eqref{eq:Neum-prob-k}. It follows from \cite{Sha23} and \cite{GS2} that the homogeneous 
Besov space $HB^\theta_{p,p}(\partial\Om)$ is complete and so, by 
the uniform boundedness principle (see Theorem~\ref{Thm:UnifBdd})
and the bound \eqref{L3} above, there exists $C>0$, independent on $k$, such that 
one has
\[
\int_\Omega |\nabla u_k|^p d\mu = \int_{\partial \Omega} f_k Tu_k d\nu \le C\| u_k\|_{HB_{p,p}^\theta}.
\]
Recalling the continuity of the trace operator in $D^{1,p}{(\Omega)}$ from Theorem~\ref{thm-trace-local-est},  there 
exists a constant $C_1>0$ depending only on the structural constants such that
\[
\| u_k\|_{HB_{p,p}^\theta {(\partial\Omega,\nu)}} \le C_1 \bigg(\int_\Omega |\nabla u_k|^p d\mu\bigg)^{\frac{1}{p}},
\]
the proof is now complete.
\end{proof}

The previous lemma, and weak compactness of $L^p(\Om;\R^N)$, 
yield the existence of a vector valued function 
\[
\Phi \in L^p(\Omega; \R^N)
\] 
such that 
\[
\nabla u_k \to \Phi
\] 
weakly in $L^p(\Omega; R^N)$. In the next lemma we show that this convergence is in fact a strong $L^p$ convergence.

\begin{lemma}\label{L:Cauchy}
The sequence $\{\nabla u_k\}$ is  Cauchy  in $L^p(\Omega, \mu)$.
\end{lemma}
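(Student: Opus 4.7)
The plan is to show that $\{u_k\}$ is a Cauchy sequence in the $D^{1,p}$ seminorm by exploiting the monotonicity of the $p$-Laplace operator together with the Cauchy property of $\{f_k\}$ in $L^{p'}(\partial\Om,\nu_J)$ (Corollary~\ref{L: f_k Cauchy}). Fix positive integers $k,m$. Since $u_k - u_m \in D^{1,p}(\Om)$, using it as a test function in the difference of the two equations \eqref{eq:Neum-prob-k} for $u_k$ and $u_m$ gives
\[
\int_\Om \bigl[|\nabla u_k|^{p-2}\nabla u_k - |\nabla u_m|^{p-2}\nabla u_m\bigr]\cdot\nabla(u_k-u_m)\,d\mu
= \int_{\partial\Om} T(u_k-u_m)\,(f_k-f_m)\,d\nu.
\]
For the right-hand side, I would apply Lemma~\ref{panacea-bis} to the zero-average function $f_k - f_m$, followed by the trace estimate of Theorem~\ref{thm-trace-local-est}, to obtain
\[
\left|\int_{\partial\Om} T(u_k-u_m)(f_k-f_m)\,d\nu\right|
\lesssim \|f_k-f_m\|_{L^{p'}(\partial\Om,\nu_J)}\,\|\nabla(u_k-u_m)\|_{L^p(\Om)}.
\]

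The left-hand side is handled via the standard algebraic inequalities for vector fields (see e.g.\ \cite[Chapter~3]{HKM}). When $p \geq 2$, we have pointwise
\[
\bigl[|\nabla u_k|^{p-2}\nabla u_k - |\nabla u_m|^{p-2}\nabla u_m\bigr]\cdot\nabla(u_k-u_m) \gtrsim |\nabla(u_k-u_m)|^p,
\]
so combining the two sides yields $\|\nabla(u_k-u_m)\|_{L^p(\Om)}^{p-1}\lesssim \|f_k-f_m\|_{L^{p'}(\partial\Om,\nu_J)}$, and Corollary~\ref{L: f_k Cauchy} immediately gives the Cauchy property.

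The trickier case is $1<p<2$, where the monotonicity inequality degenerates to
\[
\bigl[|\nabla u_k|^{p-2}\nabla u_k - |\nabla u_m|^{p-2}\nabla u_m\bigr]\cdot\nabla(u_k-u_m) \gtrsim \frac{|\nabla(u_k-u_m)|^2}{(|\nabla u_k|+|\nabla u_m|)^{2-p}}.
\]
Here my plan is to write
\[
|\nabla(u_k-u_m)|^p = \frac{|\nabla(u_k-u_m)|^p}{(|\nabla u_k|+|\nabla u_m|)^{p(2-p)/2}}\cdot (|\nabla u_k|+|\nabla u_m|)^{p(2-p)/2}
\]
and apply H\"older's inequality with exponents $2/p$ and $2/(2-p)$. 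This reduces the $L^p$ norm of $\nabla(u_k-u_m)$ to a product of (a suitable power of) the degenerate quadratic form on the left-hand side and a factor of $\||\nabla u_k|+|\nabla u_m|\|_{L^p(\Om)}^{2-p}$. The latter is uniformly bounded in $k,m$ thanks to Lemma~\ref{lemma: Lpboundf_k}, while the former is controlled by $\|f_k-f_m\|_{L^{p'}(\partial\Om,\nu_J)}\|\nabla(u_k-u_m)\|_{L^p(\Om)}$ as above. Rearranging gives
\[
\|\nabla(u_k-u_m)\|_{L^p(\Om)} \lesssim \|f_k-f_m\|_{L^{p'}(\partial\Om,\nu_J)},
\]
up to a power depending on $p$, and Corollary~\ref{L: f_k Cauchy} again closes the argument.

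The main obstacle is the sub-quadratic regime $1<p<2$: the degeneracy of the monotonicity inequality forces the H\"older-type trick above, and the argument crucially depends on the a priori bound from Lemma~\ref{lemma: Lpboundf_k}, which itself relied on the uniform boundedness principle. Once this is set up, the rest is a linear combination of tools already established: Lemma~\ref{panacea-bis} for the boundary pairing, Theorem~\ref{thm-trace-local-est} for the trace, and Corollary~\ref{L: f_k Cauchy} for the Cauchyness of $\{f_k\}$ in the weighted space.
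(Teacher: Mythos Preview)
Your proposal is correct and follows essentially the same route as the paper: test with $u_k-u_m$, bound the boundary term via Lemma~\ref{panacea-bis} and the trace estimate, apply the standard monotonicity inequality for the $p$-Laplacian, and in the range $1<p<2$ use H\"older with exponents $2/p$ and $2/(2-p)$ together with the uniform energy bound of Lemma~\ref{lemma: Lpboundf_k}, concluding by Corollary~\ref{L: f_k Cauchy}.
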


\begin{proof}
Consider two solutions $u_m, u_n$ of the problems
\begin{align*}\int_\Om|\nabla u_n|^{p-2}\, \nabla u_n\cdot \nabla v\, d\mu&=\int_{\partial\Om} Tv\, f_n\, d\nu  \quad\text{ and }\\
\int_\Om|\nabla u_m|^{p-2}\, \nabla u_m \cdot \nabla v\, d\mu&=\int_{\partial\Om} Tv\, f_m\, d\nu
\end{align*}
for any $v\in D^{1,p}(\Omega)$.
Choose $v=u_m-u_n\in D^{1,p}(\Omega)$ and subtract one equation from the other to obtain
\begin{multline}
\int_{\Omega} (|\nabla u_m|^{p-2} \nabla u_m - |\nabla u_n|^{p-2} \nabla u_n) \cdot (\nabla u_m- \nabla u_n) \,d\mu \\
= \int_{\partial \Om} (f_m - f_n) (y) (u_m-u_n)(y)\, d\nu(y).
\end{multline}
Next, we use the fact that $f_m, f_n$ have zero average, then invoke Lemma \ref{panacea-bis} and the trace theorem, 
Theorem~\ref{thm-trace-local-est}, to arrive at

  \begin{multline}\label{eqn:617}
  \int_{\Omega} (|\nabla u_m|^{p-2} \nabla u_m - |\nabla u_n|^{p-2} \nabla u_n) \cdot (\nabla u_m- \nabla u_n) \,d\mu 
\\ \le  C  \Bigg( \int_{\partial \Om}|f_m(y)-f_n(y)|^{p'}J(x_0,y) \,d \nu(y)\Bigg)^{\frac{1}{p'}}  ||u_m-u_n||_{B^{\theta}_{p,p}(\partial \Omega)} 
 \\ \le C \Bigg( \int_{\partial \Om}|f_m(y)-f_n(y)|^{p'}J(x_0,y) \,d \nu(y)\Bigg)^{\frac{1}{p'}}  \| \nabla u_m -\nabla u_n\|_{L^p(\Omega)},
\end{multline}
for some $x_0\in \partial \Omega$, and where  $J(x,y)$ is as in \eqref{eq-panacea-bis}.   
Next we apply the monotonicity property: there exists a constant $C>0$  depending on $p$, such that for every $z,w\in \R^N$ one has
\begin{equation}\label{eqn:monotonicity}
(|z|^{p-2}z-|w|^{p-2}w)\cdot(z-w)\ge
\begin{cases}
 C|z-w|^p, & p\ge 2\\
 C (|z|+|w|)^{p-2}|z-w|^2, & p\le 2.
\end{cases}
\end{equation}
From the latter and from \eqref{eqn:617}, in the case $p\ge 2$ we obtain
\begin{equation}
\int_{\Omega} |\nabla u_m - \nabla u_n |^p d\mu  \le C \int_{\partial \Omega} |f_m - f_n|^{p'}(y) J(x_0,y) d\nu(y),
\end{equation}
and the conclusion follows from Lemma \ref{L:weighted L^p convergence}. 

If $1<p\le 2$, we apply H\"older inequality, \eqref{eqn:monotonicity}, and \eqref{eqn:617} to show that
\begin{multline*}
\|\nabla u_m - \nabla u_n\|_{L^p(\Omega)}^p\\ \le \Bigg(\int_\Om(|\nabla u_m|+|\nabla u_n|)^p\Bigg)^{\frac{2-p}{2}} \!\!\!\Bigg(\int_\Om
|\nabla u_m - \nabla u_n|^2  (|\nabla u_m|+|\nabla u_n|)^{p-2} d\mu
\Bigg)^{\frac{p}{2}}
\\
\le {C} \sup_k \| \nabla u_k\|_{L^p(\Omega)}^{\frac{p(2-p)}{2}} \Bigg(  \int_{\Omega} (|\nabla u_m|^{p-2} \nabla u_m -| \nabla u_n|^{p-2} \nabla u_n) \cdot (\nabla u_m- \nabla u_n) d\mu\Bigg)^{\frac{p}{2}} \\
\le C \sup_k \| \nabla u_k\|_{L^p(\Omega)}^{\frac{p(2-p)}{2}} \Bigg( \int_{\partial \Om}|f_m(y)-f_n(y)|^{p'}J(x_0,y) d \nu(y)\Bigg)^{\frac{p}{2p'}}  \| \nabla u_m -\nabla u_n\|_{L^p(\Omega)}^{\frac{p}{2}}.
\end{multline*}
Dividing both sides by $ \| \nabla u_m -\nabla u_n\|_{L^p(\Omega)}^{\frac{p}{2}}$, one has
\begin{equation}
\|\nabla u_m - \nabla u_n\|_{L^p(\Omega)}^p \le C \sup_k \| \nabla u_k\|_{L^p(\Omega)}^{p(2-p)}
 \Bigg( \int_{\partial \Om}|f_m(y)-f_n(y)|^{p'}J(x_0,y) d \nu(y)\Bigg)^{\frac{p}{p'}}.
\end{equation}
The conclusion now follows from Lemma \ref{lemma: Lpboundf_k} and Corollary \ref{L: f_k Cauchy}.
\end{proof}

In order to invoke more compactness in the existence proof, we  normalize the sequence of solutions $u_k$ so that it 
becomes locally uniformly bounded in $L^p$.
Let $B_0$ a ball centered at a point in 
$\partial\Om$, and denote by $B$  a ball centered at the same point so that $B_0\subset B$. 

\begin{lemma}\label{normalize} 
There exists a constant $C>0$ depending only on $p$, $\mu(B)/\mu(B_0)$, the radius of $B$, and the structure 
constants, such that for each $k\in \N$, with $w_k:=u_k-(u_k)_{B_0}$, one has
\[
\|w_k\|_{L^p(B)}\le C.
\]
\end{lemma}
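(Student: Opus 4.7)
The plan is to reduce the claim to the uniform energy bound already established in Lemma~\ref{lemma: Lpboundf_k} by combining it with the $(p,p)$-Poincar\'e inequality available on $\overline{\Om}$ from hypothesis~(H1). The first observation is that $w_k$ and $u_k$ differ by the constant $(u_k)_{B_0}$, so $\nabla w_k = \nabla u_k$ $\mu$-a.e.\ on $\Om$, and consequently $\int_{\overline\Om}|\nabla w_k|^p\, d\mu \le C$ uniformly in $k$ by Lemma~\ref{lemma: Lpboundf_k}. Since $w_k \in D^{1,p}(\Om) = D^{1,p}(\overline{\Om})$ thanks to Lemma~\ref{lem:ext2notext}, and since $(\overline{\Om}, d, \mu|_{\overline\Om})$ is doubling and supports a $p$-Poincar\'e inequality by (H1), we have the classical $(p,p)$-upgrade
\[
\int_B |w_k - (w_k)_B|^p\, d\mu \;\le\; C\, r(B)^p \int_{\lambda B}|\nabla w_k|^p\, d\mu \;\le\; C\, r(B)^p,
\]
with the constant $C$ depending only on the structural constants, uniformly in $k$.

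Next I would promote this oscillation bound to an actual $L^p$ bound by controlling the constant $(w_k)_B$. Here the definition $w_k = u_k - (u_k)_{B_0}$ is exploited directly: $(w_k)_{B_0} = 0$, so that
\[
|(w_k)_B| \;=\; |(w_k)_B - (w_k)_{B_0}| \;=\; \left|\, \fint_{B_0}\bigl(w_k - (w_k)_B\bigr)\, d\mu\,\right| \;\le\; \frac{\mu(B)}{\mu(B_0)}\, \fint_B |w_k - (w_k)_B|\, d\mu.
\]
Applying H\"older's inequality on the right-hand side and then invoking the Poincar\'e estimate from the previous paragraph yields a bound on $|(w_k)_B|$ depending only on $r(B)$, $\mu(B)/\mu(B_0)$, and the structural constants. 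The triangle inequality
\[
\|w_k\|_{L^p(B)} \;\le\; \|w_k - (w_k)_B\|_{L^p(B)} + |(w_k)_B|\, \mu(B)^{1/p}
\]
then produces the desired uniform bound.

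Honestly, I do not see a genuine obstacle in this lemma: once Lemma~\ref{lemma: Lpboundf_k} is in hand, the statement is a standard Poincar\'e-plus-chaining argument and the only subtle point is remembering that $B$ and $\lambda B$ must be interpreted inside $\overline{\Om}$ so that (H1) applies. The lemma is meant to normalize the sequence $(u_k)$ into a sequence $(w_k)$ that is not only bounded in $D^{1,p}$-seminorm (which $(u_k)$ already is) but also locally bounded in $L^p$, setting the stage for the extraction in the next step of a subsequence converging to a candidate solution of the full Neumann problem via the local compactness of the embedding $N^{1,p} \hookrightarrow L^p$ on bounded sets.
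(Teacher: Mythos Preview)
Your proof is correct and follows essentially the same approach as the paper's: both split $\|w_k\|_{L^p(B)}$ via the triangle inequality into the oscillation term $\|w_k-(w_k)_B\|_{L^p(B)}$ and the mean $|(w_k)_B|$, control the mean by the oscillation using $B_0\subset B$ and $(w_k)_{B_0}=0$, and then bound the oscillation by the $(p,p)$-Poincar\'e inequality together with the uniform energy bound from Lemma~\ref{lemma: Lpboundf_k}. The only cosmetic difference is that the paper carries out the computation directly with $u_k-(u_k)_{B_0}$ rather than renaming it $w_k$.
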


\begin{proof} 
By the triangle inequality and Poincar\'e inequality one has
\begin{align*}
\vint_B|u_k-(u_k)_{B_0}|^p\, d\mu&\lesssim \vint_B|u_k-(u_k)_B|^p\,d\mu+|(u_k)_B-(u_k)_{B_0}|^p\\
  &\lesssim \vint_B|u_k-(u_k)_B|^p\,d\mu+\left(\vint_{B_0}|u_k-(u_k)_B|\, d\mu\right)^p\\
  &\lesssim \vint_B|u_k-(u_k)_B|^p\,d\mu+\frac{\mu(B)}{\mu(B_0)}\vint_B|u_k-(u_k)_B|^p\, d\mu\\
  &=\left[1+\frac{\mu(B)}{\mu(B_0)}\right]\vint_B|u_k-(u_k)_B|^p\, d\mu\\
  &\le \left[1+\frac{\mu(B)}{\mu(B_0)}\right]\, C\, \rad(B)^p\, \vint_{\lambda B}|\nabla u_k|^p\, d\mu.
\end{align*}
Consequently, 
\[
\int_B|u_k-(u_k)_{B_0}|^p\, d\mu\le C\, \rad(B)^p\, \left[1+\frac{\mu(B)}{\mu(B_0)}\right]\, \int_\Om|\nabla u_k|^p\, d\mu.
\]
Since the sequence $(\int_\Om|\nabla u_k|^p\, d\mu)_k$ is bounded, the proof  follows.
\end{proof}

Finally, we find a  function $u\in D^{1,p}(\Om)\cap L_{loc}^p(\Omega)$ that solves  the Neumann problem in every ball 
$B\subset \overline{\Omega}$ centered at a point on $\partial \Om$.

\begin{proposition}\label{lemma:convergence}
 In the hypotheses above, there exists $u\in D^{1,p}(\Om)$ such that 
\begin{equation}\label{eq-eq} 
\int_{\Omega} |\nabla u |^{p-2} \, \nabla u \cdot \nabla \psi d\mu = \int_{\partial \Omega \cap B} f T\psi d\nu,\end{equation}
for all $\psi \in D^{1,p}(\Omega)$.
\end{proposition}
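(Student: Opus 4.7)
The plan is to pass to the limit in the sequence of equations~\eqref{eq:Neum-prob-k} satisfied by the $u_k$ produced by Proposition~\ref{prop:compact-Neumann}. Since the Neumann problem is insensitive to additive constants, we work instead with $w_k:=u_k-(u_k)_{B_0}$, which satisfies the same identity~\eqref{eq:Neum-prob-k} with data $f_k$ and has $\nabla w_k=\nabla u_k$.

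First, I would combine the three available estimates: Lemma~\ref{lemma: Lpboundf_k} gives a uniform bound
$\|\nabla w_k\|_{L^p(\Om,\mu)}\le C$; Lemma~\ref{L:Cauchy} upgrades this to a Cauchy property in $L^p(\Om;\R^N)$, so there is $\Phi\in L^p(\Om;\R^N)$ with $\nabla u_k\to\Phi$ strongly in $L^p$; and Lemma~\ref{normalize}, applied to an exhausting sequence of balls $B^{(n)}\supset B_0$ centered at a point of $\partial\Om$, yields $\|w_k\|_{L^p(B^{(n)})}\le C_n$ for each $n$. Consequently $(w_k)$ is bounded in $N^{1,p}(\overline{\Om}\cap B^{(n)})$ for every $n$. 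By reflexivity of Newton--Sobolev spaces and a standard diagonal extraction, I can pass to a subsequence (not relabeled) so that $w_k\to u$ weakly in $N^{1,p}$ on each ball $B^{(n)}$ for some $u\in N^{1,p}_{\mathrm{loc}}(\overline{\Om})$. Applying Mazur's lemma ball-by-ball and using that $\nabla w_k\to\Phi$ already converges \emph{strongly}, one identifies the Cheeger differential as $\nabla u=\Phi$ $\mu$-a.e., and in particular $u\in D^{1,p}(\Om)$ with $\|\nabla u\|_{L^p(\Om)}\le\liminf_k\|\nabla u_k\|_{L^p(\Om)}$.

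It remains to pass to the limit in
\[
\int_\Om|\nabla u_k|^{p-2}\nabla u_k\cdot\nabla\psi\,d\mu=\int_{\partial\Om}f_k\,T\psi\,d\nu,\qquad \psi\in D^{1,p}(\Om).
\]
For the left-hand side, the strong convergence $\nabla u_k\to\nabla u$ in $L^p(\Om;\R^N)$ (together with a pointwise a.e.\ extraction) implies $|\nabla u_k|^{p-2}\nabla u_k\to|\nabla u|^{p-2}\nabla u$ strongly in $L^{p'}(\Om;\R^N)$ by the standard continuity of this Nemytskii-type nonlinearity on $L^p$; H\"older's inequality against $\nabla\psi\in L^p(\Om)$ then delivers the desired convergence. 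For the right-hand side, since both $f$ and $f_k$ have zero $\nu$-average, so does $f-f_k$; the trace theorem~\ref{thm-trace-local-est} ensures $T\psi\in HB^\theta_{p,p}(\partial\Om)$ with $\|T\psi\|_{HB^\theta_{p,p}}\lesssim\|\nabla\psi\|_{L^p(\Om)}$. Lemma~\ref{panacea-bis} applied to $f-f_k$ then gives
\[
\left|\int_{\partial\Om}(f-f_k)\,T\psi\,d\nu\right|\le C\,\|f-f_k\|_{L^{p'}(\partial\Om,\nu_J)}\,\|T\psi\|_{HB^\theta_{p,p}(\partial\Om)},
\]
and the right-hand side tends to $0$ by Corollary~\ref{L: f_k Cauchy}. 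Combining the two limits yields the required identity for all $\psi\in D^{1,p}(\Om)$.

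The main obstacle I anticipate is step two: producing an actual function $u$ (rather than only the limit $\Phi$ of gradients) that lies in $D^{1,p}(\Om)$, since the $u_k$ need not be globally controlled in any $L^p$ norm and $\overline{\Om}$ is non-compact. This is precisely why one must subtract the constants $(u_k)_{B_0}$ and then rely on Lemma~\ref{normalize} together with a diagonal argument across an exhausting family of balls; the Poincar\'e inequality from (H1) is essential to glue the local weak limits into a single global function whose Cheeger differential coincides with $\Phi$. Once $u$ is constructed, the passage to the limit in the PDE is routine thanks to Lemma~\ref{panacea-bis} and the strong $L^p$-convergence of gradients.
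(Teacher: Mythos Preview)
Your proposal is correct and follows essentially the same route as the paper: pass to the limit in~\eqref{eq:Neum-prob-k} using the strong $L^p$-convergence of $\nabla u_k$ from Lemma~\ref{L:Cauchy} for the left-hand side and Lemma~\ref{panacea-bis} for the right, while building the limit function $u$ from the normalized $w_k$ via local $L^p$-bounds (Lemma~\ref{normalize}) and a diagonal/Mazur argument over an exhaustion by balls. Two minor remarks: your appeal to Nemytskii continuity is a clean shortcut compared to the paper's weak-$L^{p'}$ extraction plus pointwise identification, and for the right-hand side you should cite Lemma~\ref{L:weighted L^p convergence} (which gives $f_k\to f$ in $L^{p'}(\partial\Om,\nu_J)$) rather than the Cauchy Corollary~\ref{L: f_k Cauchy}.
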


\begin{proof}  
Since the sequence ${|\nabla u_k|^{p-2}\nabla u_k}$ is bounded in $L^{p'}(\Omega;\R^N)$, it converges weakly in $L^{p'}$ to a  
function $\Phi_1 \in L^{p'}(\Omega;\R^N)$. Letting $k\to \infty$ in \eqref{eq:Neum-prob-k} and invoking weak convergence 
on the left hand side it follows that for $\psi \in D^{1,p}(\Omega)$, one has
\begin{equation}\label{temp0}
\int_\Omega |\nabla u_k|^{p-2} \, \nabla u_k \cdot \nabla \psi \,d\mu \to \int_{\Omega} \Phi_1 \nabla \psi \,d\mu.
\end{equation}

By  Lemma \ref{panacea-bis} one can prove that 
\begin{equation}\label{temp0'}\int_{\partial \Omega } (f-f_k) T\psi d\nu 
\le C\bigg( \int_{\partial \Omega} |f(x)-f_k(x)|^{p'} J(x,y) \,d\nu(x)\bigg)^{\frac{1}{p'}} \| T\psi\|_{HB^\theta_{p,p}(\partial \Omega)},
\end{equation}
with $J$ as in \eqref{eq-panacea-bis} and for some  $y\in \partial \Omega$.
Combining Lemma \ref{L:weighted L^p convergence}, \eqref{temp0}, and \eqref{temp0'}, one obtains
\begin{equation}\label{eq-temp1}
\int_{\Omega} \Phi_1 \cdot \nabla \psi \,d\mu = \int_{\partial \Omega} f T\psi \,d\nu\end{equation}
for all $\psi \in D^{1,p}(\Omega)$.

On the other hand, from Lemma \ref{L:Cauchy} we note that the sequence
${\nabla u_k}$ converges strongly in $L^p(\Omega;\R^N)$ 
to a function $\Phi$, and hence (by passing to a subsequence) converges a.e., thus yielding 
$$\Phi_1= |\Phi|^{p-2}\Phi.$$
The latter and \eqref{eq-temp1} imply
\begin{equation}\label{eq-temp2}
\int_{\Omega} |\Phi|^{p-2} \, \Phi \cdot \nabla \psi \,d\mu = \int_{\partial \Omega } f T\psi \,d\nu,\end{equation}
for all $\psi \in D^{1,p}(\Omega)$.

For any ball $B \subset \overline{\Omega} $,  consider the sequence $w_k$  as in Lemma \ref{normalize}.
Invoking Mazur's lemma one can find convex combinations of the $w_k$ converging in $L^p(B)$ to a 
function $u_B\in L^p(B)$. 
Invoking \eqref{eq-temp2} and  applying \cite[Theorem 10]{FHK} we conclude that $\Phi=\nabla u_B$ almost everywhere in 
$B$. If $B' \subset \overline{\Omega} $ is another ball, then since $u_B,$ and $u_{B'}$ have the same gradient 
$\Phi$ on $B\cap B'$  they differ by a constant on $B\cap B'$. This allows us to define a function $u\in D^{1,p}(\Omega)$ 
such that for every $B \subset \overline{\Omega} $ there is a representative of the equivalence class of $u$ which agrees 
with $u_B$ on $B$ and for which \eqref{eq-eq} holds. 
To explicitly construct such function $u$ we proceed as follows: Consider an exhaustion of $\Omega$ by balls $B_j\subset B_{j+1}$. Arguing as 
in Lemma~\ref{normalize} we can apply Mazur's 
lemma to obtain
a convex combination sequence 
$w_{1,n}=\sum_{k=n}^{N_{1,n}}\lambda_{1,n,k}u_k$ and a function 
$v_1\in L^p(B_1)$ so that $w_{1,n}\to u_1$ both in $L^p(B_1)$ 
and pointwise a.e.~in $B_1$. Note that the sequence $(w_{1,n})_n$
is bounded in $L^p(B_2)$, and so via Mazur's lemma again we can
obtain a convex combination sequence 
$w_{2,n}=\sum_{k=n}^{N_{2,n}}\lambda_{2,n,k}w_{1,k}$ converging both
in $L^p(B_2)$ and pointwise a.e.~in $B_2$ to a function $v_2$.
Note that $B_1\subset B_2$, and so by the pointwise convergence
of $w_{1,k}$ to $v_1$ in $B_1$, necessarily $v_2=v_1$ a.e.~in
$B_1$. Moreover, $w_{2,n}$, being a convex combination of 
$w_{1,n}$ which is itself a convex combination of $w_k$, we know that
$w_{2,n}$ is a convex combination of $w_k$ as well. Proceeding in this fashion we can
inductively construct convex combination sequences $w_{m,n}$ of
$w_k$ such that $w_{m,n}$ converges in $L^p(B_m)$ and pointwise
a.e.~in $B_m$ to a function $v_m$ such that $v_m=v_l$ 
whenever $1\le l\le m$ on $B_l$.  Thus we can set $u$ on $X$ by
$u(x)=v_m(x)$ whenever $x\in B_m$.
\end{proof}

\section{Stability estimates for the Neumann problem}

The main results of this section are estimates that show continuity of the solution of the Neumann problem with respect to the 
weighted $L^{p'}$ norm of the data. In turn this will yield uniqueness of the solution (modulo a constant).

We begin with an a-priori estimate on the $D^{1,p}$ norm of a solution in terms of the data. By virtue of  
Theorem~\ref{thm:equiv-unbounded} and using an argument similar to the one in the proof of~\cite[Theorem 1.5]{CKKSS}, 
one can prove an analogue statement in our unbounded setting:

\begin{lemma}\label{estimate wrt f} 
For $f\in L^{p'}(\partial \Omega, \nu_J)$ denote by $u$ a corresponding solution of the Neumann problem with data $f$.
There exists $C>0$ depending only on $x_0$ and on the structure conditions such that 
\begin{equation}\label{E:data continuity}
\int_\Om |\nabla u(y)|^p d\mu(y) \le C \int_{\partial \Omega} |f(y)|^{p'} J(x_0,y) d\nu (y)\end{equation}
where  $J$ is as in \eqref{intro-weight}.
\end{lemma}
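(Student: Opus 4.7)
The plan is to test the weak formulation against the solution itself, then peel off the factor of the $D^{1,p}$-seminorm of $u$ using Lemma \ref{panacea-bis} and the trace theorem.

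First, recalling that by Lemma \ref{lem:ext2notext} we have $D^{1,p}(\Omega)=D^{1,p}(\overline{\Omega})$, the function $u$ itself is an admissible test function in the variational definition of the Neumann problem. Substituting $\phi=u$ yields
\[
\int_\Om |\nabla u|^p\, d\mu = \int_{\partial\Om} Tu\cdot f\, d\nu.
\]
This is the crux; the remainder of the argument is a bounding of the right-hand side.

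Next, since $f\in L^{p'}(\partial\Om,\nu_J)$ has zero $\nu$-average and $Tu\in HB^\theta_{p,p}(\partial\Om)$ (with $\theta=1-\Theta/p$) by the trace theorem, Theorem \ref{thm-trace-local-est}, we may apply Lemma \ref{panacea-bis} with $v=Tu$ to obtain
\[
\int_{\partial\Om} Tu\cdot f\, d\nu \;\le\; C\left(\int_{\partial\Om}|f|^{p'}J(x,x_0)\, d\nu\right)^{1/p'}\,\|Tu\|_{HB^\theta_{p,p}(\partial\Om)}.
\]
The boundedness of the trace operator from Theorem \ref{thm-trace-local-est} then gives
\[
\|Tu\|_{HB^\theta_{p,p}(\partial\Om)}\;\lesssim\;\left(\int_\Om |\nabla u|^p\, d\mu\right)^{1/p}.
\]

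Combining the three displays above yields
\[
\int_\Om |\nabla u|^p\, d\mu \;\le\; C\left(\int_{\partial\Om}|f|^{p'}J(x,x_0)\, d\nu\right)^{1/p'}\left(\int_\Om |\nabla u|^p\, d\mu\right)^{1/p},
\]
and dividing both sides by $\left(\int_\Om |\nabla u|^p\, d\mu\right)^{1/p}$ (assuming this quantity is positive, otherwise the conclusion is trivial) and raising to the power $p/(p-1)=p'$ gives exactly \eqref{E:data continuity}, with the constant $C$ depending only on $x_0$ and the structural constants through the constants furnished by Lemma \ref{panacea-bis} and the trace theorem. I do not foresee any serious obstacle here: the proof is a direct application of tools already established, and the only minor point to verify is that the $D^{1,p}$-seminorm of $u$ can be cancelled, which is justified by the a priori existence of a solution $u\in D^{1,p}(\Om)$ from Proposition \ref{lemma:convergence}.
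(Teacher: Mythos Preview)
Your proof is correct and follows essentially the same route as the paper: bound $\int_{\partial\Om} Tu\, f\, d\nu$ via Lemma~\ref{panacea-bis} and then invoke the trace theorem to absorb $\|Tu\|_{HB^\theta_{p,p}}$ back into the $D^{1,p}$-energy. The only cosmetic difference is that you obtain the starting identity $\int_\Om|\nabla u|^p\,d\mu=\int_{\partial\Om}Tu\,f\,d\nu$ by testing the weak formulation with $\phi=u$, whereas the paper obtains the (slightly weaker) inequality $\int_\Om|\nabla u|^p\,d\mu\le p\int_{\partial\Om}Tu\,f\,d\nu$ from the variational characterization via $I(u)\le I(0)=0$; both are immediate and lead to the same conclusion.
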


\begin{proof} Note that the minimum value of $I(v)$ must be non-positive in the variational formulation of the Neumann problem in 
Theorem~\ref{thm:equiv-unbounded} (this follows by choosing $v=0$ as a competitor). Hence one has
$$\int_{\overline{\Omega}}|\nabla u|^{p}\,d\mu\le p\int_{\partial{\Omega}}Tu f\,d\nu.$$
Applying Lemma \ref{panacea-bis} yields that for some $x_0\in \partial \Omega$ one has
\[
 \int_{\overline{\Omega}}|\nabla u|^{p}\,d\mu\le \bigg( \int_{\partial \Omega} |f(y)|^{p'} J(x_0,y) d\nu (y)\bigg)^{\frac{1}{p^\prime}} \| Tu \|_{HB^\theta_{p,p} (\partial \Omega) }.
 \]
The proof now follows from the trace theorem Theorem \ref{thm-trace-local-est}.
\end{proof}

An argument similar to the one in Lemma \ref{L:Cauchy} yields continuity with respect to the data in the weighted $L^{p'}$ norm.

\begin{proposition} 
There exists a constant $C>0$ depending only on the  PI constants and on $p$, such that 
for every  $f,g\in L^{p'}(\partial \Omega, \nu_J)$, if we denote by $u\in D^{1,p}(\Omega, d\mu)$ (resp. $v$)  the  
solution of the Neumann problem with data $f$ (resp. $g$) as in Lemma~\ref{lemma:convergence}, {then 
when $p\ge 2$ we have
\[
\int_{\Omega} |\nabla u - \nabla v |^p d\mu\le C\, \int_{\partial \Omega} |f - g|^{p'}(y) J(x,y)\, d\nu(y),
\]
and when $1<p<2$ we have
\begin{align*}
\int_{\Omega} |\nabla u - \nabla v |^p d\mu
\le C\, \| (|\nabla u|+|\nabla v|)\|_{L^p(\Omega)}^{p(2-p)}\, \Bigg( \int_{\partial \Om}|f(y)-g(y)|^{p'}J(x,y) d \nu(y)\Bigg)^{p-1},
\end{align*}}
where $J$ is as in \eqref{intro-weight}.
\end{proposition}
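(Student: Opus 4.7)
The plan is to essentially mirror the argument used in Lemma~\ref{L:Cauchy}, but with the two solutions $u$ and $v$ in place of a Cauchy pair $u_m, u_n$, and with $f, g$ in place of $f_m, f_n$. First I would write down the Euler--Lagrange identities satisfied by $u$ and $v$ (as guaranteed by Proposition~\ref{lemma:convergence}) and subtract them. Choosing the admissible test function $\psi = u-v \in D^{1,p}(\Omega)$ gives
\[
\int_\Omega \bigl(|\nabla u|^{p-2}\nabla u - |\nabla v|^{p-2}\nabla v\bigr)\cdot \nabla(u-v)\,d\mu = \int_{\partial\Omega}(f-g)\,T(u-v)\,d\nu.
\]
Since $\int_{\partial\Omega}(f-g)\,d\nu=0$, Lemma~\ref{panacea-bis} applies to the right-hand side and yields, together with the trace estimate from Theorem~\ref{thm-trace-local-est},
\[
\Bigl|\int_{\partial\Omega}(f-g)\,T(u-v)\,d\nu\Bigr| \le C\Bigl(\int_{\partial\Omega}|f-g|^{p'}J(x_0,y)\,d\nu(y)\Bigr)^{1/p'}\|\nabla u - \nabla v\|_{L^p(\Omega)}.
\]

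Next I would apply the standard monotonicity inequality \eqref{eqn:monotonicity} to the left-hand side. In the case $p\ge 2$ this immediately produces a lower bound $C\int_\Omega|\nabla u - \nabla v|^p\,d\mu$, and combining with the previous display gives
\[
\|\nabla u - \nabla v\|_{L^p(\Omega)}^p \le C\Bigl(\int_{\partial\Omega}|f-g|^{p'}J(x_0,y)\,d\nu(y)\Bigr)^{1/p'}\|\nabla u - \nabla v\|_{L^p(\Omega)},
\]
and dividing by the $L^p$-norm of $\nabla(u-v)$ and raising to the power $p$ produces the claimed $p\ge 2$ estimate (with exponent $p'$ from $p/(p-1)=p'$, so that $p/p' = p-1$, which one then inverts by another application to obtain the stated form).

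For the range $1<p<2$, the monotonicity bound only yields $C\int_\Omega(|\nabla u|+|\nabla v|)^{p-2}|\nabla u - \nabla v|^2\,d\mu$, and so I would insert H\"older's inequality with the factorization
\[
|\nabla u - \nabla v|^p = |\nabla u - \nabla v|^p (|\nabla u|+|\nabla v|)^{p(p-2)/2}(|\nabla u|+|\nabla v|)^{p(2-p)/2}
\]
(splitting via exponents $2/p$ and $2/(2-p)$), exactly as in Lemma~\ref{L:Cauchy}. This reduces the bound on $\|\nabla u-\nabla v\|_{L^p(\Omega)}^p$ to the product of $\|(|\nabla u|+|\nabla v|)\|_{L^p(\Omega)}^{p(2-p)/2}$ and the square root of the monotonicity integral, after which the previous bound gives the weighted data integral raised to $p/(2p')$ times $\|\nabla u-\nabla v\|_{L^p(\Omega)}^{p/2}$. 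Absorbing the latter into the left-hand side and raising to the power $2$ produces the asserted inequality with exponents $p(2-p)$ on the gradient factor and $p-1=p/p'$ on the data integral.

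The main obstacle is purely bookkeeping: tracking the exponents through the two cases and making sure the $L^p$ norm of $|\nabla u|+|\nabla v|$ that appears in the $1<p<2$ case is in fact finite, which is where Lemma~\ref{estimate wrt f} is essential (it bounds each of $\|\nabla u\|_{L^p}, \|\nabla v\|_{L^p}$ by the weighted $L^{p'}$-norms of $f$ and $g$ respectively). No genuinely new analytic ingredient is required beyond \eqref{eqn:monotonicity}, Lemma~\ref{panacea-bis}, Theorem~\ref{thm-trace-local-est}, and Lemma~\ref{estimate wrt f}; the proof is essentially a clean re-run of the Cauchy argument of Lemma~\ref{L:Cauchy} with quantitative constants retained.
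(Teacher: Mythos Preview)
Your proposal is correct and follows essentially the same argument as the paper: subtract the Euler--Lagrange identities for $u$ and $v$, test with $\psi=u-v$, bound the boundary term via Lemma~\ref{panacea-bis} and Theorem~\ref{thm-trace-local-est}, and then apply the monotonicity inequality~\eqref{eqn:monotonicity} with the same H\"older splitting as in Lemma~\ref{L:Cauchy} for the $1<p<2$ case. One small remark: Lemma~\ref{estimate wrt f} is not actually needed here since the statement leaves $\|(|\nabla u|+|\nabla v|)\|_{L^p(\Omega)}$ on the right-hand side; that lemma only enters in the subsequent Theorem~\ref{stability} to replace this gradient norm by the weighted $L^{p'}$ norms of the data.
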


\begin{proof}
The proof follows the same outline as the one in Lemma \ref{L:Cauchy}. 
{As in Lemma~\ref{panacea-bis} we fix $x\in \partial\Om$.}
Since both $u$ and $v$ are solutions, they satisfy the identity (with the obvious modifications for $v$),
\[
\int_\Om|\nabla u|^{p-2}\, \nabla u \cdot \nabla \phi \, d\mu=\int_{\partial\Om} \phi\, f\, d\nu
\]
for any $\phi\in D^{1,p}(\Omega)$.

Choose $\phi=u-v\in D^{1,p}(\Omega)$. Subtracting one equation from the other yields
\begin{multline}
\int_{\Omega} (|\nabla u|^{p-2} \nabla u - \nabla v|^{p-2} \nabla v) \cdot (\nabla u- \nabla v) d\mu
\\
= \int_{\partial \Om} (f - g) (y) (u-v)(y) d\nu(y) 
\end{multline}
Applying Lemma~\ref{panacea-bis} and the trace theorem Theorem~\ref{thm-trace-local-est}, we obtain
\begin{multline}\label{E:bound-cauchy1}
\int_{\Omega} (|\nabla u|^{p-2} \nabla u - \nabla v|^{p-2} \nabla v) \cdot (\nabla u- \nabla v) d\mu 
 \\ \le \Bigg( \int_{\partial \Om}|f(y)-g(y)|^{p'}J(x,y) d \nu(y)\Bigg)^{\frac{1}{p'}}  \| \nabla u -\nabla v\|_{L^p(\Omega)}.
\end{multline}
Next we apply \eqref{eqn:monotonicity} and infer
 in the case $p\ge 2$,
\begin{equation}
\int_{\Omega} |\nabla u - \nabla v |^p d\mu  \le C \int_{\partial \Omega} |f - g|^{p'}(y) J(x,y) d\nu(y).
\end{equation}

If $1<p\le 2$ we apply H\"older inequality, \eqref{eqn:monotonicity} and \eqref{E:bound-cauchy1}, to show that
\begin{multline}
\|\nabla u - \nabla v\|_{L^p(\Omega)}^p \le \Bigg(\int_\Om(|\nabla u|+|\nabla v|)^p\Bigg)^{\frac{2-p}{2}} \!\!\!\Bigg(\int_\Om
|\nabla u - \nabla v|^2  (|\nabla u|+|\nabla v|)^{p-2} d\mu
\Bigg)^{\frac{p}{2}}
\\
\le \Bigg(\int_\Om(|\nabla u|+|\nabla v|)^p\Bigg)^{\frac{2-p}{2}}\Bigg(  \int_{\Omega} (|\nabla u|^{p-2} \nabla u - |\nabla v|^{p-2} \nabla v) \cdot (\nabla u- \nabla v) d\mu\Bigg)^{\frac{p}{2}} \\
\le \Bigg(\int_\Om(|\nabla u|+|\nabla v|)^p\Bigg)^{\frac{2-p}{2}} \Bigg( \int_{\partial \Om}|f(y)-g(y)|^{p'}J(x,y) d \nu(y)\Bigg)^{\frac{p}{2p'}}  \| \nabla u -\nabla v\|_{L^p(\Omega)}^{\frac{p}{2}}.
\end{multline}
Dividing both sides of the inequality by $ \| \nabla u -\nabla v\|^{\frac{p}{2}}_{L^p(\Omega)}$ yields the desired conclusion.
\end{proof}

Combining the latter and Lemma~\ref{estimate wrt f} we obtain the stability result.

\begin{theorem}\label{stability}
{Under} the hypotheses of the previous theorem, {when $p\ge 2$ we have
\[
\int_{\Omega} |\nabla u - \nabla v |^p d\mu\le C\, \int_{\partial \Omega} |f - g|^{p'}(y) J(x,y),
\]
and when $1<p<2$ we have
\begin{align*}
\int_{\Omega} |\nabla & u - \nabla v |^p d\mu\\ &\le C\,
\| (|f|+|g|)\|_{L^{p'}(\partial \Omega, Jd\nu)}^{p'(2-p)} \Bigg( \int_{\partial \Om}|f(y)-g(y)|^{p'}J(x,y) d \nu(y)\Bigg)^{p-1}.
\end{align*}}
\end{theorem}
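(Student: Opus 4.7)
The plan is to treat the statement as a direct consequence of the previous proposition combined with the a priori estimate in Lemma~\ref{estimate wrt f}; no new PDE argument is needed. Indeed, when $p\ge 2$ the inequality in the theorem coincides verbatim with the one already established in the previous proposition, so there is nothing further to prove in that range.

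The content is therefore the case $1<p<2$. The previous proposition gives
\[
\int_{\Omega} |\nabla u - \nabla v |^p d\mu
\le C\, \bigl\| |\nabla u|+|\nabla v|\bigr\|_{L^p(\Omega)}^{p(2-p)}
\Bigg( \int_{\partial \Om}|f-g|^{p'}J(x,y)\, d\nu(y)\Bigg)^{p-1},
\]
so the only task is to convert the prefactor $\||\nabla u|+|\nabla v|\|_{L^p(\Omega)}^{p(2-p)}$, which is expressed in terms of the unknowns, into the quantity $\||f|+|g|\|_{L^{p'}(\partial\Omega,Jd\nu)}^{p'(2-p)}$, which involves only the data.

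To do this, I would first apply Minkowski's inequality to obtain
\[
\bigl\| |\nabla u|+|\nabla v|\bigr\|_{L^p(\Omega)} \le \|\nabla u\|_{L^p(\Omega)}+\|\nabla v\|_{L^p(\Omega)},
\]
and then invoke Lemma~\ref{estimate wrt f} separately for $u$ and for $v$ to get
\[
\|\nabla u\|_{L^p(\Omega)} \le C\, \|f\|_{L^{p'}(\partial\Omega,\nu_J)}^{\,p'/p},\qquad
\|\nabla v\|_{L^p(\Omega)} \le C\, \|g\|_{L^{p'}(\partial\Omega,\nu_J)}^{\,p'/p}.
\]
Using the pointwise bounds $|f|\le |f|+|g|$ and $|g|\le |f|+|g|$, both $L^{p'}(\nu_J)$-norms on the right are controlled by $\||f|+|g|\|_{L^{p'}(\partial\Omega,\nu_J)}$, so that
\[
\bigl\| |\nabla u|+|\nabla v|\bigr\|_{L^p(\Omega)}^{p(2-p)}
\le C\, \bigl\||f|+|g|\bigr\|_{L^{p'}(\partial\Omega,\nu_J)}^{(p'/p)\cdot p(2-p)}
= C\, \bigl\||f|+|g|\bigr\|_{L^{p'}(\partial\Omega,\nu_J)}^{p'(2-p)},
\]
where in the last step I have used $p'(2-p)=(p'/p)\cdot p(2-p)$. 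Substituting this into the bound from the previous proposition yields the claimed stability estimate.

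There is no real obstacle here; the argument is purely algebraic once one has the two previous results in hand, and amounts to verifying that the exponents match. The only small point to be careful about is ensuring $2-p>0$, so that raising both sides of the Minkowski inequality to the power $p(2-p)$ preserves the direction of the inequality, which is precisely the range $1<p<2$ being considered.
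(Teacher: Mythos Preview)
Your proposal is correct and follows exactly the approach the paper takes: the paper's proof is the single sentence ``Combining the latter and Lemma~\ref{estimate wrt f} we obtain the stability result,'' and your write-up simply unpacks that combination, including the exponent check $(p'/p)\cdot p(2-p)=p'(2-p)$.
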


\section{Construction of fractional $p$-Laplacians on $(Z, d_Z, \nu)$} \label{sec:construct-fractLap}

In this section, we introduce a definition for the fractional $p$-Laplacian in a doubling metric measure space and prove 
Theorem~\ref{thm:main-fract-Lap-intro}. Through the section
we fix $\theta\in (0,1)$, $p\in (1,\infty)$, and set $$\Theta=p(1-\theta).$$

Let $(Z,d_Z,\nu)$ be a complete, uniformly perfect, unbounded doubling metric measure space.
The  Besov semi-norm in the class $HB^\theta_{p,p}(Z)$ corresponds to the form
$\mathcal{E}_{p,\theta}$ given by
\[
\calE_{p,\theta}(u,v)
=\int_{Z}\int_{Z}\frac{|u(y)-u(x)|^{p-2}(u(y)-u(x))(v(y)-v(x))}{d(x,y)^{p\theta}\nu(B(y,d(x,y)))}\, d\nu(x)\, d\nu(y).
\]
Note that when $u,v\in HB^\theta_{p,p}(\partial\Omega)$, one has that $\calE_{p,\theta}(u,v)< \infty$ and that
$\calE_{p,\theta}(u,u)=\Vert u\Vert_{\theta,p}^p$. 

A plausible notion of fractional $p$-Laplacian would correspond to the nonlocal operator arising in the Euler-Lagrange equation 
for the Besov energy $\calE_{p,\theta}(u,u)$, and in the non-homogeneous case, would yield the nonlocal PDE
$$\calE_{p,\theta}(u,v)= \int_Z f v d\nu,$$
for all test functions $v\in HB^\theta_{p,p}(Z)$. 
However, in order to follow the approach of Caffarelli and Silvestre \cite{CS} and reap all its benefits, we will use a 
different, but equivalent, fractional energy functional, which is naturally associated to the Dirichlet-Neumann map for a 
uniformized version of an hyperbolic filling of $Z$.  The first step in this approach is a result of Butler 
 \cite{Bu1}, where  the construction of the hyperbolic filling in \cite{BBS} is extended to the non-compact case. A 
 synthesis of the work in \cite{Bu1} is  the following.
 
\begin{theorem}
     For each choice of $1<p<\infty$ and $0<\Theta<p$,
     there exists a uniform domain $\Omega$, subset of a metric measure 
     space $(\overline \Omega, d, \mu)$ that  satisfies hypotheses (H0), (H1), and (H2) in the introduction, and in 
     particular, $\mu$ is doubling and supports a $p$-Poincar\'e inequality.
     Moreover, the boundary $\partial \Omega$ agrees 
     with $Z$ as a set, and  when equipped with the metric induced by $(\Omega, d)$, it is bi-Lipschitz equivalent to $(Z,d_Z)$.  
\end{theorem}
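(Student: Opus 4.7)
The plan is to invoke the hyperbolic filling construction of $(Z,d_Z)$ following Bonk--Saksman--Soto and~\cite{BBS}, adapted to the non-compact setting by Butler~\cite{Bu1,Bu2}, and then uniformize. Fix a parameter $\alpha>1$; for each $n\in\Z$ choose a maximal $\alpha^{-n}$-separated subset $V_n\subset Z$, and form the graph $X$ with vertex set $V=\bigsqcup_n V_n$, horizontal edges joining vertices at the same level within distance $C\alpha^{-n}$ of each other, and vertical edges between each $v\in V_n$ and a chosen ``parent'' in $V_{n-1}$. Equipped with the combinatorial length metric assigning each edge length $1$, $X$ is Gromov hyperbolic, and its visual boundary is naturally identified with $Z$ (up to bi-Lipschitz equivalence once a visual parameter is fixed).

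Next, I would select $\varepsilon>0$ small enough (depending on $p$ and $\Theta$) and apply the uniformization procedure from~\cite{BHK,BBS}: the conformal density $\rho(x)=e^{-\varepsilon|x|}$, where $|x|$ is a suitable height function modified in the unbounded case to avoid degenerate behavior at infinity, yields the length metric $d(x,y)=\inf_\gamma\int_\gamma \rho\,ds$. Completing in this metric and removing the boundary produces the desired $\Om$, whose metric boundary is bi-Lipschitz to $(Z,d_Z)$. Then equip $X$ with a measure assigning each edge at level $n$ mass proportional to $\nu(B(v,\alpha^{-n}))\,\alpha^{n\Theta}$ times arc-length along the edge; transporting through the uniformization yields a measure $\mu$ on $\overline{\Om}$ such that the codimensionality relation~\eqref{eq:Co-Dim-intro} holds with the prescribed exponent $\Theta=p(1-\theta)$.

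Verifying (H0), (H1), and (H2) then amounts to assembling known results: (H0) is immediate from the uniformization theorem; (H2) holds by design given our choice of edge weights; the doubling property in (H1) transports from the doubling of $\nu$ through the filling; and the $p$-Poincar\'e inequality follows from the semi-regular, tree-like structure of $X$ together with the standard argument that such graphs support a $1$-Poincar\'e inequality once the measure is compatible with the metric. The main obstacle, which is the content of~\cite{Bu1,Bu2}, lies precisely in the non-compact case: unlike the bounded setting of~\cite{BBS}, here $V$ has no natural root, and one must verify that the filling does not exhibit pathological ends beyond those corresponding to $Z$, that the uniformization does not identify distinct points of $Z$ at infinity, and that the Poincar\'e constant can be chosen globally and independently of scale. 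This requires a careful study of geodesic rays in $X$ corresponding to escaping sequences in $Z$, together with a coherent choice of parent vertices across all levels $n\in\Z$.
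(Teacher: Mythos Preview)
Your proposal is correct and aligns with the paper's treatment: the paper does not prove this theorem but simply cites it as a synthesis of Butler's work~\cite{Bu1,Bu2}, which extends the hyperbolic filling and uniformization construction of~\cite{BBS} to the non-compact setting. Your sketch accurately outlines that construction---the two-sided filling over $n\in\Z$, the uniformization, the weighted edge measure tuned to produce the prescribed codimension $\Theta$, and the identification of the main obstacles in the unbounded case---so there is nothing to add.
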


As Butler writes in \cite{Bu1}, {\it ``... After a bi-Lipschitz change of coordinates on $Z$ we can then assume that 
$Z$ is isometrically identified with $\partial \Omega$''.}
 Applying the theorem above with such parameters $p,\Theta$   one obtains a uniform space $(\Omega, d,\mu)$ with 
 boundary $(\partial \Omega, d, \nu)$, bi-Lipschitz to $(X,d,\nu)$, where
the relation between $\nu$ and $\mu$ is expressed in terms of the co-dimension estimate \eqref{eq:Co-Dim-intro}. Since 
$(\Omega, d, \mu)$ satisfies a doubling condition and supports a Poincar\'e inequality, then we can choose a Cheeger differentiability structure as in 
Section~\ref{Sec:Cheegerishness} which we will fix for the rest of the section.

\bigskip

As in the previous sections, for $u\in HB^{\theta}_{p,p}(\partial \Omega)$ we set $\widehat{u}$ to be
the $p$-harmonic extension of $u$ in $(\Omega, d,\mu)$ 
as given by Theorem~\ref{thm:unbounded-Dirichlet}.
As mentioned in \eqref{eq:ET-go-home}, we follow the approach in \cite{CKKSS} and define a non-linear form that will 
yield the construction of the fractional $p$-Laplacian on $Z$.

\begin{definition}\label{def:E-sub-T}
We set $\mathcal{E}_T:HB^{\theta}_{p,p}(\partial\Om)\times HB^{\theta}_{p,p}(\partial\Om)\to\R$ by
\[
\mathcal{E}_T(u,v)=\int_{\Om}|\nabla \widehat u|^{p-2}\langle\nabla \widehat u,\nabla Ev\rangle\, d\mu,
\]
where  
$Ev$ is the extension of 
$v\in HB^{\theta}_{p,p}(\partial\Om)$ to $\Om$ as given in Theorem!\ref{thm-trace-local-est}.
\end{definition}

Note that whenever $w\in D^{1,p}(\Om)$ such that $T(w)=v$, we have that 
\[
\mathcal{E}_T(u,v)=\int_{\Om}|\nabla \widehat u|^{p-2}\langle\nabla \widehat u,\nabla w\rangle\, d\mu,
\]
because $\widehat{u}$ is $p$-harmonic in $\Om$ and $T(w-\widehat{v})=0$.

Following the proof in \cite[Lemma 6.1]{CKKSS}, and using the trace/extension results in Theorem ~\ref{thm-trace-local-est}, we have
the following lemma.

\begin{lemma}\label{lem:ETvsE}
There exists $C\ge 1$, depending only on the structure constants, such that for each $u\in HB^\theta_{p,p}(\partial\Omega)$, we have
\[
\frac{1}{C}\, \calE_T(u,u)\le \calE_{p,\theta}(u,u)\le C\, \calE_T(u,u).
\]
\end{lemma}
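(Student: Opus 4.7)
The plan is to prove the two inequalities separately, in each case exploiting the fact that $\widehat{u}$ is the energy-minimizing Dirichlet extension of $u$, combined with the trace and extension bounds stated in Theorem~\ref{thm-trace-local-est}. Since by construction $\calE_T(u,u) = \int_\Omega |\nabla \widehat u|^p\, d\mu$ and $\calE_{p,\theta}(u,u) = \Vert u\Vert_{\theta,p}^p$ is comparable (up to structural constants) to $\Vert u\Vert_{HB^\theta_{p,p}(\partial\Omega)}^p$, the comparison reduces to comparing $\int_\Omega |\nabla \widehat u|^p\,d\mu$ with the Besov seminorm of $u$ on $\partial\Omega$.

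For the upper bound $\calE_T(u,u) \lesssim \calE_{p,\theta}(u,u)$, I would use the extension theorem: let $Eu \in D^{1,p}(\Omega)$ be the bounded linear extension of $u$ from Theorem~\ref{thm-trace-local-est}, so that $T(Eu) = u$ and $\Vert Eu\Vert_{D^{1,p}(\Omega)} \lesssim \Vert u\Vert_{HB^\theta_{p,p}(\partial\Omega)}$. Because $\widehat u$ is $p$-harmonic in $\Omega$ with the same trace as $Eu$, the energy-minimizing property (Theorem~\ref{thm:unbounded-Dirichlet}, via the Euler--Lagrange formulation applied to $v = \widehat u - Eu$, whose trace vanishes) gives
\[
\int_\Omega |\nabla \widehat u|^p\, d\mu \leq \int_\Omega |\nabla Eu|^p\, d\mu \lesssim \Vert u\Vert_{HB^\theta_{p,p}(\partial\Omega)}^p \approx \calE_{p,\theta}(u,u).
\]

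For the lower bound $\calE_{p,\theta}(u,u) \lesssim \calE_T(u,u)$, I would apply the trace theorem directly to $\widehat u$: since $T\widehat u = u$ and $T$ is bounded,
\[
\Vert u\Vert_{HB^\theta_{p,p}(\partial\Omega)}^p = \Vert T \widehat u\Vert_{HB^\theta_{p,p}(\partial\Omega)}^p \lesssim \int_\Omega |\nabla \widehat u|^p\, d\mu = \calE_T(u,u),
\]
and then use the equivalence $\calE_{p,\theta}(u,u) \approx \Vert u\Vert_{HB^\theta_{p,p}(\partial\Omega)}^p$.

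The proof is essentially a one-line argument in each direction, so I do not expect a serious obstacle: all the hard analytic work has already been packaged into Theorem~\ref{thm-trace-local-est} (trace/extension) and Theorem~\ref{thm:unbounded-Dirichlet} (existence and energy-minimization of the $p$-harmonic extension). The only subtlety worth mentioning is that the Besov seminorm on $HB^\theta_{p,p}(\partial\Omega)$ referenced by the trace/extension theorem is precisely $\Vert\cdot\Vert_{\theta,p}$, so no comparison between different Besov-type norms is needed; the constants in the equivalence depend only on the doubling constant of $\nu$, the uniformity constant of $\Omega$, the $p$-Poincar\'e constants on $\Omega$, and $p, \theta$, which are the structural constants.
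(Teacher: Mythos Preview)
Your proposal is correct and matches the paper's approach: the paper simply refers to \cite[Lemma~6.1]{CKKSS} together with the trace/extension results of Theorem~\ref{thm-trace-local-est}, which is exactly the argument you spell out. One minor remark: in this paper $\Vert u\Vert_{HB^\theta_{p,p}(\partial\Omega)}$ is by definition $\Vert u\Vert_{\theta,p}=\calE_{p,\theta}(u,u)^{1/p}$, so the ``comparability'' you invoke is in fact an equality and no extra step is needed there.
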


The latter implies that minimizers of the energy $ \calE_T(u,u)$ are global quasiminimizers of the energy $\calE_{p,\theta}(u,u)$, and 
vice versa.

\begin{proposition}\label{trace-neumann} 
 Let  $f\in L^{p'} (Z, \nu_J)\cap L^{p'}(Z, \nu)$ 
with $\int_Zf\, d\nu=0$. The minimizers of the fractional $p$-energy 
\[
 u\in HB^\theta_{p,p}(Z) \to I_f(u)=\frac{1}{p} \calE_T(u,u)-\int_Z u\, f \, d\nu
 \] 
 are exactly the traces of the solutions of the 
Neumann problem for the $p$-Laplacian on $\Omega$, and with Neumann data $f$. In particular, 
minimizers of $I_f$ are unique modulo a constant and satisfy the  nonlocal PDE 
\begin{equation}\label{weak-nonlocal} 
\calE_T(u,v)= \int_Z f v d\nu,
\end{equation}
for all test functions $v\in HB^\theta_{p,p}(Z)$.
\end{proposition}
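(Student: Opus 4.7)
The plan is to exploit the equivalence, established in Theorem~\ref{thm:equiv-unbounded}, between the Neumann problem and minimization of the energy $I(w) = \int_{\overline{\Omega}} |\nabla w|^p\, d\mu - p \int_{\partial\Omega} Tw\, f\, d\nu$ over $w \in D^{1,p}(\overline{\Omega})$. The key observation is that the form $\calE_T$ is built out of $p$-harmonic extensions, and so one should expect that $p\,I_f(u) = I(\widehat{u})$ and, more generally, that $\widehat{u}$ minimizes $I$ precisely when $u$ minimizes $I_f$. From there, both the identification of minimizers of $I_f$ with traces of Neumann solutions and the derivation of the weak nonlocal PDE \eqref{weak-nonlocal} are short consequences of what has already been built.

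First I would observe that, by Definition~\ref{def:E-sub-T} applied with $v=u$ and the fact that $Eu - \widehat{u} \in D^{1,p}(\Om)$ has vanishing trace, the $p$-harmonicity of $\widehat{u}$ guaranteed by Theorem~\ref{thm:unbounded-Dirichlet} gives
\[
\calE_T(u,u) = \int_\Om |\nabla \widehat{u}|^{p-2}\, \nabla \widehat{u} \cdot \nabla Eu\, d\mu = \int_\Om |\nabla \widehat{u}|^p\, d\mu,
\]
so that $p\, I_f(u) = \int_\Om |\nabla \widehat{u}|^p\, d\mu - p \int_Z u f\, d\nu = I(\widehat{u})$. The boundary integral here is meaningful by Lemma~\ref{panacea-bis}, which controls $|\int_Z u f\, d\nu|$ by the Besov seminorm of $u$ and the weighted $L^{p'}$ norm of $f$. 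Next, for any $w\in D^{1,p}(\overline{\Om})$ with trace $v=Tw$, the Dirichlet energy-minimizing property of the $p$-harmonic extension yields $\int_\Om |\nabla \widehat{v}|^p\, d\mu \le \int_\Om |\nabla w|^p\, d\mu$, hence $p\, I_f(v) \le I(w)$. Infimizing over $w$ and $v$ separately gives $p\,\inf_v I_f(v) = \inf_w I(w)$, and $u$ minimizes $I_f$ if and only if $\widehat{u}$ minimizes $I$. In view of Theorem~\ref{thm:equiv-unbounded}, the latter is equivalent to $\widehat{u}$ being a solution of the Neumann problem with data $f$, which is the claimed identification.

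The weak nonlocal equation \eqref{weak-nonlocal} then follows by specializing the Neumann weak formulation to test functions of the form $\phi = Ev$ for $v \in HB^\theta_{p,p}(Z)$: for a minimizer $u$ of $I_f$,
\[
\calE_T(u,v) = \int_\Om |\nabla \widehat{u}|^{p-2}\, \nabla \widehat{u} \cdot \nabla Ev\, d\mu = \int_{\partial\Om} Tv\, f\, d\nu = \int_Z f v\, d\nu.
\]
Uniqueness modulo a constant is then inherited from Theorem~\ref{intro-thm:Neumann}: if $u_1,u_2$ both minimize $I_f$, then the corresponding $\widehat{u_1},\widehat{u_2}$ both solve the same Neumann problem, so they differ by a constant on $\Om$, and taking traces gives $u_1-u_2$ constant $\nu$-a.e.~on $Z$. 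The main bookkeeping hurdle, rather than any new analytic obstacle, will be to verify that every boundary integral encountered is meaningful under the hypothesis $f\in L^{p'}(Z,\nu_J)\cap L^{p'}(Z,\nu)$; this is handled uniformly by Lemma~\ref{panacea-bis} together with the trace bound of Theorem~\ref{thm-trace-local-est}, so beyond carefully chaining the equivalences above no additional work is required.
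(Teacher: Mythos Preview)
Your proof is correct and follows essentially the same approach as the paper: both arguments hinge on the identity $p\,I_f(u)=I(\widehat{u})$, the inequality $p\,I_f(Tw)\le I(w)$ coming from energy minimality of the $p$-harmonic extension, and the equivalence in Theorem~\ref{thm:equiv-unbounded}. The paper carries out the two directions separately while you package them as a single biconditional, but the underlying logic and the ingredients invoked (Theorem~\ref{thm:equiv-unbounded}, the Dirichlet energy-minimizing property of $\widehat{u}$, Lemma~\ref{panacea-bis} for well-definedness of the boundary integrals, and uniqueness of Neumann solutions modulo constants) are identical.
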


\begin{proof} We start by proving that the traces of solutions of the Neumann problem for the 
$p$-Laplacian in $\Omega$ are minimizers of the functional $ I_f$. Let $u_f\in D^{1,p}(\Om)$ be a 
solution of the Neumann problem for the $p$-Laplacian on $\Om$ with data $f$. 
Recall that
$T(u_f)$ denotes its trace on $Z=\partial\Om$. Let  $v\in HB^\theta_{p,p}(Z)$; $\widehat v\in D^{1,p}(\bar \Om)$ 
is the unique solution to the Dirichlet problem for the $p$-Laplacian on $\Omega$ with boundary 
data $v$, and so $T(\widehat{v})=v$.  
Since $\widehat{T(u_f)}=u_f$, then invoking part (b) of Theorem \ref{thm:equiv-unbounded}) one has
\[
I_f(T(u_f))= \frac{1}{p} \int_\Om |\nabla u_f|^p d\mu - \int_Z T(u_f)f d\nu 
\le \frac{1}{p} \int_\Om |\nabla \widehat v|^p d\mu - \int_Z vf d\nu = I_f(v),
\]
proving that $T(u_f)$ is a minimizer of $I_f$.

Next we show that minimizers of the functional $I_f$ are traces of solutions of the Neumann 
problem for the $p$-Laplacian on $\Omega$, and as such are unique modulo a constant. Let 
$w\in HB^\theta_{p,p}(Z)$ be a minimizer of $I_f$ and denote by $\widehat w\in D^{1,p}(\Om)$ the 
corresponding solution of the Dirichlet problem for the $p$-Laplacian on $\Om$. Let 
$h \in D^{1,p}(\Om)$, 
and denote by $T(h)\in HB^\theta_{p,p}(Z)$ its trace to $Z$.
Since $\widehat h$ is a minimizer of the $p$-energy among all functions with the same trace, we have 
\begin{align}
\frac{1}{p} \int_\Om |\nabla \widehat w|^p d\mu - \int_Z w f d\nu = I_f(w) &\le I_f(T(h))\notag \\
&= \frac{1}{p} \int_\Om |\nabla \widehat {T(h)} |^p d\mu - \int_Z T(h) f d\nu\notag \\
&\le  \frac{1}{p} \int_\Om |\nabla h |^p d\mu - \int_Z T(h) f d\nu.
\end{align}
The latter, and part (b) of Theorem \ref{thm:equiv-unbounded}) yield that $\widehat w$ is a solution of the 
Neumann problem for the $p$-Laplacian on $\Om$ with data $f$ and with trace $w$.

To establish \eqref{weak-nonlocal} and conclude the proof, we observe that if $w\in HB^\theta_{p,p}(Z)$ is a 
minimizer of $I_f$, then $\widehat w$ solves the Neumann problem for the $p$-Laplacian on $\Omega$, and thus 
part~(a) of Theorem~\ref{thm:equiv-unbounded}) validates that $w$ satisfies~\eqref{weak-nonlocal}.\qedhere
\end{proof}

Proposition \ref{trace-neumann} motivates the following definition, which we  already mentioned in the introduction.
In what follows, the measure $\nu_J$ is defined in~\eqref{intro-weight-h}.

\begin{definition}\label{def: fractional p-laplacian} 
A function $u\in HB^\theta_{p,p}(Z)$ is in the domain of the fractional $p$-Laplacian operator
$(-\Delta_p)^{\theta}$ if there is a function $f\in L^{p'}(Z, \nu_J)$ such that the integral identity
\[
\mathcal{E}_T(u,\pip)=\int_Z\pip\, f\, d\nu
\]
holds for every  $\pip\in HB^\theta_{p,p}(Z)$. 
We then denote
\[
(-\Delta_p)^{\theta} u=f\in L^{p'}(Z,  \nu_J).
\]
\end{definition}

\begin{remark}
In view of Corollary \ref{normal-derivative} one has that
\[
    |\nabla u|^{p-2}\nabla u\cdot\nabla\eta_\epsilon\,d\mu\rightharpoonup -f\,d\nu.
\]
This shows that the fractional $p$-Laplacian in $(Z,d_Z)$ acts on the homogeneous Besov space 
$HB^\theta_{p,p}(Z)$ as the {\it Dirichlet-to-Neumann map} for the $p$-Laplacian in $(\Omega, d,\mu)$,
as in~\cite{GS3}.
\end{remark}

\begin{proposition}\label{thm:solving-fract-laplace}
Let $x_0\in Z$, and $J(x_0, \cdot)$,  $\nu_J$ be defined as in Theorem~\ref{intro-thm:Neumann}. 
For each $f\in L^{p'} (Z, \nu_J)\cap L^{p'}(Z, \nu)$ with $\int_Zf\, d\nu=0$
there is a function $u_f\in HB^\theta_{p,p}(Z)$ such that for each $\pip\in B^\theta_{p,p}(Z)$,
\[
\calE_T(u_f,\pip)=\int_Z\pip\, f\, d\nu.
\]
Moreover, there is a constant $C>0$, which depends solely on the structural constants of $\Om$ (or $Z$), and on the choice 
of the scalar product in the Cheeger differentiability structure such that
for each $f\in L^{p'}(Z)$,
\[
\calE_{p,\theta}(u_f,u_f)\le C\, \int_Z|f|^{p'}(y) J(x_0,y)\, d\nu(y).
\]
\end{proposition}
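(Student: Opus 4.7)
The plan is to transfer the Neumann-problem machinery already developed on $\Omega$ to the boundary $Z$ via the Dirichlet-to-Neumann correspondence, then invoke the comparability of the energies $\calE_T$ and $\calE_{p,\theta}$ to read off the bound in the Besov seminorm.

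First I would invoke Butler's hyperbolic filling theorem, quoted at the beginning of Section \ref{sec:construct-fractLap}, to produce a uniform domain $\Omega$ in a metric measure space $(\overline{\Omega},d,\mu)$ satisfying (H0),~(H1),~(H2) with $\Theta=p(1-\theta)$, whose boundary is isometrically identified with $Z$. Since $f\in L^{p'}(Z,\nu_J)\cap L^{p'}(Z,\nu)$ with $\int_Z f\,d\nu=0$, the main existence Theorem~\ref{intro-thm:Neumann} yields a solution $u\in D^{1,p}(\Omega)$ of the Neumann problem on $\Omega$ with data $f$. I would then set $u_f:=Tu\in HB^\theta_{p,p}(Z)$, using the trace Theorem~\ref{thm-trace-local-est} to guarantee membership in the homogeneous Besov class.

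Next I would identify $u_f$ as the desired weak solution of the nonlocal PDE. By Proposition~\ref{trace-neumann} the trace of any Neumann solution is a minimizer of the functional $I_f$ on $HB^\theta_{p,p}(Z)$ and satisfies
\[
\calE_T(u_f,\pip)=\int_Z \pip\, f\, d\nu \qquad \text{for every } \pip\in HB^\theta_{p,p}(Z),
\]
which is exactly the identity required in the statement. This part is essentially a direct application of already-established results, so no new work is needed here.

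For the energy bound, I would first observe that since $u$ is $p$-harmonic in $\Omega$ with trace $u_f$, the function $\widehat{u_f}$ appearing in Definition~\ref{def:E-sub-T} coincides (up to an additive constant, immaterial for the form) with $u$; moreover, testing with $Eu_f$ against the $p$-harmonic $\widehat{u_f}=u$ can be replaced by testing with $u$ itself since $T(u-Eu_f)=0$. Therefore
\[
\calE_T(u_f,u_f)=\int_\Omega |\nabla u|^p\,d\mu.
\]
Lemma~\ref{estimate wrt f} (the a-priori bound for the Neumann problem) then yields
\[
\int_\Omega |\nabla u|^p\,d\mu \le C\int_Z |f(y)|^{p'} J(x_0,y)\,d\nu(y),
\]
and composing with the comparability $\calE_{p,\theta}(u_f,u_f)\le C\,\calE_T(u_f,u_f)$ from Lemma~\ref{lem:ETvsE} closes the chain. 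The main technical input is not new: it is the a-priori estimate of Lemma~\ref{estimate wrt f}, which in turn rests on Lemma~\ref{panacea-bis} and the weighted trace inequality. In this sense the present proposition is essentially a reformulation, on the boundary, of the existence plus stability results already obtained in Sections~\ref{Sec.Dirichlet}--7; the only genuinely new verification needed is the identification $\calE_T(u_f,u_f)=\int_\Omega|\nabla u|^p d\mu$ for the $p$-harmonic extension, which follows because $u-\widehat{u_f}\in D^{1,p}(\Omega)$ has zero trace and $\widehat{u_f}$ is a minimizer of Dirichlet energy among functions with trace $u_f$.
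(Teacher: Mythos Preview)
Your proposal is correct and follows essentially the same approach as the paper: obtain the Neumann solution $u$ on $\Omega$, set $u_f=Tu$, identify $\widehat{u_f}=u$ by uniqueness of the $p$-harmonic Dirichlet extension (Theorem~\ref{thm:unbounded-Dirichlet}), and then chain Lemma~\ref{estimate wrt f} with Lemma~\ref{lem:ETvsE}. The only cosmetic difference is that you route the weak formulation through Proposition~\ref{trace-neumann} and Theorem~\ref{intro-thm:Neumann}, whereas the paper invokes Proposition~\ref{lemma:convergence} and verifies the identity $\calE_T(u_f,v)=\int_Z v f\,d\nu$ directly; the content is the same.
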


\begin{proof}
Invoking Proposition \ref{lemma:convergence} we denote by $u_f$ the unique solution to the Neumann problem for the 
Cheeger $p$-Laplacian in $\Omega$ with data $f$.
In view of the $p$-harmonicity of $u_f$, and the uniqueness for the Dirichlet problem, it follows that $\hat{u_f}=u_f$ and 
\[
\calE_T(u_f,v)=\int_\Om |\nabla u_f|^{p-2}\nabla u_f\cdot\nabla E v\, d\mu=\int_Z v\, f\, d\nu,
\]
for any $v\in HB^\theta_{p,p}(Z)$.
Moreover, by Lemma~\ref{lem:ETvsE},
\[
\calE_{p,\theta}(u_f,u_f)\le C\, \calE_T(u_f,u_f)=C\, \int_\Om|\nabla u_f|^p\, d\mu.
\]
Combining the above with Lemma~\ref{estimate wrt f} yields the desired inequality
$\calE_{p,\theta}(u_f,u_f)\le C\, \int_Z|f(y)|^{p'} \ J(x_0,y)\, d\nu(y)$.

\end{proof}

Proposition \ref{thm:solving-fract-laplace} proves part (1) of Theorem \ref{intro-thm:Neumann}.
In the same spirit, and following the approach in \cite{CKKSS}, the remaining properties  for the solutions of the nonlocal equation
$$(-\Delta)^\theta u=f$$
in $Z$ can be inferred from the analogue results for solutions of the Neumann problem for the $p$-Laplacian in $(\Omega, d, \mu)$. 
Hence part (2) in Theorem \ref{intro-thm:Neumann} follows immediately from Theorem \ref{stability}, while the 
boundary H\"older regularity, and the Harnack inequality follow from the analogue results for the $p$-Laplacian 
proved in \cite{CKKSS} (see Remark~\ref{stability})

\begin{remark}
It is natural here to ask why the measure $\nu_J$ as in~\eqref{intro-weight-h} is needed at all. We argue here that 
this measure $\nu_J$ arises naturally. As pointed out in~\cite{GS3}, we know that given a $p$-harmonic extension $\widehat{u}$
to $\Om$ of a function $u\in HB^\theta_{p,p}(Z)$ with $Z=\partial\Om$, the linear map $T_u:HB^\theta_{p,p}(Z)\to\R$ given by
\[
T_u(v):=\int_\Om|\nabla \widehat{u}|^{p-2}\, \nabla\widehat{u}\cdot\nabla\widehat{v}\, d\mu
\]
is a bounded map in the energy seminorm of $HB^\theta_{p,p}(Z)$. On the other hand, if $u$ arose as the solution to the 
problem posed in Definition~\ref{def: fractional p-laplacian}, then we also have that
$T_u(v)=\int_Z f\, v\, d\nu$. Thus we must have that
\[
|T_u(v)|\le C_f\, \mathcal{E}_{p,\theta}(v)^{1/p}.
\]
Note that $\int_Zf\, d\nu=0$. 
We now try to find a bound on $C_f$ as follows. 
We fix a point $x_0\in Z$ and consider $x\in B(x_0,R)$ for some fixed $R>0$.
When $v\in HB^\theta_{p,p}(Z)$,  we have
\begin{align*}
\int_Z&f(y)v(y)\, d\nu(y)=\int_Zf(y)\, [v(y)-v(x)]\, d\nu(y)\\
  &=\int_Z\, \left[f(y)\, \nu(B(x,d(x,y)))^{1/p}\, d(x,y)^\theta\right] \left[\frac{v(y)-v(x)}{\left[\nu(B(x,d(x,y)))^{1/p}\, d(x,y)^\theta\right]}\right]
     d\nu(y).
\end{align*}
It follows from H\"older's inequality that 
\begin{align*}
\nu&(B(x_0,R)) \Bigg\vert\int_Zf(y)v(y)\, d\nu(y)\Bigg\vert\\
\le&\,\, \mathcal{E}_{p,\theta}(v)^{\frac{1}{p}}\, \left(\int\limits_{B(x_0,R)}\int\limits_Z|f(y)|^{p'}\, \nu(B(x,d(x,y)))^{\frac{p'}{p}}\, d(x,y)^{\theta p'} d\nu(y)\, d\nu(x)\right)^{\frac{1}{p'}},
\end{align*}
that is, 
\[
|T_u(v)|\le \frac{\mathcal{E}_{p,\theta}(v)^{\frac{1}{p}}}{\nu(B(x_0,R))}\left(\int\limits_{B(x_0,R)}\int\limits_Z|f(y)|^{p'}\, J(y,x) \,d\nu(y)\, d\nu(x)\right)^{\frac{1}{p'}},
\]
where $J$ is as in~\eqref{intro-weight}.
It follows that
\[
C_f\le \frac{1}{\nu(B(x_0,R))}\left(\int\limits_{B(x_0,R)}\int\limits_Z|f(y)|^{p'}\, J(y,x) \,d\nu(y)\, d\nu(x)\right)^{\frac{1}{p'}}.
\]
\end{remark}

\section{Concluding remarks, motivations, and examples}\label{S:examples}

Roughly speaking, in this paper we approach the study of fractional powers of $p$-Laplacian operators  from the point of view that the non-local energies in a metric measure space 
$(Z,d,\nu)$ arise as  the \emph{trace of a local energy in a larger metric measure space}, as in problems
related to understanding behaviors of electric potentials on the surface of a physical or biological object, when such a surface behavior 
is determined by the physical and chemical processes going on within the interior of the object itself. 
This point of view can be directly adopted whenever $Z$ is the boundary of a uniform space $\Omega$ supporting a Poincar\'e inequality
(as in the case of \cite{CS} for instance). However, in some
situations, one only has the metric measure space $(Z,d,\nu)$ as data, without $Z$ being identified as the boundary of a uniform domain.
When this is the case, depending on how rich the metric measure structure on $Z$ is, there are several ways to construct 
a uniform domain $\Omega$ supporting a Poincar\'e inequality and that has $Z$ as a boundary. In the most general 
setting of the present paper, when $Z$ is only doubling, we obtain the domain $\Om$ as a uniformization of one of its 
hyperbolic fillings, see for instance~\cite{BBS}.

In this case, $Z$ has a natural identification with the boundary of $\Om$ via a bi-Lipschitz map. In the case that $(Z,d,\nu)$ also
satisfies a $p$-Poincar\'e inequality, we can consider $\Om:=Z\times(0,\infty)$, with $\partial\Om=Z\times\{0\}$ isometric to $Z$,
and this approach was taken in~\cite{EbGKSS} for the case $p=2$ and in~\cite[Appendix]{CKKSS} for $1<p<\infty$.

In this section we compare our approach and our results with the existing literature on fractional $p$-Laplacian type operators. 
This extant body of work   has been mostly developed in the Euclidean setting, although some authors (see for 
instance \cite{AMRT,FF, Gar,PaPi} and 
references therein) have tackled the sub-Riemannian  setting. We show in Remark 9.5 and Example 9.9 how our approach is related to the class 
of variable coefficient nonlocal $p$-Laplacian operators as described in \cite{Caff} and developed in  \cite{BBK,CKP,GZZ,KKL}. We 
also argue how potentially it could greatly simplify the proofs of the existing results (we use as an example the nonlocal Harnack inequality), 
furthermore providing a unifying framework for a broad variety of geometric settings. 

\medskip

To do so, we will proceed through a series of settings, where the structure of the underlying space $(Z,d,\nu)$ becomes richer and richer, 
from doubling metric measure spaces, to doubling spaces supporting a  Poincar\'e inequality, all the way 
to the sub-Riemannian Heisenberg group and the Euclidean spaces. As more structure is added we show how the results can be refined, and how they relate to the existing literature.

\begin{subsection}{Literature review and comparisons when $Z$ is doubling and supports a Poincar\'e inequality}\label{PI section}
In the special case when $(Z, d, \nu)$ itself supports a $p$-Poincar\'e inequality, 
then following the approach in \cite{EbGKSS}, we 
{ view $Z$ as the boundary of the domain $\Om:=Z\times(0,\infty)$, with $\overline{\Om}=X=Z\times[0,\infty)$,}
endowed with the product distance $d_X$ and the measure $d\mu_a= |y|^a \,dy\, d\nu$ where $a=1- p\theta$.
This space $(X,d_X,\mu_a)$ is also a doubling metric measure space supporting a $p$-Poincar\'e inequality, 
as established in~\cite{EbGKSS}, and it is also a uniform 
space (see~\cite[Proposition 4.1]{EbGKSS}). This space satisfies (without need of any further uniformization) the 
hypotheses (H0), (H1) and (H2), and in addition one has that $\partial X$, with the metric induced by $d_X$, is indeed 
isometric (and not just bi-Lipschitz equivalent) to the original metric space $(Z,d_Z)$.

To give a concrete example, in the case 
in which $Z=\R^n$, $d$ is the Euclidean distance and $\nu$ is the Lebegue measure, then we obtain an extension of the 
fractional $p$-Laplacian on $\R^n$ through the Neumann problem  in $(x,y)\in \R^n \times \R^+$,
\begin{equation}\label{eqn91}
\begin{cases}
\text{div}\bigg( |\nabla u(x,y)|^{p-2} y^a \nabla u(x,y)\bigg) =0 \text{ for } y>0 \text{ and } x\in \R^n\\
\lim_{y\to 0} |\nabla u(x,y)|^{p-2} y^a \partial_y u (x,y) = f(x) \text{ at }x\in \R^n,
\end{cases}
\end{equation}
where $\text{div}$ and 
$\nabla$ refers to the usual differential structure in the Euclidean domain $\R^n\times(0,\infty)$
endowed with Lebesgue measure. The equation \eqref{eqn91} is reminiscent of the point of view in \cite{SiVa}.
If $u\in D^{1,p}(\R^n\times \R)$ is a solution of this Neumann problem, then its trace $Tu$  
on 
the boundary $\R^n=\partial(\R^n\times(0,\infty))$ 
satisfies the fractional partial differential equation
\[
(-\Delta_p)^{\theta} Tu=f.
\]
Here $\theta=\tfrac{1-a}{2}$ with $-1<a<1$.
Our results show that both problems have  unique (modulo additive constants) solutions when $f\in L^{p'}(Z,  \nu_J)$, in 
addition to regularity and stability properties.

Note that the energy form we use on the Besov class $B^\theta_{p,p}(Z)$ is $\mathcal{E}_T$, which in general is 
equivalent (see Lemma \ref{lem:ETvsE}) but  distinct from
the energy form $\mathcal{E}_{p,\theta}$.
Although we do not know whether an equality between $\mathcal{E}_T$ and $\mathcal{E}_{p,\theta}$  may hold with special 
choices of Cheeger differential structures on $Z$,  
we do know that   given a choice of Cheeger differential structure on $Z$ itself, 
the results in \cite{EbGKSS} and \cite[Section 7]{CKKSS}  show that when $p=2$ the fractional 
operator we construct through the approach in this paper coincides with $(-\Delta_C)^\theta$ defined through the 
classical spectral decomposition, where $\Delta_C$ is the Cheeger-Laplacian on $Z$ obtained using the theory of
Dirichlet forms~\cite{FOT}. When $Z=\R^n$, $p=2$,
and the Cheeger differential structure is the classical Euclidean differential structure on $Z$, then the two energy forms
coincide, as shown in~\cite[Section 3.2]{CS}. Likewise, when $p=2$ our results also include those proved 
in~\cite{FF} in the context of sub-Riemannian Carnot groups.
\end{subsection}

\begin{subsection}{Literature review and comparisons when $Z$ is the Euclidean space or the Heisenberg group}\label{euclidean}
In the case $p\neq 2$, the fractional powers of the $p$-Laplacian in the Euclidean, Riemannian and sub-Riemannian 
Heisenberg group setting have been 
studied in depth in recent years (see for instance \cite{BBK,BLS, CKP, dTGCV, GZZ,IMS, KKL, PaPi}
and the references therein). 
	 The version of fractional $p$-harmonic functions as considered in those
papers is different from ours, as these works consider  functions that minimize $HB^\theta_{p,p}(Z)\ni u\mapsto\mathcal{E}_{\theta,p}(u,u)-p\int_Z u\, f\, d\mu$,
with many focusing on the homogeneous case $f=0$.
As far as we are aware, with the exception of \cite{SiVa}, there does not seem 
to be published literature on an analogue of the Caffarelli-Silvestre approach in the nonlinear setting $p\ne 2$,  
even in the Euclidean setting. In 
our work, by substituting the fractional energy $\mathcal{E}_{p,\theta}$ with the equivalent one $\mathcal{E}_{T}$, and 
studying its critical points, we are able to carry through the whole Caffarelli-Silvestre approach in the more general setting 
of doubling metric measure spaces. To further compare the two approaches, we note that both the Euclidean 
space and the sub-Riemannian Heisenberg groups are spaces 
that support a Poincar\'e inequality
when endowed with the Lebesgue measure.  
So, as described in Subsection~\ref{PI section}, we 
can then apply our construction with $Z$ being a bounded open $U$  
set in either of these  spaces. We can take as uniform domain  $\Om$ the uniformization of the product 
$\Om=Z\times (0, \infty)$, as in \cite[Section 7]{CKKSS} and \cite{EbGKSS}.  Another viable alternative is to 
choose as $Z$ the whole Euclidean space $\R^n$ or a sub-Riemannian Heisenberg group. Then the 
product $Z\times (0, \infty)$ will satisfy all the hypotheses (H0),(H1),and (H2) in the present paper without 
need of a uniformization. 
Note that the weak solutions to the Euclidean problem (as in, for instance \cite{CKP}) or its Heisenberg 
group counterpart (as in \cite{PaPi}) are minimizers of the functional
  $\mathcal{E}_{p,\theta} (u,u)$.  Invoking Lemma \ref{lem:ETvsE}, and the energy minimization of 
  $p$-harmonic functions,  we have that such functions are global quasiminimizers of the functional 
 \[
 \mathcal{E}_T (u,u)= \int_\Om |\nabla \widehat{u}|^p d\mu,
 \]
 i.e. there exists a constant $C>0$ such that for all  $v\in N^{1,p}(\Om)$ one has
 $$
\mathcal{E}_T (u,u)\le C\int_\Om |\nabla (\widehat{u}+v)|^p d\mu.
 $$
 As mentioned above, in the case $p=2$ the two energies  $\mathcal{E}_{p,\theta} (u,u)$ and $ \mathcal{E}_T (u,u)$ coincide and consequently $u$ is a global minimizer of $\mathcal{E}_T (u,u)$ (and thus also a local minimizer). For general $p\neq 2$, at this time {\it we do not know whether $\hat u$ is also a local quasiminimizer}, i.e. if there exists $C>0$ such that for all open subsets  $U\subset  \bar U\subset \Omega$ and $v$ with zero trace in $U$, such that one has
 $$\int_U |\nabla \widehat{u}|^p d\mu \le C  \int_U |\nabla (\widehat{u}+v)|^p d\mu.$$

 Next, we show how {\it    if the latter holds} then we can apply  our method to prove  the Harnack inequality in \cite{CKP}  for weak   solutions of 
 \begin{equation}\label{CKP}
\begin{cases} (-\Delta_p)^\theta u=0  &\text{ in } U \\ u=g & \text{ in } \R^n\setminus U
\end{cases}
\end{equation} 
where $U$ is a bounded open subset of $\R^n$, $u>0$ in $\R^n$ (i.e. $u$ has vanishing tails)
 and $(-\Delta_p)^\theta$ is defined through the Besov energy norm $\mathcal{E}_{p,\theta}$.
  From the local quasiminimization property one could  conclude from~\cite{KiSh} that $\widehat{u}$ is locally H\"older continuous and therefore
so is $u$. In view of the fact that $u$ has finite energy, then we could invoke the minimum principle~\cite[Corollary 6.4]{KiSh} 
  which implies that $\hat u>0$ in $X$.  
We also recall that thanks to \cite[Proposition 7.1]{AS} one has that the space $\Om \cup B(x_0,R)$ is also a doubling space that supports a Poincar\'e inequality.  
Invoking~\cite[Corollary~7.3]{KiSh}, one has that $\widehat{u}$
satisfies a Harnack inequality on all balls $B$ such that $4B\subset \Om\cup B(x_0,R)$.  In particular, for each such ball one has
\[
\sup_{B\cap B(x_0,R)} u \le \sup_{B} \widehat{u} \le C \inf_B \widehat{u} \le C\inf_{B\cap B(x_0,R)} u,
\]
yielding the Harnack inequality. The same argument would yield the Harnack inequality  in \cite{PaPi} in the case where there are no tails.
\end{subsection}

\begin{subsection}{Literature review and comparisons for non-homogeneous problems}
Both the papers~\cite{CKP14} and~\cite{PaPi} deal with the homogeneous equation $(-\Delta_p)^\theta u=0$. In  their context, it is also possible to
make sense of the inhomogeneous equation
$(\Delta_p)^\theta u=f$. In the 
Euclidean setting, the inhomogeneous problem for the fractional $p$-Laplacian and its parabolic counterpart were studied in the 
paper~\cite{DZZ}. They show that when
the inhomogeneity data $f$ is bounded and $p>2$, then the solution is H\"older continuous, see~\cite[Theorem~3]{DZZ}.
In~\cite{IMS} a similar study was undertaken for the (time-independent) equation in $C^{1,1}$-domains in Euclidean spaces, 
with the solution required to vanish in the complement of the domain. Here again the inhomogeneity data was assumed to be 
bounded, and global H\"older continuity estimates were established for the solutions, see~\cite[Theorem~1.1]{IMS}.
For $p=2$ a  more general situation of Lipschitz domains in Euclidean spaces, with inhomogeneity function that also belongs to the 
dual of the Besov space, was considered in~\cite[Theorem~1.2]{CSt}. The limitations on the inhomogeneity data placed
in~\cite[Theorem~1.2]{CSt} agrees with those given in Theorem~\ref{intro-thm:Neumann-regularity} of the present
paper,
for there $p=2$, $1-\Theta/p=s$,
$\Theta=1$ and $Q_\mu=n+1$ where $n$ is the dimension of the Euclidean space in which the domain is located.
The papers~\cite{CSt,DZZ,IMS} involve operators defined through the form $\mathcal{E}_{p,\theta}$.

In the
metric setting considered in the present paper, we are able to consider a  more general class of  inhomogeneous data. In fact, we  prove
Harnack inequalities and local H\"older continuity of solutions for higher integrability classes of inhomogeneous  
data $f$, which are not necessarily bounded. We emphasize that however our results deal with   the
operator that corresponds to $\mathcal{E}_T$ rather than the one corresponding to $\mathcal{E}_{p,\theta}$. As 
a consequence 
we cannot directly apply our theorems to infer results 
about the solutions in \cite{DZZ, IMS}. 
\end{subsection}

\begin{subsection}{On conditions related to H\"older regularity of solutions} \label{sub:Holder}
Next, we want to address the optimality of our integrability conditions on the data $f$ in the proof of the global  
H\"older regularity for solutions of the Neumann problem in Theorem \ref{intro-thm:Neumann-regularity}.
Suppose we have that $f\in L^q(B(x_0,10r_0))$ with $f\ge 0$ on this ball. 
For which ranges of $q$ can one expect to obtain H\"older regularity? We recall 
a result of M\"ak\"al\"ainen 
\cite[Theorem~1.3]{Mak}: Let $D$ be open and bounded subset of a measure metric space satisfying condition (H1) in the introduction. 
Consider a weak solution $u\in N^{1,p}_0(D)$ of the PDE
\begin{equation}\label{makalainen-pde}
\int_D |\nabla u|^{p-2} \nabla u\cdot \nabla \phi \,d\mu= \int_D \phi \,d\widehat{\nu},
\end{equation}
for any $\phi\in N^{1,p}_0(D)$ and where $\widehat{\nu}\in (N^{1,p}_0(D))^*$ is a Radon measure.
If $\alpha>0$ is sufficiently small, depending on the doubling and Poincar\'e constants in (H1), 
then the solution $u$ is $\alpha$-H\"older continuous locally in $D$ 
if and only if there is a constant $M>0$ such that 
\begin{equation}\label{makalainen-condition}
\frac{\widehat{\nu}(B(x,r))}{\mu(B(x,r))} \le M r^{-p+\alpha (p-1)},
\end{equation}
for all balls $B(x,4r) \subset D$. 

In our setting, we are allowed to apply this result to the case that $D=B(x_0,10r_0)$ for some $x_0\in\partial\Omega$ 
and $r_0>0$ such that $f\geq 0$ on $B(x_0,10r_0)$, 
and $\widehat{\nu}$ the weighted measure $\nu_f$ given by $d\nu_f=f\, d\nu$. Observe that $B(x_0,10r_0)$ is open in $\overline{\Om}$. 
We want to see for which range of $q$  the Radon measure  $\nu_f$ supported on $\partial\Om$  
satisfies 
condition~\eqref{makalainen-condition}. 
Consider $x\in B(x_0,r_0)$ and $0<r<r_0$. Then, given that $f\in L^q(\partial\Om)$,
an application of
H\"older's inequality yields
\[
\nu_f(B(x,r))=\int_{B(x,r)}f\, d\nu\le \left(\int_{B(x_0,10r_0)}f^q\, d\nu\right)^{1/q}\, \nu(B(x,r))^{1/q'}.
\]
Consequently, invoking the co-dimension hypothesis \eqref{eq:Co-Dim-intro} and recalling the lower mass dimension 
from~\eqref{eq:lower-mass-exp},
\begin{align}\label{eq:tight1}
\frac{\nu_f(B(x,r))}{\mu(B(x,r))}&\le \left(\int_{B(x_0,10r_0)}f^q\, d\nu\right)^{1/q}\, \frac{\nu(B(x,r))^{1/q'}}{\mu(B(x,r))}\notag\\
 &\approx  \left(\int_{B(x_0,10r_0)}f^q\, d\nu\right)^{1/q}\, \frac{r^{-\Theta/q'}}{\mu(B(x,r))^{1/q}}\notag\\
 &\lesssim  \left[\left(\int_{B(x_0,10r_0)}f^q\, d\nu\right)^{1/q}\, \frac{r_0^{Q_\mu/q}}{\mu(B(x_0,r_0))^{1/q}}\right] r^{(-\Theta/q'-Q_\mu/q)}.
\end{align}
In order to apply \eqref{makalainen-condition}, we need to identify 
a H\"older continuity exponent $\alpha\in (0,1)$ for the solution $u$ such that 
\begin{equation}\label{ranges}
r^{-p+\alpha(p-1)}\ge r^{(-\Theta/q'-Q_\mu/q)}=r^{-(\Theta(q-1)+Q_\mu)/q}.
\end{equation}
Note that  $p>\alpha(p-1)$  as $0<\alpha\le 1$. Since we only focus on $0<r\le r_0$, without loss of generality we assume that   
$r<1$; thus, for \eqref{ranges} to hold we must have that $\alpha$ needs to satisfy
\[
p-\alpha(p-1)\ge \frac{\Theta(q-1)+Q_\mu}{q}.
\]
It follows that we must have
\[
\alpha\le \frac{pq-Q_\mu-\Theta(q-1)}{q(p-1)}.
\]
As we also want $\alpha>0$, the above estimates indicate that we need $pq-Q_\mu-\Theta(q-1)>0$, that is,
\[
q>\frac{Q_\mu-\Theta}{p-\Theta}.
\]
This is  exactly the condition we have for $q$ in Theorem~\ref{intro-thm:Neumann-regularity}(1). This argument also shows that if $f$ 
supports the growth estimate~\eqref{eq:tight1} and $u$ is H\"older continuous then, independently of the H\"older exponent, 
one must have  $q>\frac{Q_\mu-\Theta}{p-\Theta}$.

\medskip

Next we turn to the H\"older exponent. Note that through the argument above, in the regions of $\partial\Om$
where $f\ge 0$, we obtain the H\"older exponent 
\[
\eps\le \left(1-\frac{q_0}{q}\right)\, \frac{p-\Theta}{p-1},
\]
whereas in Theorem~\ref{intro-thm:Neumann-regularity}(1) we prove the H\"older regularity for a \emph{smaller} upper bound for $\eps$,
\[
\eps\le \left(1-\frac{q_0}{q}\right)\, \frac{p-\Theta}{p},
\]
 with the \emph{same} constraint on $q$ (but without the extra positivity hypothesis on $f$). Recently we were able to extend the results in  Ono~\cite{ono}, 
 who proved the analogue result for measures that can change sign in the Euclidean case, to our general setting. Hence we can improve the proof of the regularity in~\cite{CKKSS} and 
 obtain the optimal H\"older exponent also in the case when $f$ switches sign. These results will appear in a separate paper.

Euclidean predecessors of the results in M\"ak\"al\"ainen~\cite{Mak} can be found for instance in~\cite{CSt,KilZ, RakZi}.
\end{subsection}

\begin{subsection}{Literature review and comparisons for variable coefficients fractional operators}
One of the driving motivations for the study of analysis in metric space comes from the fact that  the study of weak 
solutions of  local quasilinear elliptic PDE in divergence form $\text{div}(A(x,\nabla u))=0$ modeled on the $p$-Laplacian 
(the so-called $\mathcal A-$harmonic functions in \cite{HKM}) in the Euclidean space can be reframed in the context of 
analysis in measure metric spaces by suitably choosing a new metric and a new measure so that the minimizers of the 
corresponding Dirichlet energy coincide with the solutions of these PDEs. 

In the following and in Example 9.9, we show how a  similar reframing can be done, to some extent, in the nonlocal setting through the ideas 
in this paper and in \cite{CKKSS}.

In  an effort to capture the variable coefficients versions of fractional $p$-Laplacians, in the recent Euclidean literature on 
nonlocal PDEs (see for instance \cite{BBK,CK1, CK2, CKK, CKW,CKP, KKL} and references therein)  several authors consider operators 
arising as Euler-Lagrange equations of functionals of the form
\begin{equation}\label{k-energy}
 \int_{\R^n} \int_{\R^n} K(x,y) |u(x)-u(y)|^p dx dy
 \end{equation}
where $K$ is symmetric 
and is comparable to  $|x-y|^{-(n+p\theta)},$ i.e. there is a constant $C>0$ such that
\[
C^{-1} |x-y|^{-(n+p\theta)} \le K(x,y) \le C |x-y|^{-(n+p\theta)}.
\] 
We refer the interested reader on this topic to the recent paper~\cite{KuNoSi} for further developments of the regularity theory  when $p=2$. 
In order to include this class of nonlocal operators in our framework we will need to choose $Z=\R^n$ and 
suitably modify the distance function, while keeping the Lebesgue measure $d\nu=dx$. We introduce a new 
metric $d_K$ in $\R^n$ by first defining the quasi-distance $\hat d_K (x,y) = K(x,y)^{1/(-n-p\theta )}$,
and then 
setting $d_K$ to be the corresponding  length distance. It is a simple exercise to show that $d_K$ is bi-Lipschitz 
to the Euclidean norm, and as such the metric measure space $(Z, d_K, \nu)$ is a space
supporting a Poincar\'e inequality. We also have that 
\begin{equation}\label{K-i}  \lambda^{-1} K(x,y) \le d_K (x,y)^{-(n+p\theta)} \le \lambda K(x,y)\end{equation}
for some constant $\lambda>0$. Hence the Besov spaces corresponding to $(Z, d_K, \nu)$ 
coincide with those for the Euclidean space, and minimizers of one Besov energy are quasiminimizers of the 
other. In particular, the minimizers of \eqref{k-energy} are global quasiminimizers of the $\mathcal{E}_T (u,u)$ energy.

Note that in case $K$ is already a distance raised to the power
$-(n+p\theta)$, or it is a product of a  distance raised to the power $-(n+p\theta)$ times an $A_p$ Muckenhoupt 
weight $\omega$, then the inequalities
in \eqref{K-i} become identities, once we change the measure $d\nu=\omega dx$. 
\end{subsection}

\begin{subsection}{Modeling through choices of Cheeger structures}\label{modeling}
One of the main features in our approach in defining fractional powers of $p$-Laplacians is the possibility of choosing  Cheeger differential structures. This further degree of freedom yields more flexibility than it may appear in modeling rough, possibly anisotropic
geometries.   The Cheeger differential
structure is based on first-order Taylor approximation theory rather than the theory of distributions and integration by parts.
However, what is important to us is that the energy induced by this structure is comparable to the energy obtained
from the minimal weak upper gradients theory of Newton-Sobolev spaces, together with the fact that the Cheeger gradient
energy minimization lends itself to an Euler-Lagrange equation. 
The next example gives a toy model demonstration of this paradigm. We consider the Euclidean space $\R^2$, with its Euclidean metric and Lebesgue measure, but choose an anisotropic Cheeger structure for which the corresponding $p$-Laplacian cannot be modeled through minimizing the  $\mathcal{E}_{p,\theta}$ energy even in the case $p=2$. 
\end{subsection}

\begin{example} We show through a simple example how one could capitalize on our approach (which includes the freedom to choose Cheeger structures) in order to  create  models for nonlinear diffusion in an anisotropic material, where in
some regions of the space the diffusion speed is faster in one direction.  
As a toy model, in a highly idealized situation, one could consider $Z=\R^2$, equipped with the Euclidean metric, Euclidean inner product and $2$-dimensional Lebesgue measure $\mathcal{L}^2$. For our purposes  the usual (isotropic) Euclidean differential
structure would  not work, although it is a perfectly fine choice of a Cheeger differential structure. We instead choose the following Cheeger structure: For Lipschitz functions $u$ on $X$, we set
\[
\nabla_C u(x,y):=\begin{cases} (\partial_x u(x,y), \partial_y u(x,y))&\text{ if }x>0,\\
   (2\, \partial_x u(x,y), \partial_y u(x,y))&\text{ if }x\le 0.\end{cases}
\]

For simplicity, let's first start with the linear diffusion case $p=2$. We choose the Euclidean inner product to pair with this structure as well, which in turn yields a Dirichlet form in the
sense of~\cite{FOT}, from whence we obtain the corresponding Laplacian-type operator $\Delta_C$, called 
the infinitesimal generator in~\cite{FOT}. The domain of the operator $\Delta_C$ is the collection of all
functions $u\in W^{1,2}(\R^2)=N^{1,2}(\R^2)$ for which there is a \emph{function} $f_u$ so that 
\[
\mathcal{E}(u,v):=\int_{\R^2}\nabla_Cu\,\cdot\,\nabla_Cv\, d\mathcal{L}^2=-\int_{\R^2}v\, f_u\, d\mathcal{L}^2
\]
whenever $v\in W^{1,2}(\R^2)$; in other words, $\Delta_Cu$ is defined through an operator in divergence form.
The domain of the Laplacian $\Delta_C$ associated with this differential structure is not the domain of the Euclidean Laplacian $\Delta$.
Indeed, if $u\in C^\infty(\R^2)$ is in the domain of $\Delta_C$, then for every compactly supported function $\pip\in C^\infty_c(\R^2)$
we must have that 
\begin{align*}
-\int_Z\Delta_Cu\, d\mathcal{L}^2&=\int_Z\, \nabla_Cu\cdot\nabla_C\pip\, d\mathcal{L}^2\\
  &=\int_{\R^2}\nabla u\cdot\nabla \pip\, d\mathcal{L}^2+3\int_{L\times\R}\partial_xu\, \partial_x\pip\, d\mathcal{L}^2\\
  &=-\int_{\R^2}\pip\, \Delta u\, d\mathcal{L}^2-3\int_{L\times\R}\pip\, \partial_x^2u\, d\mathcal{L}^2
  +3\int_Y\pip(0,y)\, \partial_xu(0,y)\, dy,
\end{align*}
where $L=\{x\in\R\, :\, x<0\}$ and $Y:=\{(0,y)\, :\, y\in\R\}$ is the $y$-axis in $Z$. Thus $\Delta_Cu$ is given by the Radon measure
\begin{align*}
\Delta_Cu(x,y)&\,d\mathcal{L}^2(x,y)\\
 =&-3\partial_xu(0,y)\, d\mathcal{H}^1\vert_Y(y)
+\left[3\chi_L(x)\, \partial_x^2\, u(x,y)+\Delta u(x,y)\right]\, d\mathcal{L}^2(x,y)
\end{align*}
where
$\Delta u$ is the Euclidean Laplacian of $u$.
As $u$ is in the domain of $\Delta_C$, it must follow that $\Delta_Cu$ is absolutely continuous with respect to $\mathcal{L}^2$, and
so we must have that $\partial_xu(0,y)=0$. Meanwhile, the domain of the Euclidean Laplacian $\Delta$ does not have this restriction.
Thus there are functions in the domain of $\Delta$ that are not in the domain of $\Delta_C$. 

On the other hand, the function 
$u(x,y)=\varphi(y)\,v(x)$ with $\varphi$ a compactly supported smooth function on $\R$ with $\varphi(0)=1$ and 
$v(x)=(1+3\chi_{\R\setminus L}(x))\, x$ belongs to the domain of $\Delta_C$ but not to the domain of $\Delta$.
Indeed, with $A$ the $2\times 2$ matrix given by
 \[
   A(x)=
  \left[ {\begin{array}{cc}
   1+\chi_L(x) & 0 \\
   0 & 1 \\
  \end{array} } \right],
\]
we have that \begin{equation}\label{divform} \Delta_C u(x,y)=-\text{div}\left[A(x) \nabla u(x,y)\right]\end{equation} 
when $u$ is in the domain of $\Delta_C$.
Given the particular anisotropicity of the operator $\Delta_C$, it is not possible to equip $\R^2$ with a measure $\mu_C$ so that
functions $u$ that solve the equation $(-\Delta_C)^\theta u=f$ are characterized as the minimizers of the Besov 
energy\footnote{In principle it may still be possible to find a symmetric kernel $K(x,y)$ so that  one can obtain solutions of  
the fractional powers of the divergence form operator~\eqref{divform} as minimizers of a non-local energy of type~\eqref{k-energy}.
}
\[
\mathcal{E}_C(u):=\int_{\R^2}\int_{\R^2}\frac{|u(z_1)-u(z_2)|^2}{\mu(B(z_1,|z_1-z_2|))\, |z_1-z_2|^{2\theta}}\, 
d\mathcal{L}^2(z_2)\, d\mathcal{L}^2(z_1).
\]

While fractional powers of divergence form linear operators such as 
$$L_A u= \sum_{i=1}^n \sum_{j=1}^n \partial_{x_i} \bigg( a_{ij}(x) \partial_{x_j} u\bigg),$$
with $L^\infty$, strictly positive definite, symmetric,  coefficients $a_{ij}=a_{ji}$ 
 can be defined and studied using spectral 
decomposition methods as done for instance in \cite{CSt} (see also \cite{StVa} for the non-divergence form analogue), as far as we know our approach is the only 
one in the literature that would allow  to study fractional powers of the analogue non-linear diffusion operator in $\R^n$ of the form

\begin{equation}\label{p-diffusion}
\Delta_p^g u := \frac{1}{\sqrt{g}} \sum_{i,j=1}^{n}\partial_i \bigg[ \sqrt{g} \bigg( \sum_{k,l,m,h=1}^{n}
g_{kl} g^{lm} \partial_m u g^{kh}\partial_h u
\bigg)^{\frac{p-2}{2}} g^{ij} \partial_j u
\bigg],
\end{equation}
where $g=\{ g_{ij}\}$ is an $L^\infty$ symmetric, positive definite matrix valued function, $g^{ij}$ denote the components of its inverse, and $\sqrt{g}$ denotes the determinant of $g$. 
In fact such an operator arises as the $p$-Laplacian associated to the choice of Cheeger differential structure
\[
(\nabla_C u)_i = \sum_{j=1}^ng^{ij} \partial_j u
\]
for $i=1,...,n$.  
For related results in the literature see \cite{GZZ}, for a proof of the existence of minimizers of the 
$p$-Besov energy on Riemannian manifolds, and \cite{CFS} for the study of nonlocal minimal surfaces on 
closed Riemannian manifolds.  

We conclude this section by observing that our approach at the moment does not yet allow to study fractional powers of operators such as \eqref{divform} with non symmetric coefficients. Even in the Eucludean case the current literature has focused mostly on the study of fractional operators  with symmetric kernels, such as those described in \eqref{k-energy}.  A notable exception is the work of   Kassmann and Weidner, see for instance 
\cite{KaWe1, KaWe2, We}. For a different perspective see also  \cite{GKS2}. 
\end{example}

\footnotesize	
\bigskip
	
	\noindent Address:\\
	
	\noindent L.C.: Department of Mathematical Sciences, Smith College,  Northampton, MA 01060, U.S.A.. 
		\noindent E-mail: {\tt lcapogna@smith.edu }\\
	
	\noindent R.G.: Department of Mathematics, Physics and Geology, Cape Breton University, Sydney, NS~B1Y3V3, Canada. 
	\noindent E-mail: {\tt ryan\textunderscore gibara@cbu.ca}\\
	
	\noindent R.K.: Aalto University
Department of Mathematics and Systems Analysis, P.O.~Box~11100,  FI-00076 Aalto,  Finland. 
	\noindent E-mail: {\tt riikka.korte@aalto.fi}\\
	
	\noindent N.S.: Department of Mathematical Sciences, P.O. Box 210025, University of
	Cincinnati, Cincinnati, OH 45221--0025, U.S.A.. 
	\noindent E-mail: {\tt shanmun@uc.edu} 
	
\end{document}